\let\ams@starttoc\@starttoc
\let\@starttoc\ams@starttoc
\patchcmd{\@starttoc}{\makeatletter}{\makeatletter\parskip\z@}{}{}
\newtheorem{theorem}{Theorem}[section]
\newtheorem{lemma}[theorem]{Lemma}
\newtheorem{corollary}[theorem]{Corollary}
\newtheorem{proposition}[theorem]{Proposition}
\theoremstyle{definition}
\newtheorem{example}[theorem]{Example}
\newtheorem{remark}[theorem]{Remark}
\definecolor{orchid}{RGB}{143,40,194}
\definecolor{lava}{RGB}{207,16,32}
\newcommand{\somespace}{\scalebox{1.75}{\raisebox{-.075cm}{$\phantom{I}$}}\!\!\!\!}
\font\sc=rsfs10
\newcommand{\cC}{\sc\mbox{C}\hspace{1.0pt}}
\newcommand{\cI}{\sc\mbox{I}\hspace{1.0pt}}
\newcommand{\cS}{\sc\mbox{S}\hspace{1.0pt}}
\newcommand{\cT}{\sc\mbox{T}\hspace{1.0pt}}
\newcommand{\cL}{\sc\mbox{L}\hspace{1.0pt}}
\newcommand{\cA}{\sc\mbox{A}\hspace{1.0pt}}
\newcommand{\cQ}{\sc\mbox{Q}\hspace{1.0pt}}
\font\scc=rsfs7
\newcommand{\ccC}{\scc\mbox{C}\hspace{1.0pt}}
\newcommand{\ccA}{\scc\mbox{A}\hspace{1.0pt}}
\newcommand{\ccS}{\scc\mbox{S}\hspace{1.0pt}}
\begin{document}

\title[$2$-representations via (co)algebra $1$-morphisms]{Simple transitive $2$-representations\\
via (co)algebra $1$-morphisms}

\author[M. Mackaay, V. Mazorchuk, V. Miemietz and D. Tubbenhauer]{Marco Mackaay, 
Volodymyr Mazorchuk,\\ Vanessa Miemietz and Daniel Tubbenhauer}

\vbadness=10001
\hbadness=10001
\begin{abstract}
For any fiat $2$-category $\cC$, 
we show how its simple transitive 
$2$-re\-presentations can be constructed using coalgebra $1$-morphisms in the 
injective abelianization of $\cC$.
Dually, we show that these can 
also be constructed 
using
algebra $1$-morphisms in the projective abelianization of $\cC$. 
We also extend Mo\-rita--Takeuchi theory 
to our setup and work out several 
examples, including that of Soergel bimodules for dihedral groups, explicitly.
\end{abstract}

\maketitle
\tableofcontents
\section{Introduction}\label{s0}

The subject of $2$-representation theory, which has its origins in \cite{CR,KhLa,Ro}, 
is the higher categorical 
analogue of the classical representation theory of algebras. 
A systematic study of 
the ``finite dimensional'' counterpart of $2$-representation 
theory started in \cite{MM1}, and was 
continued in \cite{MM2}--\cite{MM6}, see 
also \cite{Xa,Zh1,Zh2,MZ} and the references therein. 
In particular, for a given finitary 
$2$-category $\cC$, the paper \cite{MM5} defines an
appropriate $2$-analog of simple representations, 
the so-called {\em simple transitive}
$2$-re\-presentations, and uses these $2$-representations 
to establish a 
Jordan--H\"{o}lder theory for finitary $2$-categories. 
This motivates the problem of classifying the
simple transitive $2$-re\-presentations of a given finitary 
$2$-category $\cC$. For instance, this question was
studied, for various $2$-categories, 
in e.g. \cite{MM5,MM6,Zh2,Zi,MZ,MaMa,KMMZ,MT}.

An important example of such simple transitive $2$-representations 
is provided by the so-called
\textit{cell $2$-representations} defined in \cite{MM1,MM2} using 
combinatorics of 
$1$-mor\-phisms in $\cC$. The notion of cell $2$-representation is 
inspired by the 
Kazhdan--Lusztig cell representations of Hecke algebras of Coxeter 
groups \cite{KL0}. 
In some cases, for example for the $2$-categories of 
Soergel bimodules 
in type $A$, cell $2$-re\-presentations exhaust simple transitive 
$2$-representations, as was shown in \cite{MM5}. However, it turns 
out that, in many cases, there 
are simple transitive $2$-representations which are not equivalent 
to cell $2$-representations.
The first, very degenerate, examples already appeared in \cite{MM5}. 
However, the first interesting, 
and unexpected, example appeared in \cite{MaMa} 
which studies simple
transitive $2$-representations for some subquotients of Soergel 
bimodules in dihedral Coxeter types
$I_2(4)$ and $I_2(5)$.

The subsequent paper \cite{KMMZ} studies 
the classification of 
simple transitive $2$-re\-pre\-sen\-tations for so-called \textit{small 
quotients of Soergel bimodules} in all 
finite Weyl types. In particular, the existence of 
simple transitive 
$2$-representations which are not cell $2$-representations was 
established for 
all dihedral Coxeter types $I_2(2n)$, where $n>2$. The classification 
problem was completed for
all types with the exception of $I_2(12)$, $I_2(18)$ and 
$I_2(30)$. 

The classification of simple transitive $2$-representations 
is usually approached in two steps. The first step
addresses the classification of certain integral representations of the group algebra of
the corresponding Weyl group. As it turns out 
in \cite{KMMZ}, for dihedral types, this latter classification
is given in terms of simply laced Dynkin diagrams. 
The type $A$ Dynkin diagrams lead to 
cell $2$-representations, and the type $D$ Dynkin diagrams lead
to new simple transitive 
$2$-representations which, together with the cell $2$-representations, 
exhaust all simple
transitive $2$-representations unless the dihedral group is of type $I_2(12)$, $I_2(18)$ or $I_2(30)$. 
(These three cases correspond to type $E$ Dynkin diagrams, with 
$12$, $18$ and $30$ being the Coxeter numbers of $E_6$, $E_7$ and $E_8$).

The problem with these three exceptional types 
was that the classification of integral
representations of the group algebra of the corresponding 
Weyl group predicted the 
existence of additional ``type $E$'' simple transitive 
$2$-representations.
These additional simple transitive $2$-representations were 
constructed later in \cite{MT} (relying on ideas from \cite{KS,AT}),
using a presentation for Soergel bimodules 
given in \cite{El}. This method differs conceptually
from the one used in \cite{MM5,KMMZ} (and the
other papers mentioned above)
where $2$-representations were constructed as 
``subquotients'' of the so-called \textit{principal $2$-representations}.
At the moment, there is no universal algorithm which would allow one to move between 
the $2$-representations constructed using these two different methods.

The main motivation for the present paper 
is to develop some techniques 
to reinterpret the results of \cite{MT} in 
the framework of the approach of \cite{MM4,KMMZ}. For this 
we extend to our setup 
and further develop the ideas 
of \cite{Os,EO,ENO,EGNO} which study $2$-representations 
of certain tensor categories
using algebra objects in these tensor categories and 
the module
categories associated to these algebra objects.

An interesting example 
is given by the semisimplified quotient of 
$\mathrm{U}_q(\mathfrak{sl}_2)$-$\mathrm{mod}$, 
where $q$ is a primitive complex even root of unity. 
In this case,
as it is shown 
in \cite{KO,Os}, there 
are three families of algebra objects. 
These correspond to the simply laced Dynkin diagrams, 
just like the simple transitive $2$-representations 
of Soergel bimodules in dihedral types.

In the present paper, we show that, for a 
given fiat $2$-category $\cC$ (in the sense of \cite{MM1}), 
there is a bijection between the equivalence classes 
of simple transitive $2$-representations of $\cC$ and the Morita--Takeuchi
equivalence classes of simple coalgebra $1$-morphisms in $\underline{\cC}$, 
the injective abelianization of $\cC$, see Theorem \ref{thm7}.
Dually, we also show that there is a bijection between the equivalence classes 
of simple transitive $2$-representations of $\cC$ and the 
Morita equivalence classes of 
algebra $1$-morphisms in $\overline{\cC}$, 
the projective abelianization of $\cC$, see Corollary \ref{cor-72}. 
Here the Morita(--Takeuchi) theory for (co)algebra 
$1$-morphisms in $\cC$ 
is a direct generalization of the classical Morita(--Takeuchi) theory for 
(co)algebras \cite{Mo,Ta}, as we explain in Section \ref{section:Morita-Takeuchi}.
(This is not to be confused with the results of \cite{MM4}.)

Our results extend and 
generalize some of the results in \cite{Os,EO,ENO,EGNO}.
However, there are some essential 
difficulties due to the fact that our setup differs
from the one studied in \cite{Os,EO,ENO,EGNO}: 
For example, the latter references work mostly with  abelian
monoidal categories (with some extra structure), while the categories 
we consider are additive, but almost never abelian.
One of the manifestations of this difficulty 
is our definition of {\em internal homs} in 
Section \ref{s5.1} which is, in some sense, 
dual to the one used in \cite{EGNO}. 
Indeed, it turns out that the internal hom 
defined in \cite[Definition 7.9.2]{EGNO}  
does not have the necessary properties which would allow 
one to develop a useful theory in our setup. See also 
Remark \ref{remark:egno-vs-our-stuff}.

We also give several 
(classes of) examples. One of 
them explains the relation between 
the two aforementioned ADE classifications. 
For this we crucially rely on \cite{El}. That is, 
Elias' results show that there 
is a $2$-functor between 
the semisimplified quotient of $\mathrm{U}_q(\mathfrak{sl}_2)$-$\mathrm{mod}$, 
with $q$ as above, and the small quotient of the $2$-category of singular Soergel 
bimodules of dihedral type. We show that this $2$-functor gives the link between the 
two ADE classifications, using 
our relation between simple transitive $2$-representations 
and algebra $1$-morphisms, see Section \ref{s8}. 

\begin{remark}\label{remark:upshots}
It is worth emphasizing that the 
approach using (co)algebra $1$-mor\-phism does not 
seem to be very
helpful for the classification of $2$-representations, 
because the  classification of (co)algebra $1$-morphisms looks
like a very hard problem in general. However, this method is quite 
helpful if one would like to check existence
of some $2$-re\-presentations, since this can be 
reformulated into the problem of 
checking that certain $1$-mor\-phisms have an additional 
structure of a (co)algebra $1$-mor\-phism. 
This is sometimes quite easy, e.g. the type A and D algebra $1$-morphisms for 
the semisimplified quotient of $\mathrm{U}_q(\mathfrak{sl}_2)$-$\mathrm{mod}$ 
decategorify to idempotents in the Grothendieck group -- and 
this basically fixes the algebra structure, cf. Remark \ref{remark:typeADE-2}.
\end{remark}

The paper is organized as follows: Section \ref{s1} 
contains preliminaries on $2$-rep\-re\-sen\-ta\-ti\-ons.
Section \ref{s4} introduces a new version of 
abelianization for finitary $2$-categories and
compares it to the previous versions defined in \cite{MM1}. 
This new abelianization is essential in 
the rest of the paper as it significantly simplifies 
arguments related to abelianization of $2$-representations.
Section \ref{s5} contains our main results
mentioned above. Section \ref{section:Morita-Takeuchi}
establishes an analogue of Morita(--Takeuchi) theory in our 
setup. Finally, Sections \ref{s7} and \ref{s8}
deal with some explicit examples and applications, which 
include $2$-categories of projective
functors for finite dimensional algebras and 
$2$-categories of Soergel bimodules 
of dihedral type.
\vspace{0.5cm}
 
\textbf{Acknowledgements:} The second 
author is partially supported by
the Swedish Research Council, Knut and 
Alice Wallenberg Stiftelse and G{\"o}ran Gustafsson Stiftelse.
A part of this paper was written during the visit of 
the third author to Uppsala University
in September 2016. This visit was supported by the 
Swedish Research Council. Both, this support and
the hospitality of Uppsala University, are gratefully acknowledged.
The fourth author thanks the 
Hausdorff Center for Mathematics (HCM) in Bonn
for partially sponsoring a research visit during this project, 
as well as Darkness, his old friend.
We would also like to thank Ben Elias for stimulating discussions.
We thank the referee for an extremely careful reading of the manuscript, 
for many very helpful comments.

\section{Some recollections of \texorpdfstring{$2$}{2}-representation theory}\label{s1}

\subsection{Basic notation and conventions}\label{s1.1}

We fix an algebraically closed field $\mathbbm{k}$. All of our 
($2$-)categories and ($2$-)functors etc.\
will be $\mathbbm{k}$-linear unless stated otherwise.

A {\em $2$-category} is a category enriched over 
the category of all (small) categories. 
That is, a $2$-category $\cC$ consists of a collection 
of objects denoted by $\mathtt{i},\mathtt{j},\mathtt{k}$ etc.;
for each pair $(\mathtt{i},\mathtt{j})$ of objects, a 
small category $\cC(\mathtt{i},\mathtt{j})$ consisting of a 
set of $1$-morphisms,
whose elements will be denoted by 
$\mathrm{F},\mathrm{G},\mathrm{H}$ etc., and, for each pair $(\mathrm{F},\mathrm{G})$ of 
$1$-morphisms in a fixed $\cC(\mathtt{i},\mathtt{j})$, a set 
$\mathrm{Hom}_{\ccC(\mathtt{i},\mathtt{j})}(\mathrm{F},\mathrm{G})$ 
of $2$-morphisms 
(we also write $\mathrm{Hom}_{\ccC}(\mathrm{F},\mathrm{G})$ etc.\
for short), whose elements we call 
$\alpha,\beta,\gamma$ etc. For any $\mathtt{i}\in \cC$, 
we denote by $\mathbbm{1}_{\mathtt{i}}$ the identity 
$1$-mor\-phism in $\cC(\mathtt{i},\mathtt{i})$.
For a $1$-morphism $\mathrm{F}$, we 
write $\mathrm{id}_{\mathrm{F}}$ for the corresponding 
identity $2$-mor\-phism
in $\mathrm{Hom}_{\ccC(\mathtt{i},\mathtt{j})}(\mathrm{F},\mathrm{F})$.
Moreover, we use the symbol $\circ$ for composition of $1$-morphisms (but often omit it), 
$\circ_0$ for horizontal composition
and $\circ_1$ for vertical composition of $2$-morphisms.
(For more background on abstract $2$-categories, see e.g. \cite{ML,Le}.)

The $2$-category $\cC$ is called \textit{$\mathbbm{k}$-linear} 
if $\mathrm{Hom}_{\ccC(\mathtt{i},\mathtt{j})}(\mathrm{F},\mathrm{G})$ is a 
$\mathbbm{k}$-vector space, for all $(\mathrm{F},\mathrm{G})$, and if 
horizontal and vertical compositions are $\mathbbm{k}$-bilinear operations. 

Moreover, we will also meet the notion of a \textit{bicategory}. 
We do not need bicategories often in this paper and refer the reader 
to \cite{Be,ML,Le} for details. The example to keep in mind are 
(non-strict) monoidal categories.
The most important, for us,  fact to recall about bicategories 
is that they can always be \textit{strictified}: any bicategory is weakly 
equivalent to a $2$-category, 
see e.g. \cite{Be} or \cite[Theorem 2.3]{Le}.

\subsection{Finitary categories and \texorpdfstring{$2$}{2}-categories}\label{s1.2}

An additive $\mathbbm{k}$-linear category 
is called {\em finitary} if it has split idempotents, only finitely 
many isomorphism classes of indecomposable 
objects and the morphism sets are finite dimensional 
$\mathbbm{k}$-vec\-tor spaces. We write 
$\mathfrak{A}^f_{\mathbbm{k}}$ for the $2$-category which has
\begin{itemize}
\item finitary additive $\mathbbm{k}$-linear categories as objects;
\item  $\mathbbm{k}$-linear (hence, additive) functors as $1$-morphisms;
\item natural transformations of functors as $2$-morphisms.
\end{itemize}

Then we say a $2$-category $\cC$ is {\em finitary} provided
\begin{itemize}
\item it only has finitely many objects;
\item for each pair $(\mathtt{i},\mathtt{j})$ of objects, 
the category $\cC(\mathtt{i},\mathtt{j})$ is in $\mathfrak{A}_{\mathbbm{k}}^f$;
\item horizontal composition is additive and $\mathbbm{k}$-linear;
\item the identity $1$-morphism 
$\mathbbm{1}_{\mathtt{i}}$ is indecomposable for every $\mathtt{i}\in\cC$.
\end{itemize}

\subsection{Fiat \texorpdfstring{$2$}{2}-categories}\label{s1.5}

For any $2$-category $\cC$, we consider the $2$-category $\cC^{\,\mathrm{co,op}}$, which is obtained from 
$\cC$ by reversing both $1$- and $2$-morphisms. 

We say that a finitary $2$-category $\cC$ is {\em weakly fiat} if 
$\cC$ is endowed with a weak equivalence ${}^{*}\colon\cC\to \cC^{\,\mathrm{co,op}}$ such that,
for any pair $(\mathtt{i}, \mathtt{j})$ of objects and every $1$-mor\-phism
$\mathrm{F}\in\cC(\mathtt{i},\mathtt{j})$, there are
$2$-mor\-phisms $\alpha\colon\mathrm{F}\circ\mathrm{F}^*\to
\mathbbm{1}_{\mathtt{j}}$ and $\beta\colon\mathbbm{1}_{\mathtt{i}}\to
\mathrm{F}^*\circ\mathrm{F}$ satisfying
$\alpha_{\mathrm{F}}\circ_1\mathrm{F}(\beta)=\mathrm{id}_{\mathrm{F}}$ and
$\mathrm{F}^*(\alpha)\circ_1\beta_{\mathrm{F}^*}=\mathrm{id}_{\mathrm{F}^*}$. Note that $\alpha$ 
and $\beta$ define an adjunction between $F$ and $F^\ast$. We therefore call them 
{\em adjunction $2$-morphisms}. 

We denote by ${}^*\mathrm{F}$ the image of a $1$-morphism $\mathrm{F}$ under an inverse to ${}^*$.

If ${}^{*}$ is a weak involution, we say that $\cC$ is {\em fiat}. 
(We refer the reader to 
\cite{MM1,MM2,MM6} for details about (weakly) fiat categories.)

\subsection{\texorpdfstring{$2$}{2}-representations}\label{s1.3}

Let $\cC$ be a finitary $2$-category. By a {\em $2$-representation} 
of $\cC$ we mean a strict 
$2$-functor from $\cC$ to the $2$-category of (small) categories. 

Such a 
$2$-representation is called a {\em finitary $2$-representation} if it is 
a strict $2$-functor from $\cC$ to $\mathfrak{A}_{\mathbbm{k}}^f$. 
We usually denote $2$-representations 
by $\mathbf{M},\mathbf{N},\dots$. For any fixed $\mathtt{i}\in\cC$, we 
use the symbol $\mathbf{P}_{\mathtt{i}}$ for the
$\mathtt{i}$-th {\em principal} $2$-representation $\cC(\mathtt{i},{}_-)$. 

All finitary $2$-representations of $\cC$ form a $2$-category
whose $1$-morphisms are $2$-natural transformations and whose 
$2$-morphisms are modifications 
(more details are given in \cite{Le,MM3}).
Moreover, we say that two $2$-representations 
$\mathbf{M}$ and $\mathbf{N}$ of $\cC$ are {\em equivalent}, if there exists 
a $2$-natural transformation $\Phi\colon\mathbf{M}\to\mathbf{N}$ which induces 
an equivalence of categories, for each object $\mathtt{i}$.

\subsection{\texorpdfstring{$2$}{2}-ideals}\label{s1.6}

A semicategory is a collection of objects and morphisms satisfying the axioms 
of a category except for the existence of identity morphisms. Similarly, a 
$2$-semicategory is a category enriched over semicategories.

Given any $2$-category $\cC$, a {\em left $2$-ideal} $\cI$ of $\cC$ is a $2$-semicategory, 
which has the same objects as $\cC$ and in which, for each pair $(\mathtt{i},\mathtt{j})$ 
of objects, $\cI(\mathtt{i},\mathtt{j})$ is an ideal in  $\cC(\mathtt{i},\mathtt{j})$, 
closed under left horizontal multiplication with both $1$- and $2$-morphisms in $\cC$. 
Similarly, one defines {\em right $2$-ideals} and {\em two-sided $2$-ideals} 
(which we also just call {\em $2$-ideals}).
An important class of left $2$-ideals in $\cC$ is given by 
the $\mathtt{i}$-th principal $2$-re\-presentations $\mathbf{P}_{\mathtt{i}}$.

If $\mathbf{M}$ is a $2$-representation 
$\cC$, an {\em ideal}
$\mathbf{I}$ in $\mathbf{M}$ is the data of 
an ideal $\mathbf{I}(\mathtt{i})$ in 
$\mathbf{M}(\mathtt{i})$, for each $\mathtt{i}\in\cC$, which is stable under the action of $\cC$.

\subsection{Simple transitive \texorpdfstring{$2$}{2}-representations}\label{s2.4}

Let $\cC$ be a finitary $2$-category. We call a 
finitary $2$-representation $\mathbf{M}$ of $\cC$ 
{\em transitive} if, for every object $\mathtt{i}\in\cC$ 
and every non-zero object $X\in\mathbf{M}(\mathtt{i})$, 
the $2$-sub\-representation $\mathbf{G}_{\mathbf{M}}(X)$ of 
$\mathbf{M}$ is equivalent to $\mathbf{M}$. Here $\mathbf{G}_{\mathbf{M}}(X)$ 
is the additive closure $\mathrm{add}(\{\mathbf{M}(\mathrm{F})X\})$, where $\mathrm{F}$ 
runs over all $1$-morphisms of $\cC$. In what follows we will often
use the module (action) notation $\mathrm{F}\, X$ instead of the 
representation notation $\mathbf{M}(\mathrm{F})X$. 

A  transitive $2$-representation $\mathbf{M}$ has a 
unique maximal ideal $\mathbf{I}$ not containing
 any identity morphisms other than that of the zero 
 object (cf. \cite[Lemma 4]{MM5}). If $\mathbf{I}=0$,
then $\mathbf{M}$ is said to be {\em simple transitive}. 
In general,  the quotient  $\widehat{\mathbf{M}}$ 
of $\mathbf{M}$ by $\mathbf{I}$ is simple transitive and 
is called the {\em simple transitive quotient} of $\mathbf{M}$. 

%
%

\subsection{Combinatorics of \texorpdfstring{$1$}{1}-morphisms}\label{s1.4}

Recall that a multisemigroup is a 
pair consisting of a set $\mathcal{S}$ and an associative 
multivalued operation from $\mathcal{S}\times \mathcal{S}$ to the set of subsets of $\mathcal{S}$.

Let $\cC$ be a finitary $2$-category 
and denote by $\mathcal{S}(\cC)$ the set of isomorphism classes of 
indecomposable $1$-morphisms 
in $\cC$. This has the structure of a 
multisemigroup by \cite[Section 3]{MM2}, which comes equipped with 
several preorders. 

For two $1$-morphisms $\mathrm{F}$ and $\mathrm{G}$, we have
$\mathrm{G}\geq_L\mathrm{F}$ in the {\em left preorder} 
if there exists a $1$-morphism $\mathrm{H}$ such that 
$\mathrm{G}$ occurs, up to isomorphism, as a direct 
summand in $\mathrm{H}\circ \mathrm{F}$. 
An equivalence class for this preorder is called a {\em left cell}. 
Similarly, we define the {\em right} and 
{\em two-sided} preorders $\geq_R$ and $\geq_J$, and the 
corresponding {\em right} and {\em two-sided} 
cells.

Observe that $\geq_L$ defines a partial order on 
the set of left cells, and, similarly, 
$\geq_R$ and  $\geq_J$ define partial orders on the 
sets of right cells and two-sided cells.

Note that, if $\cC$ is weakly fiat, then both
$\mathrm{F}\mapsto \mathrm{F}^*$ and $\mathrm{F}\mapsto {}^*\mathrm{F}$ induce isomorphisms
between the partially ordered sets $\left(\mathcal{S}(\cC),\leq_L\right)$ and $\left(\mathcal{S}(\cC),\leq_R\right)$.

\subsection{Cell \texorpdfstring{$2$}{2}-representations}\label{s2.1}

Let $\cC$ be a finitary $2$-category.
Consider a $2$-re\-pre\-sen\-ta\-tion $\mathbf{M}$ of $\cC$ such that, 
for each object $\mathtt{i}\in\cC$, the category $\mathbf{M}(\mathtt{i})$ is additive and
idempotent complete. Let $I$ be a subset of 
objects in $\cC$. Given any collection $X_i\in \mathbf{M}(\mathtt{i}_i)$ of 
objects, where $i\in I$, define $\mathbf{G}_{\mathbf{M}}(\{X_i\mid i\in I\})$ as 
in Section \ref{s2.4}. This becomes a $2$-subrepresentation of $\mathbf{M}$ by restriction.

For any left cell $\mathcal{L}$ in $\cC$, there exists an object 
$\mathtt{i}=\mathtt{i}_{\mathcal{L}}\in\cC$ such that the domain of every $1$-morphism in $\mathcal{L}$
is $\mathtt{i}$. Therefore it makes sense to define the $2$-representation 
$\mathbf{N}=\mathbf{G}_{\mathbf{P}_{\mathtt{i}}}(\mathcal{L})$, 
which, by \cite[Lemma 3]{MM5}, has  a unique maximal ideal $\mathbf{I}$  not containing 
$\mathrm{id}_{\mathrm{F}}$, for any $\mathrm{F}\in\mathcal{L}$. 
We call the quotient $\mathbf{C}_{\mathcal{L}}=\mathbf{N}/\mathbf{I}$  
the {\em (additive) cell $2$-representation} of $\cC$ associated to $\mathcal{L}$.

\section{Several versions of abelianization}\label{s4}

\subsection{Classical abelianization}\label{s4.1}

Let $\mathcal{A}$ be a finitary category. Recall, see e.g. \cite{Fr}, that the (diagrammatic) {\em injective abelianization}  
$\underline{\underline{\mathcal{A}}}$ of $\mathcal{A}$ is defined as follows:
\begin{itemize}
\item objects of $\underline{\underline{\mathcal{A}}}$ are diagrams $X\overset{f}{\longrightarrow}Y$
over $\mathcal{A}$;
\item morphisms in $\underline{\underline{\mathcal{A}}}$ are equivalence classes of solid
commutative diagrams of the form (i.e. without the diagonal dashed arrow)
\[
\xymatrix{
X\ar[rr]^{f}\ar[d]_{g}&&Y\ar[d]^{h}\ar@{-->}[dll]_{q}\\
X'\ar[rr]_{f'}&&Y'
} 
\]
modulo the ideal generated by those diagrams for which there is a ``homotopy''
$q$ as shown by the dashed arrow  such that $g=qf$;
\item identity morphisms are given by diagrams in which both $g$ and $h$ are the identities;
\item composition is given by the vertical composition of diagrams.
\end{itemize}

The category $\underline{\underline{\mathcal{A}}}$ is abelian. In particular, 
the object $X\overset{f}{\longrightarrow}Y$ has an embedding into 
$X\overset{0}{\longrightarrow}0$, its injective hull, and this embedding is a kernel of $f$. 
Moreover, the category $\underline{\underline{\mathcal{A}}}$ is equivalent 
(here and further in similar situations: as a $\mathbbm{k}$-linear category) to the 
category of {\em right} finite dimensional $\mathcal{A}$-mo\-dules. The original 
category $\mathcal{A}$ embeds into $\underline{\underline{\mathcal{A}}}$ via
\[
X\mapsto \xymatrix{X\ar[r]&0} \qquad\text{ and }\qquad
f\colon X\to Y \mapsto\raisebox{.75cm}{
\xymatrix{
X\ar[r]\ar[d]_{f}&0\ar[d]\\
Y\ar[r]&0
}} 
\]
and this embedding induces an equivalence between $\mathcal{A}$ and the full subcategory of 
injective objects in $\underline{\underline{\mathcal{A}}}$.

The (diagrammatic) {\em projective abelianization} $\overline{\overline{\mathcal{A}}}$
is defined in the dual way, see e.g. \cite{Fr} or \cite[Section 3.1]{MM1}.

\subsection{A different version of injective abelianization}\label{s4.2}

Let $\mathcal{A}$ be a finitary category.
Now we define a slightly different version $\underline{\mathcal{A}}$ of  
the 
(diagrammatic) {\em injective abelianization} of $\mathcal{A}$ in the following way:
\begin{itemize}

\item objects in $\underline{\mathcal{A}}$ are
tuples of the form $(X,k,Y_i,f_i)_{i=1}^{\infty}$, where 
$k\in\mathbb{Z}_{\geq 0}$, $X$ and $Y_i$ are objects in $\mathcal{A}$,
and $f_i\colon X\to Y_i$ are morphisms in $\mathcal{A}$, with the additional
requirement that  $Y_i=0$ for all $i>k$;

\item morphisms in $\underline{\mathcal{A}}$ 
from $(X,k,Y_i,f_i)_{i=1}^{\infty}$ to $(X',k',Y'_i,f'_i)_{i=1}^{\infty}$
are equivalence classes of tuples $(g,h_{i,j})_{i,j=1}^{\infty}$, where 
$g\colon X\to X'$ and $h_{i,j}\colon Y_i\to Y'_j$ are morphisms in $\mathcal{A}$
such that $f'_ig=\sum_j h_{j,i}f_j$, for each $i$, modulo 
the equivalence relation given by the homotopy relation spanned by those 
tuples $(g,h_{i,j})$ for which there exist 
$\displaystyle q_i\colon Y_i\to X'$ such that $\sum_i q_if_i=g$;

\item identity morphisms are given by tuples $(g,h_{i,j})_{i,j=1}^{\infty}$
in which $h_{i,j}=0$, if $i\neq j$, and the remaining morphism are the identities;

\item composition of the tuple $(g,h_{i,j})_{i,j=1}^{\infty}$ followed by 
the tuple $(g',h'_{i,j})_{i,j=1}^{\infty}$
is defined as the tuple $(g'g,\sum_k h'_{k,j}h_{i,k})_{i,j=1}^{\infty}$.
\end{itemize}

One should think about $\underline{\mathcal{A}}$ as a 
version of $\underline{\underline{\mathcal{A}}}$
with ``multiple arrows'' from the left object to the ``multiple objects'' on the right:
\[
\xymatrix@R=2mm{
& Y_1\\
X \ar[r]|{\,f_2\,}\ar[ru]^{f_1}\ar[rd]_{f_i}& Y_2\\
        & \vdots  
}
\]
These multiple 
arrows are indexed by non-negative integers, only finitely many of these arrows go to non-zero objects
and $k$ is a fixed explicit bound saying that after it arrows must terminate at the zero objects.

The category $\underline{\mathcal{A}}$ is additive, with  $\oplus$ given by
\[
(X,k,Y_i,f_i)_{i=1}^{\infty}\oplus(X',k',Y'_i,f'_i)_{i=1}^{\infty}=
(X\oplus X',\max(k,k'),Y_i\oplus Y'_i,f_i\oplus f'_i)_{i=1}^{\infty}
\]
with the evident definition on morphisms. Further, the assignment
\[
(X,k,Y_i,f_i)_{i=1}^{\infty}
\mapsto
\xymatrix{ 
X\ar[r]^/-.1cm/{\oplus_i f_i}&\oplus_i Y_i}
\]
provides an equivalence between $\underline{\mathcal{A}}$ and $\underline{\underline{\mathcal{A}}}$
which, in particular, implies that $\underline{\mathcal{A}}$ is abelian. Indeed, an inverse equivalence is
given, for example, by sending $X\overset{f}{\longrightarrow}Y$ to $(X,1,Y_i,f_i)_{i=1}^{\infty}$,
where $Y_1=Y$, $f_1=f$, $Y_i=0$ and $f_i=0$, for $i>1$. 

The original category $\mathcal{A}$ embeds
into $\underline{\mathcal{A}}$ via
\[
X\mapsto (X,0,0,0)\quad\text{ and }\quad f\colon X\to Y\mapsto (f,0). 
\]

The main point of the definition of 
$\underline{\mathcal{A}}$ is that the ``multiple objects'' 
will give us a chance to bookkeep some explicit 
direct sum constructions in the next sections.

The (diagrammatic)
{\em projective abelianization} $\overline{\mathcal{A}}$ is defined dually.

\subsection{Abelianization of finitary \texorpdfstring{$2$}{2}-categories}\label{s4.3}

The construction presented here rectifies some 
problems with strictness pointed out in \cite[Section 3.5]{MM1}.

Let $\cC$ be a finitary $2$-ca\-te\-go\-ry.
Consider the (diagrammatic) {\em injective abelianization} 
$\underline{\cC}$ of $\cC$ defined as follows:
\begin{itemize}

\item $\underline{\cC}$ has the same objects as $\cC$;

\item $\underline{\cC}(\mathtt{i},\mathtt{j})=\underline{\cC(\mathtt{i},\mathtt{j})}$
(using the above notation for injective abelianization); 

\item composition of $1$-morphisms is defined as follows:
\[
(\mathrm{F},k,\mathrm{G}_i,\alpha_i)_{i=1}^{\infty}\circ
(\mathrm{F}',k',\mathrm{G}'_i,\alpha'_i)_{i=1}^{\infty}=
(\mathrm{F}\mathrm{F}',k+k',\mathrm{H}_i,\beta_i)_{i=1}^{\infty},
\]
where
\begin{align*}
\mathrm{H}_i &=
\begin{cases}
\mathrm{F}\circ\mathrm{G}'_i, & i=1,2,\dots,k';\\
\mathrm{G}_{i-k'}\circ \mathrm{F}',  & i=k'+1,k'+2,\dots,k'+k;\\
0,& \text{else};
\end{cases} 
\\
\beta_i &=
\begin{cases}
\mathrm{id}_{\mathrm{F}}\circ_0\alpha'_i, & i=1,2,\dots,k';\\
\alpha_{i-k'}\circ_0\mathrm{id}_{\mathrm{F}'},  & i=k'+1,k'+2,\dots,k'+k;\\
0,& \text{else}.
\end{cases} 
\end{align*}

\item identity $1$-morphisms are tuples 
$(\mathbbm{1}_{\mathtt{i}},0,0,0)$, for $\mathtt{i}\in\cC$;

\item horizontal composition of $2$-morphisms is defined component-wise.

\end{itemize}
For $\mathtt{i},\mathtt{j}\in \cC$, 
mapping $\mathrm{F}$ to $(\mathrm{F},0,0,0)$ gives 
rise to an equivalence between $\cC(\mathtt{i},\mathtt{j})$ 
and the full subcategory of 
injective objects in $\underline{\cC}(\mathtt{i},\mathtt{j})$ 
which we will 
use to identify these two $2$-categories. Note that this also 
realizes $\cC$ as a 
$2$-subcategory of $\underline{\cC}$.

The (diagrammatic)
{\em projective abelianization} $\overline{\cC}$ is defined dually.

\begin{remark}\label{remark:from-co-to-usual}
\begin{enumerate}[$($a$)$]
\item \label{remark:from-co-to-usual-1}
We note that, generally, even in the case when $\cC$ is a fiat $2$-category, neither $\underline{\cC}$ nor $\overline{\cC}$ have adjunction $2$-morphisms or even a weak involution 
(see also Remark \ref{remark:egno-vs-our-stuff}). 
In fact, if $\cC$ is fiat, the weak 
involution on $\cC$ extends to a contravariant biequivalence between
$\underline{\cC}$ and $\overline{\cC}$, which 
changes the direction of both $1$- and $2$-morphisms.
\item \label{remark:from-co-to-usual-2}
In general, we do not know what kind of exactness properties the horizontal composition bifunctor $\circ$
on $\underline{\cC}$ might have. At the same time, in the case when $\cC$ is a fiat $2$-category, both
the left and the right regular actions of $\cC$ on $\underline{\cC}$ are, automatically, given by exact functors.
\end{enumerate}
\end{remark}

\subsection{Abelianization of finitary \texorpdfstring{$2$}{2}-representations}\label{s4.4}

Let $\cC$ be a finitary $2$-ca\-te\-go\-ry and 
$\mathbf{M}$ a finitary 
$2$-representation of $\cC$. Then the (diagrammatic) 
{\em injective abelianization} 
$\underline{\mathbf{M}}$ of $\mathbf{M}$ is defined by 
$\underline{\mathbf{M}}(\mathtt{i})=\underline{\mathbf{M}(\mathtt{i})}$,
for $\mathtt{i}\in\cC$ (here again we use the 
above notation for injective abelianization). 
Note that $\underline{\mathbf{M}}$ has the structure of a $2$-representation of 
$\cC$ given by the component-wise action.

Moreover, $\underline{\mathbf{M}}$ has the natural 
structure of a 
$2$-representation of $\underline{\cC}$ with the 
action defined on objects as follows:
\[
(\mathrm{F},k,\mathrm{G}_i,\alpha_i)_{i=1}^{\infty}\circ
(M,k',N_i,f_i)_{i=1}^{\infty}=
(\mathrm{F}\, M,k+k',H_i,g_i)_{i=1}^{\infty},
\]
where
\begin{align*}
H_i &=
\begin{cases}
\mathrm{F}\, N_i, & i=1,2,\dots,k';\\
\mathrm{G}_{i}\, M,  & i=k'+1,k'+2,\dots,k'+k;\\
0,& \text{else};
\end{cases} 
\\
g_i &=
\begin{cases}
\mathrm{F}\, f_i, & i=1,2,\dots,k';\\
(\alpha_{i-k'})_M,  & i=k'+1,k'+2,\dots,k'+k;\\
0,& \text{else};
\end{cases} 
\end{align*}
and with the component-wise action on morphisms.

Similarly to the above, the canonical 
embedding of ${\mathbf{M}}$ into $\underline{\mathbf{M}}$ is
a morphism of $2$-re\-presentations of 
$\cC$ and provides an equivalence between
${\mathbf{M}}$ and the $2$-representation of 
$\cC$ given by the action of 
$\cC$ on injective objects of 
$\displaystyle\coprod_{\mathtt{i}\in\ccC}\underline{\mathbf{M}}(\mathtt{i})$.

As usual, the {\em projective abelianization} 
$\overline{\mathbf{M}}$ is defined dually.

\section{Finitary \texorpdfstring{$2$}{2}-representations via (co)algebra \texorpdfstring{$1$}{1}-morphisms}\label{s5}

This section is inspired by \cite[Chapter 7]{EGNO}. 
The main goal is to provide a 
generalization of \cite[Theorem 7.10.1, Corollary 7.10.5]{EGNO}
(see also \cite{Os,EO,ENO}).

\begin{remark}\label{remark:egno-vs-our-stuff}
The framework of \cite{EGNO} is that of a tensor category, where all objects 
have duals and which is assumed to be abelian. In our situation, we have 
a $2$-category $\cC$ instead of a tensor category, moreover, we assume
that $\cC$ is enriched over additive and not necessarily abelian categories. 
The existence of dual objects in the language of \cite{EGNO} translates into 
the existence of adjoint $1$-morphisms (and, in particular, 
adjunction  $2$-morphisms) in our setup, as described in 
Section \ref{s1.5}. If our $2$-category $\cC$ 
is fiat, these adjunctions exist. However, in passing to the abelianization 
$\underline{\cC}$, we lose the existence of adjunction $2$-morphisms. 
Consequently, we are  forced to use a construction which is dual to the one 
in \cite[Section 7.9]{EGNO} (as some important 
properties fail to hold for the direct generalization of the 
construction therein).
\end{remark}

Before starting, we point out that the abstract notions 
of (co)algebra objects and their (co)module categories 
from \cite[Section 7.8]{EGNO} generalize immediately to our setting. 
The only difference is that instead of (co)algebra objects in monoidal 
categories, we consider \textit{(co)algebra $1$-morphisms} in $2$-categories. 

\subsection{Internal homs}\label{s5.1}

Recall that left exact functors between module categories are determined uniquely, 
up to isomorphism, by their action on the category of injective modules,
see for example \cite[Chapter II,\S 2]{Ba} and the dual version of it.
In particular, given an algebra $A$, any additive functor from its category 
of injective modules $A\text{-}\mathrm{inj}$
to an abelian category $\mathcal{B}$ extends uniquely, up to isomorphism,
to a left exact functor from its category of modules 
$A\text{-}\mathrm{mod}$ to $\mathcal{B}$, 
and this extension
is natural with respect to natural transformations of functors. This observation
motivates the following construction.

Let $\cC$ be a fiat $2$-category 
and $\mathbf{M}$ a transitive
$2$-re\-pre\-sen\-ta\-tion of $\cC$. For 
$\mathtt{i},\mathtt{j}\in \cC$,
$M\in \mathbf{M}(\mathtt{i})$ and 
$N\in \mathbf{M}(\mathtt{j})$, consider the unique, 
up to isomorphism, left exact functor from 
$\underline{\cC}(\mathtt{i},\mathtt{j})$ to 
the category of finite dimensional $\mathbbm{k}$-vector spaces
given by 
\begin{equation}\label{eq1}
\mathrm{F}\mapsto \mathrm{Hom}_{\mathbf{M}(\mathtt{j})}(M,\mathrm{F}\, N),
\quad\text{ for }\mathrm{F}\in \cC(\mathtt{i},\mathtt{j}).
\end{equation}
Uniqueness is due to the equivalence between  
$\cC(\mathtt{i},\mathtt{j})$ and the full subcategory of injective objects 
in $\underline{\cC}(\mathtt{i},\mathtt{j})$
as in Section \ref{s4.3}. 

Being left exact, the
functor in \eqref{eq1} is representable, that is, there 
exists a 
$1$-morphism $\underline{\mathrm{Hom}}(N,M)\in \underline{\cC}(\mathtt{i},\mathtt{j})$, 
unique up to isomorphism, with an isomorphism
\begin{equation}\label{eq2}
\mathrm{Hom}_{\mathbf{M}(\mathtt{j})}(M,\mathrm{F}\, N)\cong
\mathrm{Hom}_{\underline{\ccC}(\mathtt{i},\mathtt{j})}(\underline{\mathrm{Hom}}(N,M),\mathrm{F}),
\quad\text{ for any }\mathrm{F}\in \cC(\mathtt{i},\mathtt{j}).
\end{equation}
The $1$-morphism $\underline{\mathrm{Hom}}(N,M)$ 
is called the {\em internal hom from $N$ to $M$}. 

In fact, the isomorphism from \eqref{eq2} also exists 
for all $\mathrm{F}\in\underline{\cC}(\mathtt{i},\mathtt{j})$:

\begin{lemma}\label{lemn2}
There is an isomorphism 
\begin{equation}\label{eq2-1}
\mathrm{Hom}_{\underline{\mathbf{M}}(\mathtt{j})}(M,\mathrm{F}\, N)\cong
\mathrm{Hom}_{\underline{\ccC}(\mathtt{i},\mathtt{j})}(\underline{\mathrm{Hom}}(N,M),\mathrm{F}),
\quad\text{ for any }\mathrm{F}\in \underline{\cC}(\mathtt{i},\mathtt{j}).
\end{equation}
\end{lemma}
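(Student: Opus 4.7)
The plan is to extend the isomorphism \eqref{eq2} from the subcategory $\cC(\mathtt{i},\mathtt{j})$ of injective objects to all of $\underline{\cC}(\mathtt{i},\mathtt{j})$ by invoking the uniqueness principle recalled at the start of Section \ref{s5.1}: a left exact functor on an injective abelianization of a finitary category, with values in vector spaces, is determined up to natural isomorphism by its restriction to the subcategory of injective objects. Concretely, I would view both sides of \eqref{eq2-1} as $\mathbbm{k}$-linear functors of $\mathrm{F}\in \underline{\cC}(\mathtt{i},\mathtt{j})$, check that both are left exact, observe that they agree naturally on the injectives $\cC(\mathtt{i},\mathtt{j})$ by \eqref{eq2} (the naturality being built into the representability defining $\underline{\mathrm{Hom}}(N,M)$), and conclude that they are naturally isomorphic on all of $\underline{\cC}(\mathtt{i},\mathtt{j})$.

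For the right-hand side of \eqref{eq2-1} there is nothing to do: a covariant representable functor on an abelian category is automatically left exact. For the left-hand side, I factor it as $\mathrm{Hom}_{\underline{\mathbf{M}}(\mathtt{j})}(M,-) \circ (-\,N)$, so since $\mathrm{Hom}_{\underline{\mathbf{M}}(\mathtt{j})}(M,-)$ is always left exact on an abelian category, it suffices to show that the action functor $\underline{\cC}(\mathtt{i},\mathtt{j})\to \underline{\mathbf{M}}(\mathtt{j})$, $\mathrm{F}\mapsto \mathrm{F}\,N$, is left exact. Passing through the equivalence between $\underline{\cC}(\mathtt{i},\mathtt{j})$ and the classical diagrammatic abelianization $\underline{\underline{\cC(\mathtt{i},\mathtt{j})}}$ of Section \ref{s4.1}, as well as its counterpart for $\underline{\mathbf{M}}(\mathtt{j})$, the explicit tuple formula from Section \ref{s4.4} becomes the canonical diagrammatic extension of the additive functor $\cC(\mathtt{i},\mathtt{j})\to \mathbf{M}(\mathtt{j})$, $\mathrm{F}\mapsto \mathrm{F}\,N$. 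Such a componentwise extension preserves kernels in the injective abelianization, which is precisely the left exactness we need.

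As an alternative finish once left exactness is in hand, I would argue directly using injective copresentations: any $\mathrm{F}\in \underline{\cC}(\mathtt{i},\mathtt{j})$ sits in a left exact sequence $0\to \mathrm{F}\to \mathrm{I}^0\to \mathrm{I}^1$ with $\mathrm{I}^0,\mathrm{I}^1\in \cC(\mathtt{i},\mathtt{j})$ (since $\cC(\mathtt{i},\mathtt{j})$ is equivalent to the subcategory of injectives in $\underline{\cC}(\mathtt{i},\mathtt{j})$, and every object admits such a copresentation). Applying both left exact functors to this sequence yields two left exact sequences of $\mathbbm{k}$-vector spaces connected by the isomorphisms \eqref{eq2} at $\mathrm{I}^0$ and $\mathrm{I}^1$, which are natural in the morphism $\mathrm{I}^0\to \mathrm{I}^1$; the universal property of kernels then induces the required isomorphism at $\mathrm{F}$.

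The main obstacle I expect is verifying the left exactness of $\mathrm{F}\mapsto \mathrm{F}\,N$ in the tuple description of $\underline{\cC}$ and $\underline{\mathbf{M}}$ from Section \ref{s4.3} and Section \ref{s4.4}. Because we are working with the bookkeeping version of the abelianization rather than the classical diagrammatic one, the check is not a one-liner; it needs either translation through the equivalence with $\underline{\underline{\cC(\mathtt{i},\mathtt{j})}}$ (where kernels are visibly componentwise) or a direct verification that the tuple formula commutes with the kernel construction. Everything else in the argument is then a formal application of the uniqueness principle for left exact extensions.
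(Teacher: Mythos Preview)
Your proposal is correct and follows essentially the same approach as the paper: the paper's proof is a two-line sketch invoking the equivalence between $\cC(\mathtt{i},\mathtt{j})$ and the injectives of $\underline{\cC}(\mathtt{i},\mathtt{j})$, then appealing to ``standard arguments using left exactness and the Five Lemma'' to extend \eqref{eq2}. Your injective-copresentation argument is exactly this standard argument written out (with the kernel universal property playing the role of the Five Lemma at the leftmost position), and your explicit check that $\mathrm{F}\mapsto \mathrm{F}\,N$ is left exact via the componentwise description of the abelianization supplies the detail the paper leaves implicit.
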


\begin{proof}
As the canonical embedding of ${\cC}(\mathtt{i},\mathtt{j})$ into the subcategory of 
injective objects of $\underline{\cC}(\mathtt{i},\mathtt{j})$ is an 
equivalence, see Section \ref{s4.3}, the claim follows from \eqref{eq2} by 
standard arguments using left exactness and the Five Lemma.
\end{proof}

\begin{lemma}\label{lem2}
If $M=N$, then $\mathtt{i}=\mathtt{j}$ and $A^N=\underline{\mathrm{Hom}}(N,N)$ has the  
structure of a coalgebra $1$-morphism in $\underline{\cC}(\mathtt{i},\mathtt{i})$. 
\end{lemma}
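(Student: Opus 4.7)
The plan is to construct the counit and comultiplication directly from the universal property \eqref{eq2-1}, and then verify coassociativity and counitality via Yoneda-type arguments.

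First, the equality $\mathtt{i}=\mathtt{j}$ is automatic: if $M=N$, then $N\in\mathbf{M}(\mathtt{i})\cap\mathbf{M}(\mathtt{j})$, forcing $\mathtt{i}=\mathtt{j}$. Write $A=A^N$ for short. I would next build the \emph{counit} $\epsilon\colon A\to \mathbbm{1}_{\mathtt{i}}$ as the element of $\mathrm{Hom}_{\underline{\ccC}(\mathtt{i},\mathtt{i})}(A,\mathbbm{1}_{\mathtt{i}})$ corresponding, under \eqref{eq2} with $\mathrm{F}=\mathbbm{1}_{\mathtt{i}}$, to $\mathrm{id}_N\in \mathrm{Hom}_{\mathbf{M}(\mathtt{i})}(N,\mathbbm{1}_{\mathtt{i}}N)$. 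Applying Lemma \ref{lemn2} with $\mathrm{F}=A$ to the identity of $A$, I then extract the \emph{coevaluation} $\eta\colon N\to A\,N$ in $\underline{\mathbf{M}}(\mathtt{i})$, characterized by the fact that under the isomorphism \eqref{eq2-1} it corresponds to $\mathrm{id}_A$.

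For the \emph{comultiplication} $\Delta\colon A\to A\circ A$, I would apply Lemma \ref{lemn2} once more, this time with $\mathrm{F}=A\circ A\in \underline{\cC}(\mathtt{i},\mathtt{i})$, and take $\Delta$ to be the unique preimage of the composite
\[
N\xrightarrow{\ \eta\ } A\,N \xrightarrow{\ A\eta\ } A\,A\,N=(A\circ A)\,N
\]
under the isomorphism $\mathrm{Hom}_{\underline{\ccC}(\mathtt{i},\mathtt{i})}(A,A\circ A)\cong \mathrm{Hom}_{\underline{\mathbf{M}}(\mathtt{i})}(N,(A\circ A)\,N)$. The naturality in $\mathrm{F}$ of the isomorphism \eqref{eq2-1} says precisely that, for every $2$-morphism $\gamma\colon\mathrm{F}\to\mathrm{G}$ in $\underline{\cC}(\mathtt{i},\mathtt{i})$, the morphism $A\to \mathrm{G}$ corresponding to $\gamma_N\circ_1 \eta\colon N\to\mathrm{G}\,N$ equals $\gamma\circ_1 \varphi$, where $\varphi\colon A\to\mathrm{F}$ corresponds to the original element $N\to\mathrm{F}\,N$. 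I would use this naturality as the key tool throughout.

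To verify \emph{coassociativity}, I would compute both $(\Delta\circ_0\mathrm{id}_A)\circ_1\Delta$ and $(\mathrm{id}_A\circ_0\Delta)\circ_1\Delta$ as elements of $\mathrm{Hom}_{\underline{\ccC}}(A,A\circ A\circ A)$ via the universal property (applied to $\mathrm{F}=A\circ A\circ A$). By the naturality described above, both map to the common element
\[
N\xrightarrow{\ \eta\ }A\,N\xrightarrow{\ A\eta\ }A\,A\,N\xrightarrow{\ A\,A\eta\ }A\,A\,A\,N,
\]
where the equality of the two possible bracketings is simply the associativity of the horizontal action (equivalently, the interchange law applied to $A$ acting iteratively on $\eta$). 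For the \emph{counit} axioms, the two composites $(\epsilon\circ_0\mathrm{id}_A)\circ_1\Delta$ and $(\mathrm{id}_A\circ_0\epsilon)\circ_1\Delta$ correspond, under the isomorphism with $\mathrm{F}=A$, to $(\epsilon_N\circ_1 A\eta)\circ_1 \eta$ and $(A\epsilon_N\circ_1 \eta_N\text{-}\textrm{slot})\circ_1\eta$, which collapse to $\eta$ by the very defining property of $\epsilon$ (namely that $\epsilon_N\circ_1\eta=\mathrm{id}_N$). Since $\eta$ corresponds to $\mathrm{id}_A$, both axioms follow.

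The main obstacle is bookkeeping: although the argument is conceptually a standard Yoneda manipulation, the fact that $A$ lives in $\underline{\cC}$ rather than $\cC$ means that the representability in \eqref{eq2} only gives the adjunction on injective objects; extending to arbitrary $\mathrm{F}\in\underline{\cC}(\mathtt{i},\mathtt{i})$ (and in particular to the composites $A\circ A$ and $A\circ A\circ A$) is exactly what Lemma \ref{lemn2} supplies, and I would invoke it at each step to make sure the comultiplication and its iterated versions are well-defined.
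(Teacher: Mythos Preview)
Your proposal is correct and follows essentially the same approach as the paper: define the counit via $\mathrm{id}_N$ under \eqref{eq2}, define the coevaluation $\eta$ via $\mathrm{id}_A$ under \eqref{eq2-1}, build $\Delta$ from the iterated coevaluation, and verify the axioms by the standard Yoneda/naturality argument (which the paper delegates to \cite[Section 7.9]{EGNO}). Your write-up is in fact more explicit than the paper's, which simply cites \cite{EGNO} for the comultiplication and the axiom checks; the only cosmetic issue is the informal notation in the right counit verification, where the needed identity $(\epsilon\circ_0\mathrm{id}_A)_N\circ_1(A\eta)=\eta\circ_1\epsilon_N$ follows from the interchange law.
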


\begin{proof}
We have the coevaluation map 
\[
\mathrm{coev}_{N,N}\colon N\to  \underline{\mathrm{Hom}}(N,N)\, N
\]
given by the image of $\mathrm{id}_{\underline{\mathrm{Hom}}(N,N)}$ under (the special
case of) the isomorphism 
\[
\mathrm{Hom}_{\mathbf{M}(\mathtt{i})}(N,\underline{\mathrm{Hom}}(N,N)\, N)\cong
\mathrm{Hom}_{\underline{\ccC}(\mathtt{i},\mathtt{i})}(\underline{\mathrm{Hom}}(N,N),
\underline{\mathrm{Hom}}(N,N))
\]
in \eqref{eq2-1}.
Following the arguments in \cite[Equation (7.29)]{EGNO}, 
this gives rise to the comultiplication morphism
\[
\underline{\mathrm{Hom}}(N,N)\to \underline{\mathrm{Hom}}(N,N)\circ \underline{\mathrm{Hom}}(N,N). 
\]

Similarly to \cite[Equation (7.30)]{EGNO}, the 
counit map is defined using the image of
$\mathrm{id}_N$ under (the special case of) the isomorphism 
\[
\mathrm{Hom}_{\mathbf{M}(\mathtt{i})}(N,\mathbbm{1}_{\mathtt{i}}\, N)\cong
\mathrm{Hom}_{\underline{\ccC}(\mathtt{i},\mathtt{i})}(\underline{\mathrm{Hom}}(N,N),
\mathbbm{1}_{\mathtt{i}})
\]
in \eqref{eq2}.
Finally, similarly to \cite[Section 7.9]{EGNO}, 
coassociativity and the other axioms are checked by direct computation. 
\end{proof}

We note that existence of adjoint $1$-morphisms (and adjunction $2$-morphisms) 
for the (analog of the) whole of $\underline{\cC}$ is not  used in \cite[Equations (7.29) and (7.30)]{EGNO}.

\subsection{The categories of comodules}\label{s5.2}

Let $\mathrm{comod}_{\underline{\ccC}}(A^N)$ denote the 
category of right \textit{$A^N$-comodule $1$-morphisms} in 
$\displaystyle\coprod_{\mathtt{j}\in\ccC}\underline{\cC}(\mathtt{i},\mathtt{j})$ 
and
$\mathrm{inj}_{\underline{\ccC}}(A^N)$ denote the \textit{subcategory of 
injective objects} in 
$\mathrm{comod}_{\underline{\ccC}}(A^N)$. Then we have 
the functor
\begin{equation}\label{eq3}
\begin{array}{rcl}
\Theta\colon\displaystyle\coprod_{\mathtt{j}\in\ccC}\mathbf{M}(\mathtt{j})&\to & 
\mathrm{comod}_{\underline{\ccC}}(A^N),\\
M&\mapsto& \underline{\mathrm{Hom}}(N,M),
\end{array}
\end{equation}
with the evident assignment for morphisms.

\begin{lemma}\label{lem3}
The functor $\Theta$ (weakly) commutes with the action of $\cC$ and defines a morphism of $2$-representations.
\end{lemma}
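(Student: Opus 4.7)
The plan is to construct, for each $1$-morphism $\mathrm{F}\in\cC(\mathtt{j},\mathtt{k})$, a natural isomorphism $\theta_{\mathrm{F},M}\colon \Theta(\mathrm{F}\, M)\xrightarrow{\sim}\mathrm{F}\circ\Theta(M)$ of right $A^N$-comodules, and then to verify that the family $(\theta_{\mathrm{F},M})$ satisfies the coherence axioms of a pseudonatural transformation of $2$-representations. Since $\cC$ is fiat, each $\mathrm{F}\in\cC$ has a $\cC$-right adjoint $\mathrm{F}^*$, and the adjunction $2$-morphisms extend componentwise through the tuple description of $\underline{\cC}$ and $\underline{\mathbf{M}}$ of Sections \ref{s4.3}--\ref{s4.4} to give an adjunction between the induced functors on the abelianizations. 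This is the crucial input that replaces the missing adjunctions for generic objects of $\underline{\cC}$ (cf. Remark \ref{remark:from-co-to-usual}\eqref{remark:from-co-to-usual-1}).

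To produce $\theta_{\mathrm{F},M}$, I would apply Yoneda after chaining Lemma \ref{lemn2} with the adjunction. For every $\mathrm{G}\in\underline{\cC}(\mathtt{i},\mathtt{k})$, one has the natural chain
\begin{align*}
\mathrm{Hom}_{\underline{\ccC}}\!\bigl(\underline{\mathrm{Hom}}(N,\mathrm{F}\, M),\mathrm{G}\bigr)
&\cong \mathrm{Hom}_{\underline{\mathbf{M}}(\mathtt{k})}(\mathrm{F}\, M,\mathrm{G}\, N)\\
&\cong \mathrm{Hom}_{\underline{\mathbf{M}}(\mathtt{j})}(M,\mathrm{F}^*\,\mathrm{G}\, N)\\
&\cong \mathrm{Hom}_{\underline{\ccC}}\!\bigl(\underline{\mathrm{Hom}}(N,M),\mathrm{F}^*\circ \mathrm{G}\bigr)\\
&\cong \mathrm{Hom}_{\underline{\ccC}}\!\bigl(\mathrm{F}\circ\underline{\mathrm{Hom}}(N,M),\mathrm{G}\bigr),
\end{align*}
where the first and third isomorphisms come from Lemma \ref{lemn2} and the second and fourth are instances of the $(\mathrm{F},\mathrm{F}^*)$-adjunction applied at the level of abelianizations. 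Yoneda then provides a canonical isomorphism $\theta_{\mathrm{F},M}$ in $\underline{\cC}(\mathtt{i},\mathtt{k})$, natural in $M$.

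Next I would verify that $\theta_{\mathrm{F},M}$ intertwines the $A^N$-coactions. By Lemma \ref{lem2} applied to $\mathrm{F}\, M$, the coaction on $\underline{\mathrm{Hom}}(N,\mathrm{F}\, M)$ is the image under \eqref{eq2-1} of the coevaluation $\mathrm{coev}_{N,\mathrm{F}\, M}$, while the coaction on $\mathrm{F}\circ\underline{\mathrm{Hom}}(N,M)$ is obtained by whiskering $\mathrm{id}_{\mathrm{F}}$ on the left of the coaction on $\underline{\mathrm{Hom}}(N,M)$. Tracing $\mathrm{id}_{\underline{\mathrm{Hom}}(N,\mathrm{F}\, M)}$ through the four displayed bijections and comparing the result with the whiskered coaction reduces the claim to a diagram chase expressing $\mathrm{coev}_{N,\mathrm{F}\, M}$ in terms of $\mathrm{coev}_{N,M}$ and the unit $\beta$ of the adjunction $(\mathrm{F},\mathrm{F}^*)$. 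This comodule compatibility is the main technical obstacle, since both coactions are defined purely by universal properties and one must unwind them simultaneously, using naturality of the four Hom-bijections above.

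Finally, the coherence with composition reduces to the identity $\theta_{\mathrm{F}'\mathrm{F},M}=(\mathrm{id}_{\mathrm{F}'}\circ_0\theta_{\mathrm{F},M})\circ_1\theta_{\mathrm{F}',\mathrm{F}\, M}$ (modulo the associator of $\underline{\cC}$) for composable $1$-morphisms $\mathrm{F},\mathrm{F}'$; the unit axiom $\theta_{\mathbbm{1}_{\mathtt{j}},M}=\mathrm{id}$ is immediate. Both follow from the uniqueness of representing objects together with compatibility of the adjunctions $(\mathrm{F},\mathrm{F}^*)$, $(\mathrm{F}',\mathrm{F}'^{*})$ and $(\mathrm{F}'\mathrm{F},\mathrm{F}^*\mathrm{F}'^{*})$. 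Compatibility of $\Theta$ with $2$-morphisms in $\cC$ then reduces to naturality of $\theta_{\mathrm{F},M}$ in $\mathrm{F}$, which is again a direct consequence of the Yoneda construction.
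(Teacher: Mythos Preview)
Your proposal is correct and follows essentially the same route as the paper: the paper also produces the isomorphism \eqref{eq4} via exactly your four-step Yoneda chain using the $(\mathrm{F},\mathrm{F}^*)$-adjunction, and then verifies coherence for composable $1$-morphisms by comparing the one-step chain for $\mathrm{F}'\mathrm{F}$ (using $(\mathrm{F}'\mathrm{F})^*$) with the iterated two-step chain (using $\mathrm{F}^*$ then $\mathrm{F}'^*$), reducing to the canonical isomorphism $(\mathrm{F}'\mathrm{F})^*\cong\mathrm{F}^*\mathrm{F}'^*$ and naturality of \eqref{eq2-1}. Your paragraph checking that $\theta_{\mathrm{F},M}$ is actually a morphism of $A^N$-comodules makes explicit a point the paper leaves implicit, so in that respect your write-up is slightly more complete.
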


\begin{proof}
We need to show that 
\begin{equation}\label{eq4}
\underline{\mathrm{Hom}}(N,\mathrm{F}\, M) \cong \mathrm{F}\, \underline{\mathrm{Hom}}(N,M),
\end{equation}
for any $1$-morphism $\mathrm{F}$ in $\cC(\mathtt{i},\mathtt{j})$. 
By the property of $\cC$ being fiat, \eqref{eq2} and 
uniqueness of the representing object up to 
isomorphism, we have 
\[
\begin{array}{rcl}
\mathrm{Hom}_{\underline{\ccC}(\mathtt{i},\mathtt{j})}(\underline{\mathrm{Hom}}(N,\mathrm{F}\, M),\mathrm{G})
&\cong& 
\mathrm{Hom}_{\mathbf{M}(\mathtt{j})}(\mathrm{F}\, M,\mathrm{G}\, N)
\\ 
 &\cong& 
\mathrm{Hom}_{\mathbf{M}(\mathtt{i})}(M,\mathrm{F}^{*}\circ\mathrm{G}\, N)
\\
 &\cong& 
\mathrm{Hom}_{\underline{\ccC}(\mathtt{i},\mathtt{i})}(\underline{\mathrm{Hom}}(N,M),\mathrm{F}^{*}\circ\mathrm{G})
\\
 &\cong& 
\mathrm{Hom}_{\underline{\ccC}(\mathtt{i},\mathtt{j})}(\mathrm{F}\circ \underline{\mathrm{Hom}}(N,M),\mathrm{G}),
\end{array}
\]
for any $\mathrm{G} \in \underline{\cC}(\mathtt{i},\mathtt{j})$, 
and the first statement follows.

For the second, we need to check coherence, in other words, we need to check that for any $\mathrm{G} \in \underline{\cC}(\mathtt{i},\mathtt{j}),\mathrm{F} \in \underline{\cC}(\mathtt{j},\mathtt{k}) , \mathrm{H} \in \underline{\cC}(\mathtt{i},\mathtt{k})$, the composite of isomorphisms 
\begin{equation*}\begin{split}
\mathrm{Hom}_{\underline{\ccC}(\mathtt{i},\mathtt{k})}&(\underline{\mathrm{Hom}}(N,\mathrm{F} \mathrm{G}M),\mathrm{H} )
\!\overset{\phi_1}{\longrightarrow}\! \mathrm{Hom}_{\mathbf{M}(\mathtt{k})}(\mathrm{F} \mathrm{G}M,\mathrm{H} N)         
\!\overset{\phi_2}{\longrightarrow}\! \mathrm{Hom}_{\mathbf{M}(\mathtt{i})}(M, (\mathrm{F} \mathrm{G})^*\mathrm{H} N)                 \\ & 
\overset{\phi_3}{\longrightarrow} \mathrm{Hom}_{\underline{\ccC}(\mathtt{i},\mathtt{i})}(\underline{\mathrm{Hom}}(N,M), (\mathrm{F} \mathrm{G})^*\mathrm{H} )              
\overset{\phi_4}{\longrightarrow}\mathrm{Hom}_{\underline{\ccC}(\mathtt{i},\mathtt{k})}(\mathrm{F} \mathrm{G}\underline{\mathrm{Hom}}(N,M), \mathrm{H} )        
\end{split}\end{equation*}
is equal to the composite of isomorphisms
\begin{equation*}\begin{split}
\mathrm{Hom}_{\underline{\ccC}(\mathtt{i},\mathtt{k})}&(\underline{\mathrm{Hom}}(N,\mathrm{F}\mathrm{G}M),\mathrm{H} )
\overset{\psi_1}{\longrightarrow} \mathrm{Hom}_{\mathbf{M}(\mathtt{k})}(\mathrm{F}\mathrm{G}M,\mathrm{H} N)   
\overset{\psi_2}{\longrightarrow} \mathrm{Hom}_{\mathbf{M}(\mathtt{j})}(\mathrm{G}M,\mathrm{F}^*\mathrm{H} N)  \\ &  
\overset{\psi_3}{\longrightarrow} \mathrm{Hom}_{\underline{\ccC}(\mathtt{i},\mathtt{j})}(\underline{\mathrm{Hom}}(N,\mathrm{G}M),\mathrm{F}^*\mathrm{H} )
\overset{\psi_4}{\longrightarrow} \mathrm{Hom}_{\mathbf{M}(\mathtt{j})}(\mathrm{G}M,\mathrm{F}^*\mathrm{H} N)  \\&
\overset{\psi_5}{\longrightarrow} \mathrm{Hom}_{\mathbf{M}(\mathtt{i})}(M,\mathrm{G}^*\mathrm{F}^*\mathrm{H} N)
\overset{\psi_6}{\longrightarrow} \mathrm{Hom}_{\underline{\ccC}(\mathtt{i},\mathtt{i})}(\underline{\mathrm{Hom}}(N,M), \mathrm{G}^*\mathrm{F}^*\mathrm{H} )     \\&
\overset{\psi_7}{\longrightarrow} \mathrm{Hom}_{\underline{\ccC}(\mathtt{i},\mathtt{j})}(\mathrm{G}\underline{\mathrm{Hom}}(N,M), \mathrm{F}^*\mathrm{H} )    
\overset{\psi_8}{\longrightarrow} \mathrm{Hom}_{\underline{\ccC}(\mathtt{i},\mathtt{k})}(\mathrm{F} \mathrm{G}\underline{\mathrm{Hom}}(N,M), \mathrm{H} ).   
\end{split}\end{equation*}

Since $\phi_1=\psi_1$ and $\psi_3$ and $\psi_4$ are mutual inverses, it suffices to check that $\phi_4\phi_3\phi_2 = \psi_8\psi_7\psi_6\psi_5\psi_2$, that is, that the solid part of the 
following diagram 
\[
\xymatrix@C.75cm{
\mathrm{Hom}_{\mathbf{M}(\mathtt{k})}(\mathrm{F}\mathrm{G}M,\mathrm{H} N) 
\ar[rrr]^{\psi_2}\ar[d]_{\phi_2}& &&\mathrm{Hom}_{\mathbf{M}(\mathtt{j})}(\mathrm{G}M,\mathrm{F}^*\mathrm{H} N)  
  \ar[d]^{\psi_5} \\
 \mathrm{Hom}_{\mathbf{M}(\mathtt{i})}(M, (\mathrm{F} \mathrm{G})^*\mathrm{H} N)  \ar[d]_{\phi_3} \ar@{<-->}[rrr] &&& \mathrm{Hom}_{\mathbf{M}(\mathtt{i})}(M,\mathrm{G}^*\mathrm{F}^*\mathrm{H} N)\ar[d]^{\psi_6} \\
\mathrm{Hom}_{\underline{\ccC}(\mathtt{i},\mathtt{i})}(\underline{\mathrm{Hom}}(N,M), (\mathrm{F} \mathrm{G})^*\mathrm{H} )     \ar[d]_{\phi_4}&&&\ar@{<-->}[lll] \mathrm{Hom}_{\underline{\ccC}(\mathtt{i},\mathtt{i})}(\underline{\mathrm{Hom}}(N,M), \mathrm{G}^*\mathrm{F}^*\mathrm{H} )     \ar[d]^{\psi_7}\\
\mathrm{Hom}_{\underline{\ccC}(\mathtt{i},\mathtt{k})}(\mathrm{F} \mathrm{G}\underline{\mathrm{Hom}}(N,M), \mathrm{H} )&&& \mathrm{Hom}_{\underline{\ccC}(\mathtt{i},\mathtt{j})}(\mathrm{G}\underline{\mathrm{Hom}}(N,M), \mathrm{F}^*\mathrm{H} )    \ar[lll]^{\psi_8}}
\]commutes.
Uniqueness of adjoints up to unique isomorphism (cf. \cite[Proposition 2.10.5]{EGNO}) gives us a unique isomorphism $(\mathrm{F} \mathrm{G})^* \cong \mathrm{G}^*\mathrm{F}^*$, which fills in the dashed arrows in the diagram, making the top and bottom squares commutative. Commutativity of the middle square is due to naturality of the isomorphism \eqref{eq2-1} from Lemma \ref{lemn2}. Coherence of $\Theta$ and hence the claim that it is a morphism of $2$-representations follows.
\end{proof}

\begin{lemma}\label{lem5}
For any $1$-morphism $\mathrm{F}$ in $\cC$ and 
any $X\in \mathrm{comod}_{\underline{\ccC}}(A^N)$,
we have an isomorphism
\begin{equation}\label{eq5}
\mathrm{Hom}_{\mathrm{comod}_{\underline{\ccC}}(A^N)}(X,\mathrm{F}\, A^N)\cong 
\mathrm{Hom}_{\underline{\ccC}}(X,\mathrm{F}).
\end{equation}
\end{lemma}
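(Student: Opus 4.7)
The plan is to recognize \eqref{eq5} as the standard cofree--forgetful adjunction for right $A^N$-comodules, transplanted into the setting of $\underline{\cC}$. Write $\Delta\colon A^N\to A^N\circ A^N$ and $\varepsilon\colon A^N\to\mathbbm{1}_{\mathtt{i}}$ for the comultiplication and counit of $A^N$ constructed in Lemma \ref{lem2}, and $\rho_X\colon X\to X\circ A^N$ for the coaction on $X$. First I would equip $\mathrm{F}\circ A^N$ with the right $A^N$-comodule structure
\[
\rho_{\mathrm{F}\circ A^N}:=\mathrm{id}_{\mathrm{F}}\circ_0\Delta\colon \mathrm{F}\circ A^N\to \mathrm{F}\circ A^N\circ A^N;
\]
coassociativity and counitality of $\rho_{\mathrm{F}\circ A^N}$ follow directly from the corresponding axioms for $\Delta$ via the interchange law and, importantly, require no adjunction $2$-morphism or exactness property of $\mathrm{F}$ in $\underline{\cC}$.

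Next I would construct the two candidate bijections
\[
\Phi\colon \mathrm{Hom}_{\underline{\ccC}}(X,\mathrm{F})\to \mathrm{Hom}_{\mathrm{comod}_{\underline{\ccC}}(A^N)}(X,\mathrm{F}\circ A^N),\quad \phi\mapsto(\phi\circ_0\mathrm{id}_{A^N})\circ_1\rho_X,
\]
\[
\Psi\colon \mathrm{Hom}_{\mathrm{comod}_{\underline{\ccC}}(A^N)}(X,\mathrm{F}\circ A^N)\to \mathrm{Hom}_{\underline{\ccC}}(X,\mathrm{F}),\quad \psi\mapsto(\mathrm{id}_{\mathrm{F}}\circ_0\varepsilon)\circ_1\psi,
\]
using the strict unit identification $\mathrm{F}\circ\mathbbm{1}_{\mathtt{i}}=\mathrm{F}$ in the target of $\Psi$. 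A short interchange-law computation combined with coassociativity of $\rho_X$ shows that $\Phi(\phi)$ indeed lies in the comodule hom space.

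Finally I would verify mutual invertibility. For $\Psi\circ\Phi=\mathrm{id}$, one rewrites $(\mathrm{id}_{\mathrm{F}}\circ_0\varepsilon)\circ_1(\phi\circ_0\mathrm{id}_{A^N})$ as $\phi\circ_1(\mathrm{id}_X\circ_0\varepsilon)$ by the interchange law and then applies the (right) counit axiom $(\mathrm{id}_X\circ_0\varepsilon)\circ_1\rho_X=\mathrm{id}_X$ for $X$. For $\Phi\circ\Psi=\mathrm{id}$, one uses that $\psi$ is a comodule map to replace $(\psi\circ_0\mathrm{id}_{A^N})\circ_1\rho_X$ by $(\mathrm{id}_{\mathrm{F}}\circ_0\Delta)\circ_1\psi$ and then collapses via the (left) counit axiom $(\varepsilon\circ_0\mathrm{id}_{A^N})\circ_1\Delta=\mathrm{id}_{A^N}$ for $A^N$ itself. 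Nothing deep is involved: the entire argument is a formal $2$-categorical pasting computation relying only on the coalgebra axioms for $A^N$. The one conceptual point worth flagging is that, in contrast to the proof of Lemma \ref{lem3}, the argument must and does avoid adjoints in $\underline{\cC}$ (which typically fail to exist, cf.\ Remark \ref{remark:from-co-to-usual}); the cofree comodule construction is naturally formulated without them, which is precisely why it is well suited to our setting.
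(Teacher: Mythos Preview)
Your proof is correct and follows essentially the same approach as the paper's own proof: the paper's two maps are precisely your $\Psi$ (postcompose with $\mathrm{id}_{\mathrm{F}}\circ_0\varepsilon$) and $\Phi$ (precompose $\beta\circ_0\mathrm{id}_{A^N}$ with the coaction $\rho_X$), and the paper then simply asserts that checking they are mutual inverses is straightforward. Your write-up is more explicit than the paper's, in that you spell out the comodule structure on $\mathrm{F}\circ A^N$, verify that $\Phi(\phi)$ is a comodule morphism, and record exactly which counit axiom is used in each half of the invertibility check.
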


\begin{proof}
Postcomposing $\alpha\colon X\to \mathrm{F}\, A^N $ with 
the map $\mathrm{F}\, A^N\to \mathrm{F}$ obtained by
applying $\mathrm{F}$ to the counit morphism of $A^N$, we get 
a $\mathbbm{k}$-linear map from the left-hand 
side to the right-hand side of \eqref{eq5}.

Precomposing the evaluation of $\beta\colon X\to \mathrm{F}$ 
at $A^N$ with the map 
$X\to X\, A^N$ coming from the comodule structure on $A^N$, we get 
a $\mathbbm{k}$-linear map from the 
right-hand side to the left-hand side 
of \eqref{eq5}. 

It is straightforward to check that these two maps are 
inverse to each other.
\end{proof}

\begin{lemma}\label{lem4}
The functor $\Theta$ factors over the 
inclusion 
$\mathrm{inj}_{\underline{\ccC}}(A^N)\hookrightarrow\mathrm{comod}_{\underline{\ccC}}(A^N)$.
\end{lemma}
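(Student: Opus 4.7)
The plan is to show that for every $M \in \coprod_{\mathtt{j} \in \ccC} \mathbf{M}(\mathtt{j})$, the comodule $\Theta(M) = \underline{\mathrm{Hom}}(N,M)$ is injective in $\mathrm{comod}_{\underline{\ccC}}(A^N)$. The strategy is to first prove that all ``cofree'' comodules of the form $\mathrm{F}\, A^N$, with $\mathrm{F} \in \cC$, are injective, and then combine Lemma \ref{lem3} with transitivity of $\mathbf{M}$ to reduce the general case to this one.

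For the first step, I would argue directly from Lemma \ref{lem5}: the natural isomorphism
\[
\mathrm{Hom}_{\mathrm{comod}_{\underline{\ccC}}(A^N)}(-,\mathrm{F}\, A^N)\;\cong\;\mathrm{Hom}_{\underline{\ccC}}(-,\mathrm{F})
\]
identifies the functor corepresented on comodules by $\mathrm{F}\, A^N$ with the restriction of a $\mathrm{Hom}$-functor on $\underline{\ccC}$. Since $\mathrm{F}$ lies in the image of the embedding $\cC \hookrightarrow \underline{\ccC}$, it is an injective object of $\underline{\ccC}$ by Section \ref{s4.3}, so the right-hand side above is exact. Hence the left-hand side is exact, proving that $\mathrm{F}\, A^N$ is injective in $\mathrm{comod}_{\underline{\ccC}}(A^N)$.

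For the second step, the commutation isomorphism \eqref{eq4} from Lemma \ref{lem3} gives
\[
\Theta(\mathrm{F}\, N)\;=\;\underline{\mathrm{Hom}}(N,\mathrm{F}\, N)\;\cong\;\mathrm{F}\,\underline{\mathrm{Hom}}(N,N)\;=\;\mathrm{F}\, A^N,
\]
so $\Theta(\mathrm{F}\, N)$ is injective for every $\mathrm{F} \in \cC$. Transitivity of $\mathbf{M}$, combined with $N \neq 0$, then ensures that every $M$ in the coproduct is, up to isomorphism, a direct summand of some $\mathrm{F}\, N$ with $\mathrm{F}\in\cC$ (by collapsing a finite direct sum of $1$-morphisms of $\cC$ into a single $\mathrm{F}$ via additivity of the action). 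Since $\Theta$ is an additive functor, $\Theta(M)$ is a direct summand of $\Theta(\mathrm{F}\, N)\cong\mathrm{F}\, A^N$, and summands of injective objects are injective.

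The only delicate point is verifying that the isomorphism of Lemma \ref{lem5} is natural in the first variable, so that exactness of $\mathrm{Hom}_{\underline{\ccC}}(-,\mathrm{F})$ actually transfers to $\mathrm{Hom}_{\mathrm{comod}_{\underline{\ccC}}(A^N)}(-,\mathrm{F}\, A^N)$. This, however, is a direct check from the explicit pre- and post-composition formulas given in the proof of Lemma \ref{lem5}, using naturality of the counit of $A^N$ and of the comodule structure maps.
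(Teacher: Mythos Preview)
Your proof is correct and follows essentially the same approach as the paper's: both use Lemma~\ref{lem5} to establish injectivity of $\mathrm{F}\,A^N$, then Lemma~\ref{lem3} to identify $\Theta(\mathrm{F}\,N)\cong\mathrm{F}\,A^N$, and finally transitivity of $\mathbf{M}$ to cover arbitrary $M$ as a summand. The only minor difference is that the paper first proves $A^N$ itself is injective (applying Lemma~\ref{lem5} with $\mathrm{F}=\mathbbm{1}_{\mathtt{i}}$) and then invokes fiatness of $\cC$ to deduce that $\mathrm{F}\,A^N$ is injective, whereas you apply Lemma~\ref{lem5} directly for general $\mathrm{F}\in\cC$ and use injectivity of $\mathrm{F}$ in $\underline{\cC}$; your route is slightly more direct but not materially different.
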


\begin{proof}
If $M=\mathrm{F}\,N$, for some 
$1$-morphism $\mathrm{F}$ in $\cC$, 
then, from \eqref{eq4}, we have
\[
\underline{\mathrm{Hom}}(N,\mathrm{F}\, N)\cong\mathrm{F}\, A^N.
\]

Next we claim that $A^N$ is injective in $\mathrm{comod}_{\underline{\ccC}}(A^N)$.
From Lemma \ref{lem5}, it follows that
\[
\mathrm{Hom}_{\mathrm{comod}_{\underline{\ccC}}(A^N)}({}_-,A^N)\cong 
\mathrm{Hom}_{\underline{\ccC}}({}_-,\mathbbm{1}_{\mathtt{i}}).
\]
Hence, injectivity of $A^N$ as a comodule reduces to injectivity of $\mathbbm{1}_{\mathtt{i}}$ as 
an object of $\underline{\cC}$, 
which holds by construction of the injective abelianization, cf. Section \ref{s4.3}. 

Due to fiatness of $\cC$, it follows that $\mathrm{F}\, A^N$
is injective in $\mathrm{comod}_{\underline{\ccC}}(A^N)$.
By transitivity of $\mathbf{M}$, any $M$ is isomorphic to a direct summand of 
some $\mathrm{F}\,N$. The claim follows.
\end{proof}

\subsection{Transitive \texorpdfstring{$2$}{2}-representations and comodule categories}\label{s5.3}

Recall that $\cC$ denotes a fiat $2$-category 
and $\mathbf{M}$ a transitive
$2$-re\-pre\-sen\-ta\-tion of $\cC$.

\begin{theorem}\label{thm7}
Let $N\in\mathbf{M}(\mathtt{i})$ be non-zero. Then the functor $\Theta$ gives rise to 
an equivalence of $2$-representations of 
$\cC$ between $\underline{\mathbf{M}}$ and $\mathrm{comod}_{\underline{\ccC}}(A^N)$.
This equivalence restricts to an equivalence of $2$-representations of 
$\cC$ between ${\mathbf{M}}$ and $\mathrm{inj}_{\underline{\ccC}}(A^N)$.
\end{theorem}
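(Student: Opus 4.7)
The plan is to establish the theorem in two stages: first show that $\Theta$ restricts to an equivalence of additive categories $\mathbf{M} \xrightarrow{\sim} \mathrm{inj}_{\underline{\ccC}}(A^N)$, then extend this to an equivalence of the full abelianizations by the universality of the injective abelianization (Sections~\ref{s4.1}--\ref{s4.4}). For the upgrade, $\underline{\mathbf{M}}$ has $\mathbf{M}$ as its full subcategory of injectives, and $\mathrm{comod}_{\underline{\ccC}}(A^N)$ is likewise the injective abelianization of $\mathrm{inj}_{\underline{\ccC}}(A^N)$: it is abelian, and the monomorphism $X \hookrightarrow \mathrm{F}\, A^N$ constructed below provides enough injectives. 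The extension $\underline{\Theta}$ then sends an object $M_0 \xrightarrow{f} M_1$ of $\underline{\mathbf{M}}$ to $\ker(\Theta(f))$, and compatibility with the $\cC$-action is inherited from Lemma~\ref{lem3}.

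For full faithfulness of $\Theta$, fix $M \in \mathbf{M}(\mathtt{j})$ and $M' \in \mathbf{M}(\mathtt{j}')$. By transitivity of $\mathbf{M}$, we may write $M'$ as a direct summand of $\mathrm{F}\, N$ for some $\mathrm{F} \in \cC(\mathtt{i}, \mathtt{j}')$, so by idempotent completeness it suffices to treat the case $M' = \mathrm{F}\, N$. Combining the isomorphism $\Theta(\mathrm{F}\, N) \cong \mathrm{F}\, A^N$ extracted from~\eqref{eq4}, Lemma~\ref{lem5}, and the defining isomorphism~\eqref{eq2} of the internal hom yields
\[
\mathrm{Hom}_{\mathrm{comod}_{\underline{\ccC}}(A^N)}(\Theta(M), \Theta(\mathrm{F}\, N)) \cong \mathrm{Hom}_{\underline{\ccC}}(\Theta(M), \mathrm{F}) \cong \mathrm{Hom}_{\mathbf{M}(\mathtt{j}')}(M, \mathrm{F}\, N),
\]
and a direct diagram chase using the explicit forms of evaluation, coevaluation, and the counit of $A^N$ shows that this composite realises the map induced by $\Theta$ on hom spaces.

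For essential surjectivity on injectives, let $X \in \mathrm{comod}_{\underline{\ccC}}(A^N)$ be injective. Its coaction $\rho_X \colon X \to X\, A^N$ is a split monomorphism of comodules, with retraction supplied by the counit of $A^N$. Choose an injective hull $X \hookrightarrow \mathrm{F}$ of the underlying $1$-morphism of $X$ in $\underline{\cC}$; by Section~\ref{s4.1}, $\mathrm{F}$ may be taken to lie in $\cC$. Applying the cofree functor $(-)\, A^N$, which is right adjoint to the forgetful functor $\mathrm{comod}_{\underline{\ccC}}(A^N) \to \underline{\cC}$ by (the general form of) Lemma~\ref{lem5} and hence preserves monomorphisms, produces a monomorphism $X\, A^N \hookrightarrow \mathrm{F}\, A^N$. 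Composing yields a monomorphism $X \hookrightarrow \mathrm{F}\, A^N$, which splits by injectivity of $X$; since $\mathrm{F}\, A^N \cong \Theta(\mathrm{F}\, N)$, full faithfulness of $\Theta$ together with idempotent completeness of $\mathbf{M}$ force $X \cong \Theta(M)$ for some direct summand $M$ of $\mathrm{F}\, N$.

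The main obstacle is verifying that the abelianized $\underline{\Theta}$ intertwines the $\cC$-action on all of $\underline{\mathbf{M}}$, not merely on the injective subcategory $\mathbf{M}$ on which Lemma~\ref{lem3} applies directly. This is a coherence check that must propagate Lemma~\ref{lem3} through the kernel construction defining $\underline{\Theta}$ and through the various adjunction isomorphisms used above, and is of the same flavour as the associativity diagram verified in the proof of Lemma~\ref{lem3} itself.
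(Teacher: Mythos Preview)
Your argument is correct and uses the same ingredients as the paper's proof (the identification $\Theta(\mathrm{F}N)\cong\mathrm{F}\,A^N$ from Lemma~\ref{lem3}, the cofree adjunction of Lemma~\ref{lem5}, and the split monomorphism $X\hookrightarrow X\,A^N$ via the coaction), only organized in a different order. The paper first extends $\Theta$ to all of $\underline{\mathbf{M}}$ as a left exact functor and verifies full faithfulness there via injective resolutions, then proves essential surjectivity; you instead establish the equivalence $\mathbf{M}\simeq\mathrm{inj}_{\underline{\ccC}}(A^N)$ and appeal to the identification $\mathrm{comod}_{\underline{\ccC}}(A^N)\simeq\underline{\mathrm{inj}_{\underline{\ccC}}(A^N)}$, which the paper also invokes (``as usual''). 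Your full-faithfulness computation is in fact slightly slicker, reducing in only one variable rather than two. For essential surjectivity the paper produces the monomorphism $X\,A^N\hookrightarrow Y\,A^N$ by hand from the explicit description of objects in $\underline{\cC}$, whereas you deduce it from the right-adjoint property of the cofree functor; these amount to the same thing, though note that Lemma~\ref{lem5} as stated restricts to $\mathrm{F}\in\cC$, so you should remark that its proof goes through verbatim for arbitrary $\mathrm{F}\in\underline{\cC}$. The coherence issue you flag at the end is genuine but routine, and the paper does not spell it out in any more detail than you do.
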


\begin{proof}
For any objects of the form $\mathrm{F}N, \mathrm{G}N$ in 
$\displaystyle\mathcal{M}=\coprod_{\mathtt{j}\in\ccC}\mathbf{M}(\mathtt{j})$, we have 
\begin{equation*}
\begin{split}
\mathrm{Hom}_{\mathrm{comod}_{\underline{\ccC}}(A^N)}(\Theta(\mathrm{F}N), \Theta(\mathrm{G}N))&\cong 
\mathrm{Hom}_{\mathrm{comod}_{\underline{\ccC}}(A^N)}(\mathrm{F}\,A^N,\mathrm{G}\,A^N)\\
&\cong \mathrm{Hom}_{\underline{\ccC}}(\mathrm{F}\,A^N,\mathrm{G})\\
&\cong \mathrm{Hom}_{\underline{\ccC}}(A^N, \mathrm{F}^*\mathrm{G})\\
&\cong \mathrm{Hom}_{\mathcal{M}}(N, \mathrm{F}^*\mathrm{G}\,N)\\
&\cong \mathrm{Hom}_{\mathcal{M}}(\mathrm{F}\,N, \mathrm{G}\,N),
\end{split}
\end{equation*}
where the first isomorphism uses Lemma \ref{lem3} and the 
second isomorphism uses Lemma \ref{lem5}.
From transitivity of $\mathbf{M}$, we deduce that, for any 
$N_1,N_2\in \mathcal{M}$, we have 
\[ 
\mathrm{Hom}_{\mathrm{comod}_{\underline{\ccC}}(A^N)}(\Theta(N_1),\Theta(N_2))\cong
\mathrm{Hom}_{\mathcal{M}}(N_1,N_2).
\]

The functor $\underline{\mathrm{Hom}}(N,{}_-)$ extends 
uniquely (up to isomorphism) to a left exact 
functor from 
$\displaystyle\underline{\mathcal{M}}=\coprod_{\mathtt{j}\in\ccC}\underline{\mathbf{M}}(\mathtt{j})$ 
to $\underline{\mathrm{inj}}_{\underline{\ccC}}(A^N)$, 
where the latter category is equivalent 
to $\mathrm{comod}_{\underline{\ccC}}(A^N)$, as usual (cf. Section \ref{s4}).
 
Considering injective resolutions of 
$N_1,N_2 \in \underline{\mathcal{M}}$ by objects in $\mathcal{M}$, 
using left exactness and 
arguments similar to the one in the proof of \cite[Theorem 7.10.1(2)]{EGNO}, 
one checks that, indeed, 
\[ 
\mathrm{Hom}_{\mathrm{comod}_{\underline{\ccC}}(A^N)}(\Theta(N_2),\Theta(N_1))\cong
\mathrm{Hom}_{\underline{\mathcal{M}}}(N_1,N_2),
\]
for any $N_1,N_2\in \underline{\mathcal{M}}$.
Consequently, $\Theta\colon \underline{\mathcal{M}}\to \underline{\mathrm{inj}}_{\underline{\ccC}}(A_N)$ 
is full and faithful and it remains to 
show that it is essentially surjective.

To prove that $\Theta$ is essentially surjective, it suffices to show 
that $\Theta$ maps an injective cogenerator of 
$\underline{\mathbf{M}}$ to an injective cogenerator of 
$\mathrm{comod}_{\underline{\ccC}}(A^N)$. In other words, it suffices to show that any injective $A^N$-comodule is 
isomorphic to a direct summand
of a comodule of the form $\mathrm{F}\, A^N$, for some 
$1$-morphism $\mathrm{F}$ in $\cC$. To prove the latter,
we just need to show that any $A^N$-comodule injects into a 
comodule of the form $\mathrm{F}\, A^N$, 
for some $1$-morphism $\mathrm{F}$. These claims are, basically, 
the injective versions of \cite[Exercises 7.8.14 and 7.8.15]{EGNO}.

Let $X\in \mathrm{comod}_{\underline{\ccC}}(A^N)$. Then the 
coaction map
$X\mapsto X\, A^N$ is a homomorphism of $A^N$-comodules. This 
homomorphism is 
injective as its postcomposition with the evaluation 
$X\, A^N\to X$ at $X$ of the
counit morphism for $A^N$ is the identity. 
Further, if $X$ is of the form $(Y,k,Z_i,\alpha_i)$, then 
we have a natural injection of 
$X\, A^N$ into $Y\, A^N$ given by the tuple $(\mathrm{id}_Y\circ_0 \mathrm{id}_{A^N},0)$. 
Now we note that the $A^N$-comodule $Y\, A^N$ has the 
necessary form. 
This shows that $\Theta$ is essentially surjective. As we have already 
established that it is full and faithful, 
the first claim of the theorem follows. 

The second claim of the theorem follows from the first one and Lemma \ref{lem4}.
\end{proof}

Different choices of $N$ lead to 
Morita--Takeuchi equivalent coalgebra $1$-morphisms, 
not to isomorphic ones, as we will explain in 
Corollary \ref{cor:Mor-Tak}. For the remainder of this section, we fix some non-zero 
object $N$ in $\underline{\mathcal{M}}$.

\subsection{Transitive \texorpdfstring{$2$}{2}-representations and module categories}\label{s5.4}

We get a ``dual version'' of Theorem \ref{thm7} as well. For an 
algebra $1$-morphism  $A$ in $\overline{\cC}$, denote by $\mathrm{mod}_{\overline{\ccC}}(A)$ the 
category of right \textit{$A$-module $1$-mor\-phisms} in $\overline{\cC}$
and by $\mathrm{proj}_{\overline{\ccC}}(A)$ the 
\textit{subcategory of projective $A$-module $1$-morphisms} in $\overline{\cC}$.

\begin{corollary}\label{cor-72}
There exists an algebra $1$-morphism $A_N$ in $\overline{\cC}$ and an 
equivalence of $2$-rep\-re\-sen\-ta\-ti\-ons of $\cC$ between $\overline{\mathbf{M}}$ and 
$\mathrm{mod}_{\overline{\ccC}}(A_N)$, which restricts to an equivalence of 
$2$-representations of $\cC$ between ${\mathbf{M}}$ and $\mathrm{proj}_{\overline{\ccC}}(A_N)$.
\end{corollary}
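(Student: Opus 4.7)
The plan is to dualize the entire construction of Sections~\ref{s5.1}--\ref{s5.3}, replacing the injective abelianization $\underline{\cC}$ and coalgebras/comodules throughout by the projective abelianization $\overline{\cC}$ and algebras/modules. Concretely, for $M\in\mathbf{M}(\mathtt{j})$ and $N\in\mathbf{M}(\mathtt{i})$, I would first represent the functor $\mathrm{F}\mapsto\mathrm{Hom}_{\mathbf{M}(\mathtt{j})}(\mathrm{F}\,N,M)$ on $\cC(\mathtt{i},\mathtt{j})$, which extends (by the dual of the observation preceding \eqref{eq1}, using that projectives generate $\overline{\cC}$) uniquely to a contravariant left exact functor on $\overline{\cC}(\mathtt{i},\mathtt{j})$. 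This is representable by a $1$-morphism $\overline{\mathrm{Hom}}(N,M)\in\overline{\cC}(\mathtt{i},\mathtt{j})$ with
$$
\mathrm{Hom}_{\mathbf{M}(\mathtt{j})}(\mathrm{F}\,N,M)\cong\mathrm{Hom}_{\overline{\ccC}(\mathtt{i},\mathtt{j})}(\mathrm{F},\overline{\mathrm{Hom}}(N,M)),
$$
with a straightforward extension to all $\mathrm{F}\in\overline{\cC}(\mathtt{i},\mathtt{j})$ mirroring Lemma~\ref{lemn2}. Setting $M=N$ and chasing the identity through these isomorphisms (dual of the arguments for \cite[(7.29),(7.30)]{EGNO}) produces multiplication and unit maps making $A_N:=\overline{\mathrm{Hom}}(N,N)$ an algebra $1$-morphism in $\overline{\cC}(\mathtt{i},\mathtt{i})$.

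The remaining steps then mirror Sections~\ref{s5.2}--\ref{s5.3} in the obvious way: define
$$
\Theta\colon\coprod_{\mathtt{j}\in\ccC}\mathbf{M}(\mathtt{j})\longrightarrow\mathrm{mod}_{\overline{\ccC}}(A_N),\qquad M\mapsto\overline{\mathrm{Hom}}(N,M);
$$
verify (weak) $\cC$-equivariance using fiatness (the adjunction $(\mathrm{F},\mathrm{F}^*)$) and uniqueness of representing objects, as in Lemma~\ref{lem3}; prove the analogue of Lemma~\ref{lem5}, namely $\mathrm{Hom}_{\mathrm{mod}_{\overline{\ccC}}(A_N)}(\mathrm{F}\,A_N,X)\cong\mathrm{Hom}_{\overline{\ccC}}(\mathrm{F},X)$, by composing/precomposing with the unit of $A_N$; conclude that $\Theta$ lands in $\mathrm{proj}_{\overline{\ccC}}(A_N)$ (analogue of Lemma~\ref{lem4}) using that $\mathbbm{1}_{\mathtt{i}}$ is projective in $\overline{\cC}$ by construction and that the $\cC$-action on $\overline{\cC}$ is exact by Remark~\ref{remark:from-co-to-usual}(b); and finally deduce full faithfulness and essential surjectivity exactly as in the proof of Theorem~\ref{thm7}, with projective resolutions and surjections $\mathrm{F}\,A_N\twoheadrightarrow X$ replacing injective resolutions and injections $X\hookrightarrow\mathrm{F}\,A^N$.

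A more conceptual alternative is to transport Theorem~\ref{thm7} along the contravariant biequivalence ${}^*\colon\underline{\cC}\to\overline{\cC}$ of Remark~\ref{remark:from-co-to-usual}(a): since this biequivalence reverses the order of horizontal composition, the coalgebra $A^N$ is sent to an algebra $A_N:=(A^N)^*$, and injective objects in $\mathrm{comod}_{\underline{\ccC}}(A^N)$ are sent to projective objects in the corresponding module category. The main obstacle in this approach --- and the one subtle point in either approach --- is keeping sidedness conventions straight: the contravariant duality ${}^*$ naively sends right $A^N$-comodules to \emph{left} $A_N$-modules, so one has to combine it with the weak involution structure on $\cC$ to flip sides and land in right modules over $A_N$. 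The direct approach sketched above avoids this bookkeeping at the cost of essentially repeating the proof of Theorem~\ref{thm7} in dualized form.
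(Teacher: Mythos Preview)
Your ``more conceptual alternative'' --- transporting Theorem~\ref{thm7} along the contravariant biequivalence ${}^*\colon\underline{\cC}\to\overline{\cC}$ of Remark~\ref{remark:from-co-to-usual}\eqref{remark:from-co-to-usual-1} --- is precisely the paper's proof, stated in three sentences. The sidedness bookkeeping you flag is absorbed into the phrase ``swaps coalgebra objects with algebra objects and comodules with modules''; since ${}^*$ reverses horizontal composition, a right $A^N$-comodule is sent to a right $(A^N)^*$-module, and the paper simply sets $A_N=(A^N)^*$.

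Your primary approach, by contrast, is exactly the route the authors explicitly set aside: immediately after the proof they remark that they ``do not see how to prove Corollary~\ref{cor-72} directly, as in Theorem~\ref{thm7} we heavily rely on \emph{left} exactness of all constructions, in particular, of the internal hom bifunctor.'' Your sketch is plausible and each step does appear to dualize formally, so it is not clear whether the authors encountered a genuine obstruction or were simply being cautious. The place to be careful is an exactness mismatch you pass over: the representing object $\overline{\mathrm{Hom}}(N,M)$ satisfies $\mathrm{Hom}_{\overline{\ccC}}(\mathrm{F},\overline{\mathrm{Hom}}(N,M))\cong\mathrm{Hom}_{\mathbf{M}}(\mathrm{F}\,N,M)$ for projective $\mathrm{F}$, which forces $M\mapsto\overline{\mathrm{Hom}}(N,M)$ to be \emph{left} exact as a functor on $\overline{\mathbf{M}}$, whereas the extension of $\Theta$ from $\mathbf{M}$ (the projectives) to $\overline{\mathbf{M}}$ that you need for the full-faithfulness argument is the \emph{right} exact one. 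These two extensions need not agree, so you must commit to one and verify that all the lemmas still go through with that choice. The duality argument bypasses this issue entirely.
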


\begin{proof}
The weak involution ${}^*$ of the fiat category $\cC$ gives rise to a duality
(on the level of both $1$- and $2$-morphisms)
between $\underline{\cC}$ and $\overline{\cC}$.
This swaps coalgebra objects with algebra objects and comodules with modules.
Therefore all claims follow from Theorem \ref{thm7} using this duality.
\end{proof}

Unfortunately, we do not see how to prove 
Corollary \ref{cor-72} directly, as in Theorem \ref{thm7} 
we heavily rely on {\em left} exactness of all 
constructions, in particular, of the internal hom bifunctor.

\begin{corollary}\label{cor9}
If $\mathbf{M}$ is simple transitive, then $A_N$ is simple.
\end{corollary}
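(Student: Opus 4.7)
The plan is to argue by contrapositive: I will show that if $A_N$ is not simple---i.e., admits a proper non-zero two-sided ideal $J \subsetneq A_N$ (a sub-$1$-morphism of $A_N$ in $\overline{\cC}(\mathtt{i},\mathtt{i})$ closed under both left and right multiplication by $A_N$)---then $\mathbf{M}$ is not simple transitive, contradicting the hypothesis.

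Under the equivalence of Corollary \ref{cor-72}, the object $N$ corresponds to $A_N$ regarded as the rank-one free right $A_N$-module, so there is a natural algebra identification $\mathrm{End}_{\mathbf{M}}(N) \cong A_N$ via left multiplication. Using this, I define $\mathbf{I}$ to be the smallest $\cC$-stable ideal of $\mathbf{M}$ containing $J \subset \mathrm{End}_{\mathbf{M}}(N)$; explicitly, $\mathbf{I}(X,Y)$ is the $\mathbbm{k}$-span of morphisms of the form $\alpha \circ (\mathrm{F} \cdot j) \circ \beta$, where $\mathrm{F} \in \cC$, $\beta \colon X \to \mathrm{F}\, N$, $j \in J$, and $\alpha \colon \mathrm{F}\, N \to Y$. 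Closure under pre- and post-composition is built into the definition; $\cC$-stability follows from the identity $\mathrm{G} \cdot (\mathrm{F}\cdot j) = (\mathrm{G} \mathrm{F}) \cdot j$, so $\mathrm{G}$ applied to a generator is a generator with $\mathrm{F}$ replaced by $\mathrm{G}\mathrm{F}$. Non-triviality is evident: any nonzero $j \in J$ furnishes a nonzero element of $\mathbf{I}(N,N)$ by taking $\mathrm{F} = \mathbbm{1}_{\mathtt{i}}$ and $\alpha = \beta = \mathrm{id}_N$.

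The expected main obstacle is to show that $\mathbf{I}$ is proper, equivalently, that $\mathrm{id}_N \notin \mathbf{I}(N,N)$. The strategy is to verify that every element of $\mathbf{I}(N,N)$ already lies in $J$. Transporting a generator $\alpha \circ (\mathrm{F} \cdot j) \circ \beta \colon N \to N$ via the equivalence yields a composite $A_N \to \mathrm{F}\,A_N \to \mathrm{F}\,A_N \to A_N$ of right $A_N$-module maps, in which the middle arrow factors through $\mathrm{F}\,J \hookrightarrow \mathrm{F}\,A_N$ because the left multiplication $\mu_{\hat j}$ by the element $\hat j \in J$ corresponding to $j$ factors through $J \hookrightarrow A_N$. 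Using the fiat adjunction $\mathrm{F} \dashv \mathrm{F}^*$, I would convert $\alpha$ and $\beta$ into elements $a, b \in A_N = \mathrm{End}_{A_N}(A_N)$ via their mates, rewriting the whole composite as left multiplication by $a\, \hat j\, b \in A_N$. Two-sidedness of $J$ then gives $a\, \hat j\, b \in J$, so $\mathbf{I}(N,N) \subseteq J$, and since $1 \notin J$ we conclude $\mathrm{id}_N \notin \mathbf{I}$. Making this adjunction-based conversion of $\alpha$ and $\beta$ fully rigorous---carefully chasing the mates through the equivalence, and verifying the explicit form of the resulting composite as left multiplication on $A_N$---is the technical heart of the argument.
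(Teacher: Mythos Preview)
Your argument rests on a type confusion. The algebra $1$-morphism $A_N$ is an object of $\overline{\cC}(\mathtt{i},\mathtt{i})$, not a $\mathbbm{k}$-algebra with elements, so the claimed identification $\mathrm{End}_{\mathbf{M}}(N)\cong A_N$ is ill-formed. What one actually obtains (by the dual of Lemma~\ref{lem5}) is
\[
\mathrm{End}_{\mathrm{mod}_{\overline{\ccC}}(A_N)}(A_N)\;\cong\;\mathrm{Hom}_{\overline{\ccC}}(\mathbbm{1}_{\mathtt{i}},A_N),
\]
a finite-dimensional $\mathbbm{k}$-algebra that is in general much smaller than, and not identifiable with, $A_N$. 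A two-sided ideal $J$ of $A_N$ is a sub-$1$-morphism of $A_N$ closed under multiplication on both sides, not a collection of endomorphisms of $N$; the phrase ``$J\subset\mathrm{End}_{\mathbf{M}}(N)$'' therefore does not parse. If you attempt to repair this by passing to the subspace $\mathrm{Hom}_{\overline{\ccC}}(\mathbbm{1}_{\mathtt{i}},J)\subset\mathrm{Hom}_{\overline{\ccC}}(\mathbbm{1}_{\mathtt{i}},A_N)$, there is no reason this subspace is nonzero when $J\neq 0$, so your ``non-triviality is evident'' step collapses. The same confusion reappears in the properness sketch: the mates of $\alpha\colon \mathrm{F}\,N\to N$ and $\beta\colon N\to \mathrm{F}\,N$ under $\mathrm{F}\dashv\mathrm{F}^*$ are $2$-morphisms $\mathbbm{1}_{\mathtt{i}}\to\mathrm{F}^*$ and $\mathbbm{1}_{\mathtt{i}}\to\mathrm{F}$, not ``elements $a,b\in A_N$'', so the expression $a\,\hat{\jmath}\,b$ has no meaning in this setting.

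The paper proceeds differently. It works directly in $\mathrm{proj}_{\overline{\ccC}}(A_N)$, taking the $\cC$-stable ideal generated by the morphisms produced from $J$ by whiskering with projective modules, and establishes properness not by bounding $\mathbf{I}(N,N)$ inside $J$ but by invoking nilpotency of the Jacobson radical of the finite-dimensional algebra underlying the finitary category $\mathrm{proj}_{\overline{\ccC}}(A_N)$. Whatever construction you adopt, you must treat $J$ as a subobject of $A_N$ in $\overline{\cC}$---for instance via the natural $A_N$-module maps $Y\circ_{A_N}J\to Y$ for $Y\in\mathrm{proj}_{\overline{\ccC}}(A_N)$---rather than as a set of endomorphisms of $N$.
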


\begin{proof}
If $J$ is a non-trivial two-sided ideal of $A_N$, then 
all morphisms of the form $X\,\alpha$, where $\alpha\in J$ and  
$X\in\mathrm{proj}_{\overline{\ccC}}(A_N)$, generate 
a non-trivial $\cC$-stable ideal in $\mathrm{proj}_{\overline{\ccC}}(A_N)$, 
contradicting simple transitivity of the action of 
$\cC$ on $\mathrm{proj}_{\overline{\ccC}}(A_N)$. The fact that the ideal
in question is non-trivial follows directly from nilpotency of the radical of any
finite dimensional algebra.
\end{proof}

\begin{corollary}\label{cor11}
For $\mathtt{i}\in \cC$, consider the 
endomorphism $2$-category $\cA$ of $\mathtt{i}$ in $\cC$ (in particular, 
$\cA(\mathtt{i},\mathtt{i})=\cC(\mathtt{i},\mathtt{i})$). Then there is a natural
bijection between the equivalence classes of simple transitive $2$-representations of
$\cA$ and the equivalence classes of simple transitive $2$-representations of
$\cC$ having a non-trivial value at $\mathtt{i}$.
\end{corollary}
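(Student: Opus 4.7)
The plan is to apply Corollary \ref{cor-72} together with Corollary \ref{cor9} simultaneously to $\cC$ and to the endomorphism $2$-category $\cA$, observing that both sides of the claimed bijection are classified by the same set of Morita equivalence classes of simple algebra $1$-morphisms in $\overline{\cC}(\mathtt{i},\mathtt{i})$.

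First, I would verify that $\cA$ is fiat in its own right: as $\ast\colon\cC\to\cC^{\,\mathrm{co,op}}$ must send $\mathbbm{1}_{\mathtt{i}}$ to $\mathbbm{1}_{\mathtt{i}}$, it fixes the object $\mathtt{i}$ and therefore restricts to a weak involution on $\cA$ whose adjunction $2$-morphisms automatically lie inside $\cC(\mathtt{i},\mathtt{i})=\cA(\mathtt{i},\mathtt{i})$. The definition of projective abelianization then gives the tautological identification of $2$-categories $\overline{\cA}(\mathtt{i},\mathtt{i})=\overline{\cC(\mathtt{i},\mathtt{i})}=\overline{\cC}(\mathtt{i},\mathtt{i})$, so an algebra $1$-morphism of $\overline{\cA}$ is literally the same datum as an algebra $1$-morphism of $\overline{\cC}$ supported in the $(\mathtt{i},\mathtt{i})$-hom-category.

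The natural map in one direction is restriction: send a simple transitive $2$-representation $\mathbf{M}$ of $\cC$ with $\mathbf{M}(\mathtt{i})\neq 0$ to the $\cA$-representation $\mathrm{Res}(\mathbf{M})$ on $\mathbf{M}(\mathtt{i})$. Transitivity of $\mathrm{Res}(\mathbf{M})$ follows from transitivity of $\mathbf{M}$: for any non-zero $X,Y\in\mathbf{M}(\mathtt{i})$, $Y$ is a direct summand of $\mathrm{F}\,X$ for some $1$-morphism $\mathrm{F}$ of $\cC$, and this forces $\mathrm{F}$ to have both source and target equal to $\mathtt{i}$, i.e., $\mathrm{F}\in\cA$. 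Picking any non-zero $N\in\mathbf{M}(\mathtt{i})$, the algebra $A_N$ of Corollary \ref{cor-72} is defined via morphism spaces in $\mathbf{M}(\mathtt{i})$ and $1$-morphisms in $\cC(\mathtt{i},\mathtt{i})$, so it is the same object of $\overline{\cC}(\mathtt{i},\mathtt{i})=\overline{\cA}(\mathtt{i},\mathtt{i})$ whether one computes it for $\mathbf{M}$ as a $\cC$-representation or for $\mathrm{Res}(\mathbf{M})$ as an $\cA$-representation. By Corollary \ref{cor9}, this common algebra is simple.

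The inverse map sends a simple transitive $\cA$-representation $\mathbf{M}'$ with associated simple algebra $A\in\overline{\cA}(\mathtt{i},\mathtt{i})=\overline{\cC}(\mathtt{i},\mathtt{i})$ to the $2$-representation $\mathrm{proj}_{\overline{\ccC}}(A)$ of $\cC$. The two composites are equivalent to the identity because, by Corollary \ref{cor-72}, a simple transitive $2$-representation on either side is recovered, up to equivalence, from the Morita equivalence class of its associated algebra, and that algebra is the same object in both classifications. The main obstacle is to ensure that the $2$-representation of $\cC$ produced from a simple $A\in\overline{\cC}(\mathtt{i},\mathtt{i})$ is genuinely simple transitive, rather than merely transitive, and dually that restriction preserves simple transitivity; both points reduce to the observation above that the associated algebra is literally the same object of $\overline{\cC}(\mathtt{i},\mathtt{i})$ on the two sides, so the simplicity criterion of Corollary \ref{cor9} transports between $\cC$ and $\cA$ without loss.
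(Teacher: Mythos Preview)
Your argument follows the same route as the paper's: verify that $\cA$ is fiat, observe that $\overline{\cA}(\mathtt{i},\mathtt{i})=\overline{\cC}(\mathtt{i},\mathtt{i})$ so that the algebra $A_N$ of Corollary~\ref{cor-72} is literally the same datum on both sides, and conclude that the two classifications coincide; your write-up is in fact more detailed than the paper's (which asserts bijectivity in a single sentence).

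One small caution on the point you yourself flag as the main obstacle: Corollary~\ref{cor9} only gives the implication ``$\mathbf{M}$ simple transitive $\Rightarrow$ $A_N$ simple'', not its converse, so invoking it does not by itself show that $\mathrm{proj}_{\overline{\ccC}}(A)$ is simple transitive when $A$ comes from a simple transitive $\cA$-representation, nor that $\mathrm{Res}(\mathbf{M})$ is simple transitive. The paper's proof does not spell this out either, so your argument is at the same level of rigor; a clean way to close the gap directly is to note that for any $\cA$-stable ideal $J$ of $\mathbf{M}(\mathtt{i})$, the $\cC$-ideal it generates restricts back to $J$ at $\mathtt{i}$ (any composite $\beta\circ(\mathrm{F}f)\circ\alpha$ landing in $\mathbf{M}(\mathtt{i})$ forces $\mathrm{F}\in\cA$), and then use transitivity together with $\mathbf{M}(\mathtt{i})\neq 0$ to see that a $\cC$-stable ideal contains a non-zero identity if and only if its $\mathtt{i}$-component does.
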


\begin{proof}
We start by noting that $\cA$ inherits 
the structure of a fiat $2$-category from $\cC$.
Now, consider a simple transitive $2$-representation 
$\mathbf{M}$ of $\cC$ such that 
$\mathbf{M}(\mathtt{i})\neq 0$. Then we have the corresponding algebra 
$1$-morphism $A_N$ 
in $\overline{\cC}$ given by 
Corollary \ref{cor-72}. By construction, $A_N$ is also 
in $\overline{\cA}$ and,
by Corollary \ref{cor-72}, thus gives rise to simple 
transitive $2$-representation of $\cA$. 
(This $2$-representation is just the 
restriction of $\mathbf{M}$ to $\cA$.) 
As any algebra $1$-morphism in $\overline{\cA}$ is, at the 
same time, an algebra $1$-morphism in 
$\overline{\cC}$, we obtain that the above 
correspondence is bijective.
\end{proof}

\subsection{Some remarks and a bonus observation}\label{s5.5}

\begin{remark}\label{remark13}
{\rm  
Let $\cC$ be a fiat $2$-category, $\mathbf{M}$ a transitive $2$-re\-pre\-sen\-ta\-tion of 
$\cC$ and $N$ an object in $\mathbf{M}(\mathtt{i})$. In all examples we know, 
there exists an algebra $1$-morphism $A$ in $\cC$ such that  the algebra $1$-morphism 
$A_N$ is a quotient $1$-morphism of $A$. However, we do not know whether this
property holds in full generality.
}
\end{remark}

The following bonus observation is inspired by \cite[Theorem 2]{KMMZ}.

\begin{proposition}\label{lem501}
Assume that $\cC$ is a fiat $2$-category,  $\mathbf{M}$ a simple transitive
$2$-rep\-re\-sen\-tation of $\cC$ and $N$ an object in $\mathbf{M}(\mathtt{i})$. 
Then $A_N$ is self-injective.
\end{proposition}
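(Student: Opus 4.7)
The plan is to translate self-injectivity of $A_N$ into the statement that $N$ is an injective object in the abelian category $\overline{\mathbf{M}}(\mathtt{i})$, and then to invoke the known self-injectivity of the endomorphism algebra of a simple transitive $2$-representation of a fiat $2$-category.

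First, I would apply Corollary \ref{cor-72}: the equivalence $\overline{\mathbf{M}} \simeq \mathrm{mod}_{\overline{\ccC}}(A_N)$ of $2$-representations of $\cC$ identifies the distinguished object $N \in \mathbf{M}(\mathtt{i}) \hookrightarrow \overline{\mathbf{M}}(\mathtt{i})$ with the regular right module $A_N \in \mathrm{mod}_{\overline{\ccC}}(A_N)$. Since this is, in particular, an equivalence of abelian categories, $A_N$ is self-injective, interpreted as the regular module being an injective object of $\mathrm{mod}_{\overline{\ccC}}(A_N)$, if and only if $N$ is an injective object of $\overline{\mathbf{M}}(\mathtt{i})$.

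Next, I would make $\overline{\mathbf{M}}(\mathtt{i})$ explicit. Pick a minimal additive generator $X$ of $\mathbf{M}(\mathtt{i})$ and set $B = \mathrm{End}_{\mathbf{M}(\mathtt{i})}(X)^{\mathrm{op}}$. Then the canonical embedding $\mathbf{M}(\mathtt{i}) \hookrightarrow \overline{\mathbf{M}}(\mathtt{i})$ corresponds to the usual inclusion $B\text{-}\mathrm{proj} \hookrightarrow B\text{-}\mathrm{mod}$, and $N$ corresponds to a projective $B$-module. Hence the injectivity of $N$ in $\overline{\mathbf{M}}(\mathtt{i})$ reduces to every indecomposable projective $B$-module being injective, i.e., to $B$ being a self-injective algebra.

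The crucial input is then that, for any simple transitive $2$-representation of a fiat $2$-category, the endomorphism algebra $B$ constructed in this way is always self-injective (in fact, weakly symmetric); this is a foundational structural result, see e.g.\ \cite{MM5}. Informally, fiatness of $\cC$ supplies each $1$-morphism with a two-sided adjoint, so the action of $\cC$ on $\overline{\mathbf{M}}(\mathtt{i}) \simeq B\text{-}\mathrm{mod}$ is exact and preserves both projectives and injectives; simple transitivity then forces the indecomposable projectives and the indecomposable injectives to lie in a common $\cC$-orbit, yielding their coincidence. Combining this with the previous paragraphs, $N$ is injective in $\overline{\mathbf{M}}(\mathtt{i})$, and therefore $A_N$ is self-injective. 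I expect the main obstacle to be the translation in the first paragraph: carefully identifying $A_N$ with $N$ under Corollary \ref{cor-72}, and confirming that self-injectivity of $A_N$ as an algebra $1$-morphism in $\overline{\cC}$ really matches injectivity of $A_N$ in $\mathrm{mod}_{\overline{\ccC}}(A_N)$. Once this identification is in place, the remainder follows from the cited structural result.
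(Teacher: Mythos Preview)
Your approach is correct and matches the paper's: both reduce self-injectivity of $A_N$, via the equivalence of Corollary~\ref{cor-72}, to the coincidence of projectives and injectives in $\overline{\mathbf{M}}$. The paper establishes the latter by adapting \cite[Theorem~2]{KMMZ} (rather than \cite{MM5}, where the structural result you invoke does not appear) to show that $1$-morphisms in the apex send simple objects to both projectives and injectives, which is a sharper version of the informal argument you sketch.
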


Before we give the proof of Proposition \ref{lem501}, recall the notion 
of an \textit{apex of a 
transitive $2$-representation}, given in \cite[Section 3]{CM}, which is the
maximal, with respect to the two-sided order, $\mathcal{J}$-cell which does
not annihilate the $2$-representation.

\begin{proof}
Let $\mathcal{J}$ be the apex 
of $\mathbf{M}$. Analogously as in \cite[Theorem 2]{KMMZ}, 
one can prove that
$1$-morphisms in 
$\mathcal{J}$ send simple objects in $\overline{\mathbf{M}}$ to projective objects.
A dual argument gives that $1$-morphisms in $\mathcal{J}$ 
send simple objects in $\overline{\mathbf{M}}$ to injective objects. The 
claim follows.
\end{proof}

\begin{remark}\label{remark:inj-proj}
We could alternatively have used $\underline{\mathbf{M}}$ instead
of $\overline{\mathbf{M}}$ in the proof of Proposition \ref{lem501}. As the
referee pointed out to us, it is an interesting question
whether $\underline{\mathbf{M}}$ and $\overline{\mathbf{M}}$ are equivalent 
(as abelian $2$-representations).
However, in full generality the answer is negative. For example,
if $\cC$ is not a fiat $2$-category, then 
$\underline{\mathbf{M}}$ and $\overline{\mathbf{M}}$ are not equivalent 
due to different sides of half-exactness
for the $2$-action of $\cC$ on them.
On the other hand, thanks to the assertion of Proposition \ref{lem501},
such an equivalence
seems to have a chance to be true in
case $\cC$ is fiat and $\mathbf{M}$ is simple transitive.
\end{remark}

\begin{remark}\label{remark:graded}
All results in this section can also 
be formulated and proven in the graded 
setup as, for example,
considered in \cite[Section 7]{MM3} 
or \cite[Section 3]{MT}.
\end{remark}


\section{Morita--Takeuchi theory for (co)algebra \texorpdfstring{$1$}{1}-morphisms}
\label{section:Morita-Takeuchi}

Morita theory \cite{Mo} explains that, under certain conditions, an equivalence 
of categories $\mathrm{mod}(A)\cong 
\mathrm{mod}(B)$, where $A$ and $B$ are algebras, can 
be given in terms of tensoring with an $A$-$B$ bimodule. 
Takeuchi \cite{Ta} ``dualized'' 
Morita theory for coalgebras, comodules and bicomodules. 
In this section, 
we explain how Morita(--Takeuchi) theory extends to (co)algebra, 
(co)module and bi(co)module $1$-morphisms in finitary and fiat 
$2$-categories.

\subsection{The bicomodule story}\label{s6.1}

Let $\cC$ be a finitary $2$-category and 
$A, B$ two coalgebra $1$-morphisms in $\underline{\cC}$. 
Suppose that 
$M={}_A M{}_B$ and $N={}_BN{}_A$ are bicomodule $1$-mor\-phisms 
in $\underline{\cC}$ over $A$ and $B$, with the left and 
right coaction $2$-morphisms on the indicated sides denoted 
by $\lambda_M, \lambda_N$ and $\rho_M,\rho_N$, respectively. 
Recall \cite[\S 0]{Ta} that the {\em cotensor product} $M\square_B N$ 
is defined as the kernel of the $2$-morphism  
\[
M \circ N \xrightarrow{\rho_M \circ 
\mathrm{id}_N - \mathrm{id}_M\circ \lambda_N} M \circ B\circ N.
\] 
This is an $A$-$A$ bicomodule $1$-morphism in $\underline{\cC}$. 
Similarly, one defines the $B$-$B$ bicomodule 
$1$-morphism $N\square_A M$.

The bicomodule $1$-morphisms $M$ and $N$ induce functors:  
\[
{}_- \;\square_A M \colon \mathrm{comod}_{\underline{\ccC}}(A) 
\to  \mathrm{comod}_{\underline{\ccC}}(B)\quad\text{and}\quad 
{}_-\;\square_B N   \colon \mathrm{comod}_{\underline{\ccC}}(B) \to  \mathrm{comod}_{\underline{\ccC}}(A).
\]
Given two $A$-$B$ bicomodule $1$-morphisms $M_1$ and $M_2$, 
there is a bijection 
\[
\left(\alpha\colon M_1 \to M_2\right)
\mapsto\left(
{}_-\; \square\; \alpha\colon {}_-\square_A M_1 \to {}_-\square_A M_2\right)
\]
between 
bicomodule $2$-morphisms and the associated 
natural transformations. 

It is easy to see that the cotensor product is associative 
(up to a canonical isomorphism) and that $\lambda_M$ and $\rho_M$ 
define $A$-$B$ bicomodule $2$-isomorphisms 
\[
M\overset{\cong}{\longrightarrow} A\;\square_{A} M\quad\text{and}\quad 
M \overset{\cong}{\longrightarrow}M\;\square_B B, 
\]
whose inverses are given by the counit $2$-morphisms. 

We say that a bicomodule $1$-morphism is {\em biinjective} 
if it is injective as a left comodule $1$-morphism and 
as a right comodule $1$-morphism (but 
not necessarily as a bicomodule $1$-morphism).  

\begin{theorem}\label{thm:Mor-Tak} 
As $2$-representations of 
$\underline{\cC}$, $\mathrm{comod}_{\underline{\ccC}}(A)$ and 
$\mathrm{comod}_{\underline{\ccC}}(B)$ 
are equivalent if and only if there exist biinjective 
bicomodule $1$-morphisms    
$M={}_A M_B$ and $N={}_BN_A$  in $\underline{\cC}$ and 
bicomodule $2$-isomorphisms 
\[
f\colon A \overset{\cong}{\longrightarrow} 
M\square_B N\quad\text{and}\quad g\colon B 
\overset{\cong}{\longrightarrow} N\square_A M, 
\]
such that the diagrams 
\begin{gather}\label{equation:squares}
\begin{aligned}
\xymatrix@C=12mm{
 M    \ar[r]^/-.25cm/{\rho_M}\ar[d]_{\lambda_M} 
 &       M\square_B B \ar[d]^{\mathrm{id}_M\square g}\\
A\square_A M    \ar[r]_/-.25cm/{f\square \mathrm{id}_M} 
& M\square_B N\square_A M
}
\quad\quad
\xymatrix@C=12mm{
 M    \ar[r]^/-.25cm/{\rho_N}\ar[d]_{\lambda_N} 
 &       N\square_A A \ar[d]^{\mathrm{id}_N\square f}\\
B\square_B N    \ar[r]_/-.25cm/{g\square \mathrm{id}_N} 
& N\square_A M\square_B N
}
\end{aligned}
\end{gather}
commute. 
\end{theorem}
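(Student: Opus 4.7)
The plan is to adapt Takeuchi's original proof \cite{Ta} to our $2$-categorical framework. The argument splits into a sufficiency direction (constructing an equivalence from $M, N, f, g$) and a necessity direction (reconstructing the data from a given equivalence).

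For the $(\Leftarrow)$ direction, I would define the functors
\[
\Phi := {}_-\square_A M\colon \mathrm{comod}_{\underline{\ccC}}(A)\to \mathrm{comod}_{\underline{\ccC}}(B),
\]
\[
\Psi := {}_-\square_B N\colon \mathrm{comod}_{\underline{\ccC}}(B)\to \mathrm{comod}_{\underline{\ccC}}(A).
\]
These are $\underline{\cC}$-linear because the $2$-action of $\underline{\cC}$ composes on the left of the underlying $1$-morphisms, whereas the cotensor product acts on the right coaction. Biinjectivity of $M$ and $N$ ensures that cotensor is exact in the relevant variable and is associative up to canonical $2$-isomorphism whenever one of the three factors is biinjective. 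Combining associativity with the counit $2$-isomorphisms $A\square_A X\cong X$ and $X\square_B B\cong X$ and the inverses of $f$ and $g$ then produces natural $2$-isomorphisms $\Psi\Phi\cong \mathrm{id}$ and $\Phi\Psi\cong \mathrm{id}$. The commutativity of the two squares in \eqref{equation:squares} is exactly what is needed to verify the triangle identities for the resulting adjoint equivalence $(\Phi,\Psi)$.

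For the $(\Rightarrow)$ direction, suppose $\Phi$ and $\Psi$ form a quasi-inverse pair of $\underline{\cC}$-linear equivalences. I would set $M := \Psi(B)$ and $N := \Phi(A)$. Note that $A$ is an injective right $A$-comodule because $\mathrm{Hom}_{\mathrm{comod}_{\underline{\ccC}}(A)}({}_-,A)\cong \mathrm{Hom}_{\underline{\ccC}}({}_-,\mathbbm{1})$ (by the analog of Lemma \ref{lem5}) and $\mathbbm{1}$ is injective in $\underline{\cC}$ by construction of the injective abelianization. Since $\Phi$ and $\Psi$ transport injectives to injectives, $M$ and $N$ are injective on their respective right sides. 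To equip $M = \Psi(B)$ with a commuting left $A$-coaction, I would observe that the comultiplication on $A$ makes $A$ a left $A$-comodule object in the category of right $A$-comodules; transporting this structure by $\Psi$ yields a left $A$-coaction on $M$. A symmetric construction provides the other three coactions, and biinjectivity on the remaining side is inherited from injectivity of $A$ and $B$ as comodules over themselves. The isomorphisms $f$ and $g$ then arise as the unit and counit of the adjoint equivalence $(\Phi,\Psi)$ evaluated at $A$ and $B$, using the canonical identifications of $\Psi\Phi$ with ${}_-\square_A(M\square_B N)$ and of $\Phi\Psi$ with ${}_-\square_B(N\square_A M)$. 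The coherence squares in \eqref{equation:squares} then reduce precisely to the zigzag identities of the adjoint equivalence.

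The main obstacle is the identification $\Psi\Phi\cong {}_-\square_A(M\square_B N)$, which is a $2$-categorical Eilenberg--Watts statement: any $\underline{\cC}$-linear endofunctor on $\mathrm{comod}_{\underline{\ccC}}(A)$ with appropriate left exactness should be given by cotensor with a bicomodule. Establishing this requires assembling the bicomodule structures carefully and using the internal hom machinery of Section \ref{s5.1} together with the fact that $A$ is an injective cogenerator, so that the functor is determined by its value at $A$.
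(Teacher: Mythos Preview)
Your overall strategy coincides with the paper's: both directions are handled by transporting Takeuchi's classical argument \cite{Ta} to the $2$-categorical setting, with the Eilenberg--Watts type statement (any $\underline{\cC}$-linear exact functor $\mathrm{comod}_{\underline{\ccC}}(A)\to\mathrm{comod}_{\underline{\ccC}}(B)$ is naturally isomorphic to cotensoring with its value on $A$) as the crux of the ``only if'' direction. The paper dispatches this step by observing that Takeuchi's proof of \cite[Proposition~2.1]{Ta} goes through verbatim once ``preserves direct sums'' is replaced by ``intertwines the $\underline{\cC}$-action''; your closing paragraph correctly isolates this as the main point.

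However, in your $(\Rightarrow)$ paragraph you have interchanged the roles of $M$ and $N$. With $\Phi\colon\mathrm{comod}_{\underline{\ccC}}(A)\to\mathrm{comod}_{\underline{\ccC}}(B)$ one must set $M:=\Phi(A)$ and $N:=\Psi(B)$, not the reverse: indeed, in the $(\Leftarrow)$ direction you yourself have $\Phi={}_-\square_A M$, whence $\Phi(A)\cong A\square_A M\cong M$. As written, your $M=\Psi(B)$ lands in $\mathrm{comod}_{\underline{\ccC}}(A)$ and is therefore a right $A$-comodule, whereas $M={}_AM_B$ is required to be a right $B$-comodule; the cotensor $M\square_B N$ is then not even defined. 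The subsequent sentence about obtaining the left $A$-coaction on $M=\Psi(B)$ by ``transporting the comultiplication on $A$ via $\Psi$'' is accordingly incoherent, since $\Psi$ is applied to $B$, not to $A$. The correct mechanism (and the one the paper uses, following Takeuchi) is: the comultiplication $A\to A\circ A$ is a morphism of right $A$-comodules; applying $\Phi$ and using that $\Phi$ intertwines the left $\underline{\cC}$-action yields $\Phi(A)\to A\circ\Phi(A)$, which is the desired left $A$-coaction on $M=\Phi(A)$. Once this swap is corrected, your outline matches the paper's proof.
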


\begin{proof}
The ``if'' part is clear, since the 
equivalence $\mathrm{comod}_{\underline{\ccC}}(A)
\cong\mathrm{comod}_{\underline{\ccC}}(B)$ 
is defined by ${}_-\; \square_A M$ and 
its inverse ${}_-\; \square_B N$. 

Next, we prove the ``only if'' part. 
This follows as in \cite[\S 2 and \S 3]{Ta}, 
with only minor changes in the details of the proofs. 
Let 
$F\colon \mathrm{comod}_{\underline{\ccC}}(A) 
\to \mathrm{comod}_{\underline{\ccC}}(B)$ be an exact 
functor which intertwines the 
two $2$-actions of $\underline{\cC}$. Then $F(A)$ is an 
$A$-$B$ bicomodule $1$-morphism and  
$F({}_-)\cong{}_-\; \square_A F(A)$, following \cite[Proposition 2.1]{Ta}. 
Note that the proof is identical, except that we replace the 
preservation of direct sums by the intertwining property in the 
first sentence of Takeuchi's proof. 
Similarly, if $G$ is the inverse of $F$, then $G(B)$ is an $B$-$A$ 
bicomodule $1$-morphism and $G({}_-)\cong {}_-\; \square_B G(B)$. 

The natural isomorphisms 
\[
\mathrm{id}_{\mathrm{comod}_{\underline{\ccC}}(B)} 
\overset{\cong}{\longrightarrow} FG
\quad\text{and}\quad 
\mathrm{id}_{\mathrm{comod}_{\underline{\ccC}}(A)} 
\overset{\cong}{\longrightarrow} GF
\] 
give rise to $2$-isomorphisms  
$f\colon A \to F(A)\square_B G(B)$ and 
$g\colon B\to G(B)\square_A F(A)$ such that the 
squares in \eqref{equation:squares} commute. The 
same natural isomorphisms imply that $F$ and $G$ are biadjoint, 
so both functors are exact and send injectives to injectives. 
Therefore, $F(A)$ and $G(B)$ are biinjective 
(cf. \cite[Theorem 2.5]{Ta}).  
\end{proof}

The following corollary follows directly from 
Theorems \ref{thm7} and \ref{thm:Mor-Tak}.

\begin{corollary}\label{cor:Mor-Tak}
Let $\cC$ be a finitary $2$-category and 
$\mathbf{M}$ a transitive 
$2$-representation of $\cC$. 
Furthermore, let $N_1,N_2\in \mathbf{M}(\mathtt{i})$ 
be two objects as in Theorem \ref{thm7}. 
Then $\underline{\mathrm{Hom}}(N_1,N_2)$ is a 
biinjective  
$\underline{\mathrm{Hom}}(N_2,N_2)$-$\underline{\mathrm{Hom}}(N_1,N_1)$ 
bicomodule $1$-morphism in $\underline{\cC}$ and 
$\underline{\mathrm{Hom}}(N_2,N_1)$ is a 
biinjective $\underline{\mathrm{Hom}}(N_1,N_1)$-$\underline{\mathrm{Hom}}(N_2,N_2)$ 
bicomodule $1$-morphism in $\underline{\cC}$, such that 
\begin{align*}
\underline{\mathrm{Hom}}(N_1,N_1)&\cong 
\underline{\mathrm{Hom}}(N_2,N_1)
\square_{\underline{\mathrm{Hom}}(N_2,N_2)} \underline{\mathrm{Hom}}(N_1,N_2),\\
\underline{\mathrm{Hom}}(N_2,N_2)&\cong
\underline{\mathrm{Hom}}(N_1,N_2)
\square_{\underline{\mathrm{Hom}}(N_1,N_1)} \underline{\mathrm{Hom}}(N_2,N_1).
\end{align*}
\end{corollary}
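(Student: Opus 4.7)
The plan is to apply Theorem \ref{thm7} to both $N_1$ and $N_2$, compose the two resulting equivalences, and then feed the outcome into Theorem \ref{thm:Mor-Tak} to extract the desired bicomodule data. (Note that Theorem \ref{thm7} assumes $\cC$ to be fiat, so we work under that assumption.) By Theorem \ref{thm7} and Lemma \ref{lem3}, for $i=1,2$ we obtain equivalences of $2$-representations of $\cC$
\[
\Theta_i\colon \underline{\mathbf{M}}\xrightarrow{\;\sim\;}\mathrm{comod}_{\underline{\ccC}}(A^{N_i}),\qquad M\mapsto \underline{\mathrm{Hom}}(N_i,M),
\]
and, in view of the component-wise $\underline{\cC}$-action defined in Section \ref{s4.4} together with the natural isomorphism \eqref{eq2-1} of Lemma \ref{lemn2}, these promote to equivalences of $2$-representations of $\underline{\cC}$. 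Composing one with a quasi-inverse of the other produces an equivalence of $2$-representations of $\underline{\cC}$
\[
F:=\Theta_2\circ\Theta_1^{-1}\colon \mathrm{comod}_{\underline{\ccC}}(A^{N_1})\xrightarrow{\;\sim\;}\mathrm{comod}_{\underline{\ccC}}(A^{N_2}),
\]
with quasi-inverse $G:=\Theta_1\circ\Theta_2^{-1}$.

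I would then invoke Theorem \ref{thm:Mor-Tak} on $F$ and $G$ to obtain biinjective bicomodules $F(A^{N_1})$ and $G(A^{N_2})$ together with bicomodule $2$-isomorphisms
\[
A^{N_1}\cong F(A^{N_1})\,\square_{A^{N_2}}\,G(A^{N_2}),\qquad A^{N_2}\cong G(A^{N_2})\,\square_{A^{N_1}}\,F(A^{N_1}).
\]
Since $\Theta_i(N_i)=A^{N_i}$ by construction, a direct chase of definitions identifies
\[
F(A^{N_1})=\Theta_2(N_1)=\underline{\mathrm{Hom}}(N_2,N_1)\quad\text{and}\quad G(A^{N_2})=\Theta_1(N_2)=\underline{\mathrm{Hom}}(N_1,N_2).
\]
Substituting these into the cotensor isomorphisms above yields precisely the two isomorphisms stated in the corollary, and the biinjectivity assertions are exactly what Theorem \ref{thm:Mor-Tak} delivers.

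The step I expect to require the most care is the verification that the bicomodule structures on $F(A^{N_1})$ and $G(A^{N_2})$ furnished abstractly by Morita--Takeuchi theory coincide with the natural bicomodule structures on the internal homs $\underline{\mathrm{Hom}}(N_2,N_1)$ and $\underline{\mathrm{Hom}}(N_1,N_2)$ induced by the representability isomorphism \eqref{eq2-1} and the coevaluation maps of Lemma \ref{lem2}. The right coactions match tautologically, since $F$ and $G$ land in the respective right comodule categories. For the left coactions, the Morita--Takeuchi construction transports the comultiplication $\Delta\colon A^{N_1}\to A^{N_1}\circ A^{N_1}$ through $F$, which, via $\Theta_1^{-1}$, pulls back to the coevaluation $N_1\to A^{N_1}\circ N_1$ in $\underline{\mathbf{M}}$ and is then pushed forward by $\Theta_2$; unwinding the adjunction \eqref{eq2-1} identifies the result with the natural left $A^{N_1}$-coaction on $\underline{\mathrm{Hom}}(N_2,N_1)$, and symmetrically for the other bicomodule. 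This coherence check is routine but is exactly the place where the upgrade of the $\Theta_i$ from $\cC$-equivariance to $\underline{\cC}$-equivariance becomes essential.
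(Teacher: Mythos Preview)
Your proposal is correct and follows exactly the route the paper intends: the paper's own proof is the single sentence ``follows directly from Theorems~\ref{thm7} and~\ref{thm:Mor-Tak}'', and you have faithfully unpacked precisely that, including the identification $F(A^{N_1})\cong\underline{\mathrm{Hom}}(N_2,N_1)$ via $\Theta_1^{-1}(A^{N_1})\cong N_1$. Your parenthetical remark that fiatness of $\cC$ is really needed (despite the corollary saying ``finitary''), and your flagging of the coherence between the Morita--Takeuchi bicomodule structure on $F(A^{N_1})$ and the intrinsic one on $\underline{\mathrm{Hom}}(N_2,N_1)$, are both legitimate points that the paper leaves implicit.
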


\subsection{The bimodule story}\label{s6.2}

As already noted in Remark \ref{remark:from-co-to-usual}, if $\cC$ is fiat, 
the involution ${}^*$ on $\cC$ gives rise to an 
equivalence 
between $\underline{\cC}$ and $\overline{\cC}$, 
which sends injective $1$-morphisms to projective 
$1$-morphisms. Using this fact, 
we see that Theorem \ref{thm:Mor-Tak} implies a ``dual 
theorem'' for algebra $1$-morphisms in $\overline{\cC}$, which 
we state below without further proof.

For this purpose, 
let $M$ be an $A$-$B$ bimodule $1$-morphism in $\overline{\cC}$, 
with left action $2$-mor\-phism $\lambda_M\colon A\circ M\to M$ 
and right action $2$-morphism $\rho_M\colon M\circ B \to M$. Similarly, let $N$ 
be a $B$-$A$ bimodule $1$-morphism. As before, 
define $M\circ_B N$ to be the cokernel of the $2$-morphism  
\[
M \circ B\circ N \xrightarrow{\rho_M \circ \mathrm{id}_N - \mathrm{id}_M\circ \lambda_N} M \circ N.
\] 
Define $N\circ_A M$ similarly. 
Note that $\lambda_M$ and $\rho_M$ 
descend to isomorphisms
\[
A\circ _A M\overset{\cong}{\longrightarrow} M\quad\text{and}\quad 
M\circ_B B\overset{\cong}{\longrightarrow} M,
\]
respectively. The same holds for 
$\lambda_N$ and $\rho_N$, of course.  

We say that a bimodule $1$-morphism is 
{\em biprojective} if it 
is projective as a left module $1$-morphism 
and as a right module $1$-morphism (but not 
necessarily as a bimodule $1$-morphism). 

\begin{theorem}\label{thm:Mor-Tak2}  
Let $\cC$ be a fiat $2$-category and $A$ and $B$ two algebra $1$-morphisms in 
$\overline{\cC}$. As $2$-representations of 
$\overline{\cC}$,  $\mathrm{mod}_{\overline{\ccC}}(A)$ and 
$\mathrm{mod}_{\overline{\ccC}}(B)$ are equivalent if and 
only if there exist biprojective bimodule $1$-morphisms    
$M={}_A M_B$ and $N={}_BN_A$ in $\overline{\cC}$ and 
bimodule $2$-isomorphisms 
\[
f\colon M\circ_B N\overset{\cong}{\longrightarrow} A 
\quad\text{and}\quad 
g\colon N\circ_A M\overset{\cong}{\longrightarrow} B, 
\]
such that the diagrams  

\[
\xymatrix@C=12mm{
 M\circ_B N\circ_A M    
 \ar[r]^/.25cm/{f\circ \mathrm{id}_M}\ar[d]_{\mathrm{id}_M\circ g} 
 &       A\circ_A M \ar[d]^{\lambda_M}\\
M\circ_B B    \ar[r]_/.25cm/{\rho_M} & M
}
\quad\quad
\xymatrix@C=12mm{
 N\circ_A M\circ_B N    
 \ar[r]^/.25cm/{g\circ \mathrm{id}_N}\ar[d]_{\mathrm{id}_N\circ f} 
 &        B\circ_B N \ar[d]^{\lambda_N}\\
N\circ_A A    \ar[r]_/.25cm/{\rho_N} & N
}
\]
commute. \qed
\end{theorem}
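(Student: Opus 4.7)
The plan is to deduce Theorem \ref{thm:Mor-Tak2} from Theorem \ref{thm:Mor-Tak} by transporting everything through the contravariant biequivalence $D\colon \overline{\cC}\to \underline{\cC}$ induced by the weak involution ${}^*$ of the fiat $2$-category $\cC$, as recorded in Remark \ref{remark:from-co-to-usual}\eqref{remark:from-co-to-usual-1}. Since $D$ reverses both $1$- and $2$-morphisms, I would first verify the relevant translation dictionary: an algebra $1$-morphism $A$ in $\overline{\cC}$, with multiplication $\mu\colon A\circ A\to A$ and unit $\eta\colon\mathbbm{1}_{\mathtt{i}}\to A$, is sent by $D$ to a coalgebra $1$-morphism $D(A)$ in $\underline{\cC}$ with comultiplication $D(\mu)$ and counit $D(\eta)$; an $A$-$B$ bimodule $1$-morphism ${}_A M_B$ becomes a $D(B)$-$D(A)$ bicomodule $1$-morphism $D(M)$, with the action $2$-morphisms $\lambda_M, \rho_M$ passing to coaction $2$-morphisms in the opposite direction; and biprojectivity translates to biinjectivity because $D$ exchanges projective and injective $1$-morphisms (as noted in Section \ref{section:Morita-Takeuchi}).

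Next I would check that $D$ interchanges cotensor and tensor products. The cotensor product $M\square_B N$ is defined as the kernel of a certain parallel pair of $2$-morphisms, while $M\circ_B N$ is defined as the cokernel of the analogous pair; because $D$ reverses $2$-morphisms, it converts kernels to cokernels, and one obtains a natural $2$-isomorphism
\[
D(M\circ_B N)\cong D(N)\,\square_{D(B)}\,D(M).
\]
Using this intertwining, $2$-representations of $\overline{\cC}$ on module categories correspond to $2$-representations of $\underline{\cC}$ on comodule categories, and the functors ${}_-\,\circ_A M$ and ${}_-\,\square_{D(A)} D(M)$ correspond under $D$.

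With the dictionary in hand, the equivalence $\mathrm{mod}_{\overline{\ccC}}(A)\simeq \mathrm{mod}_{\overline{\ccC}}(B)$ as $2$-representations of $\overline{\cC}$ corresponds bijectively, via $D$, to an equivalence $\mathrm{comod}_{\underline{\ccC}}(D(A))\simeq \mathrm{comod}_{\underline{\ccC}}(D(B))$ as $2$-representations of $\underline{\cC}$. Applying Theorem \ref{thm:Mor-Tak} to $D(A)$ and $D(B)$ furnishes biinjective bicomodule $1$-morphisms and $2$-iso\-mor\-phisms $f_0\colon D(A)\to D(M)\square_{D(B)}D(N)$, $g_0\colon D(B)\to D(N)\square_{D(A)}D(M)$ satisfying the diagrams \eqref{equation:squares}. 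Applying $D^{-1}$ reverses the direction of these $2$-isomorphisms, yielding the required $f\colon M\circ_B N\to A$ and $g\colon N\circ_A M\to B$; the commutative squares \eqref{equation:squares} are sent to the two commutative squares in the statement, with coactions $\lambda,\rho$ on the coalgebra side transforming into actions $\lambda,\rho$ on the algebra side.

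The only real obstacle, and the step that requires the most care, is the bookkeeping for all the direction reversals and the order swaps under $D$: one must verify that the left/right sides of bimodules are exchanged consistently, that the identifications $A\circ_A M\cong M$ and $M\circ_B B\cong M$ correspond precisely to the inverses of $\lambda_M,\rho_M$ on the coalgebra side, and that after applying $D^{-1}$ to \eqref{equation:squares} the resulting squares are literally the ones displayed in Theorem \ref{thm:Mor-Tak2}. Once this bookkeeping is carried out, both directions of the equivalence follow formally from Theorem \ref{thm:Mor-Tak}.
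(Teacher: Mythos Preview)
Your proposal is correct and follows exactly the approach the paper takes: the paper explicitly states that Theorem \ref{thm:Mor-Tak2} is obtained from Theorem \ref{thm:Mor-Tak} by transporting through the contravariant biequivalence $\overline{\cC}\simeq\underline{\cC}$ coming from the weak involution ${}^*$ (Remark \ref{remark:from-co-to-usual}\eqref{remark:from-co-to-usual-1}), and accordingly presents the theorem ``without further proof''. Your dictionary (algebras $\leftrightarrow$ coalgebras, bimodules $\leftrightarrow$ bicomodules with sides swapped, $\circ_B\leftrightarrow\square_{D(B)}$, biprojective $\leftrightarrow$ biinjective) is precisely what is needed, and the bookkeeping caveat you flag is the only thing to be careful about.
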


\begin{remark}\label{remark:graded2}
Again, all results above admit 
generalizations to the graded setup.
\end{remark}

An example of Morita(--Takeuchi) equivalent (co)algebra $1$-morphisms 
is given later on in Example \ref{example:MT-theory}.

\section{Constructing (co)algebra \texorpdfstring{$1$}{1}-morphisms using idempotents}\label{s7}

\subsection{Adjunctions and (co)monads}\label{s7.1}

Recall the following (see, e.g. \cite{ML}):

Let $(F,G)$ be a pair of adjoint 
functors $F\colon \mathcal{C}\to\mathcal{D}$ 
and $G\colon \mathcal{D}\to\mathcal{C}$,
for two categories $\mathcal{C}$ and $\mathcal{D}$. 
Let $\eta\colon \mathbbm{1}_{\mathcal{C}}\to GF$
and
$\epsilon\colon FG\to \mathbbm{1}_{\mathcal{D}}$ 
be the 
corresponding unit and counit of the adjunction.
Then the composition $GF$ carries the 
natural structure of a monad given by
$\eta$ and $\mathrm{id}_G\circ_0\epsilon\circ_0\mathrm{id}_F\colon GFGF\to GF$.
Further, the composition $FG$ carries the natural 
structure of a comonad given by
$\epsilon$ and $\mathrm{id}_F\circ_0\eta\circ_0\mathrm{id}_G\colon FG\to FGFG$.
In particular, $GF$ has the natural structure of
an algebra $1$-morphism in the $2$-category of endofunctors of $\mathcal{C}$, and $FG$ has the natural 
structure of a coalgebra $1$-morphism in the $2$-category of endofunctors of 
$\mathcal{D}$.

\subsection{(Co)algebra \texorpdfstring{$1$}{1}-morphisms for projective bimodules}\label{s7.2}

Let $A$ be a finite dimensional algebra 
and $A\text{-}\mathrm{mod}\text{-}A$ the 
bicategory of all $A$-$A$-bimodules (with respect to tensoring over $A$). 
Abusing notation, we will also denote by 
$A\text{-}\mathrm{mod}\text{-}A$ its strictification.
We also use the notation $A\text{-}\mathrm{mod}$ 
and $\mathrm{mod}\text{-}A$ in the evident way.

For an idempotent $e\in A$, consider the 
$A$-$\mathbbm{k}$-bimodule $Ae$ which 
defines an exact functor
\[
F=Ae\otimes_{\mathbbm{k}}{}_-\colon
\mathbbm{k}\text{-}\mathrm{mod}\to A\text{-}\mathrm{mod}.
\]
By the usual tensor-hom adjunction, the 
right adjoint of $F$ is the functor 
\[
G=\mathrm{Hom}_A(Ae,{}_-)\colon  A
\text{-}\mathrm{mod}\to \mathbbm{k}\text{-}\mathrm{mod}.
\]
The functor $G$ is exact and is isomorphic to the functor 
\[
G'=eA\otimes_A{}_-\colon  A
\text{-}\mathrm{mod}\to \mathbbm{k}\text{-}\mathrm{mod}.
\]
The morphisms corresponding to the counit and the unit of the pair $(F,G)$ are 
\[
\varepsilon\colon Ae\otimes_{\mathbbm{k}}eA\to A 
\]
given by multiplication and
\[
\eta\colon \mathbbm{k}\to eA\otimes_{A}Ae,
\quad 1\mapsto e\otimes e.
\]

The exact functor $F$ has a ``left exact'' 
representation via an isomorphic functor
\[
F'=\mathrm{Hom}_{\mathbbm{k}}(\mathrm{Hom}_{\mathbbm{k}}(Ae,\mathbbm{k}),{}_-)\colon
\mathbbm{k}\text{-}\mathrm{mod}\to A\text{-}\mathrm{mod}.
\] 
Therefore, $F$ has a left adjoint given 
by the functor
\[
H= \mathrm{Hom}_{\mathbbm{k}}(Ae,\mathbbm{k})\otimes_{A}{}_-\colon A\text{-}\mathrm{mod}\to \mathbbm{k}\text{-}\mathrm{mod}.
\]
The morphisms corresponding to the counit 
and the unit of the pair  $(H,F)$ are the evaluation morphism
\[
\varepsilon'\colon \mathrm{Hom}_{\mathbbm{k}}(Ae,\mathbbm{k})\otimes_{A}Ae\to \mathbbm{k},
\quad  
\beta\otimes a\mapsto \beta(a)  
\]
and the coevaluation morphism
\[
\eta'\colon A\to Ae\otimes_{\mathbbm{k}}\mathrm{Hom}_{\mathbbm{k}}(Ae,\mathbbm{k}),
\quad 1\mapsto {\textstyle\sum_i} a_i\otimes a_i^*,
\]
where $\{a_i\}$ is some fixed basis of 
$Ae$ and $\{a_i^*\}$ is the corresponding dual 
basis of 
the $\mathbbm{k}$-vector space $\mathrm{Hom}_{\mathbbm{k}}(Ae,\mathbbm{k})$.

To state the next proposition,
recall that an object $A$ which is both an algebra and a coalgebra object,
is called \textit{Frobenius} provided that comultiplication 
is a homomorphism of $A$-$A$-bimodules, see 
e.g. \cite[Definition 7.20.3]{EGNO} 
and also \cite{Mu,SF}. Henceforth, we use the 
notion of a \textit{Frobenius $1$-morphism}.

\begin{proposition}\label{prop31}
In the above setup we have: 
\begin{enumerate}[$($i$)$]
\item\label{prop31.1} The $A$-$A$-bimodule 
$Ae\otimes_{\mathbbm{k}}eA$ has the structure of
a coalgebra $1$-morphism.
\item\label{prop31.2} If $eA\cong \mathrm{Hom}_{\mathbbm{k}}(Ae,\mathbbm{k})$ 
in $\mathrm{mod}\text{-}A$, then
the $A$-$A$-bimodule $Ae\otimes_{\mathbbm{k}}eA$ has the structure 
of an algebra $1$-morphism.
\item\label{prop31.3} In the setup of \eqref{prop31.2}, 
the $A$-$A$-bimodule $Ae\otimes_{\mathbbm{k}}eA$ has the structure of 
a Frobenius $1$-morphism.
\end{enumerate}
\end{proposition}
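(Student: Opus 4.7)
The plan is to handle the three parts in sequence, reducing each to the formalism of Section~\ref{s7.1} and then, for (iii), verifying the Frobenius identity by a direct computation.

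For part (i), I first observe that $FG$ is naturally isomorphic to the endofunctor of $A\text{-}\mathrm{mod}$ given by tensoring with the $A$-$A$-bimodule $Ae\otimes_{\mathbbm{k}}eA$, using the identification $G\cong G'=eA\otimes_A{-}$ from Section~\ref{s7.2}. Section~\ref{s7.1} then endows $FG$ with the structure of a comonad, and hence $Ae\otimes_{\mathbbm{k}}eA$ with the structure of a coalgebra $1$-morphism in $A\text{-}\mathrm{mod}\text{-}A$. Unwinding the definitions, the counit is the multiplication $ae\otimes eb\mapsto aeb$ coming from $\varepsilon$, and the comultiplication, induced by $\eta$, is the map $ae\otimes eb\mapsto(ae\otimes e)\otimes_A(e\otimes eb)$.

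For part (ii), the hypothesis $eA\cong\mathrm{Hom}_{\mathbbm{k}}(Ae,\mathbbm{k})$ of right $A$-modules identifies $G'$ with $H$, so the right adjoint of $F$ becomes isomorphic to its left adjoint; equivalently, $(F,G)$ forms a biadjoint pair. Applying Section~\ref{s7.1} to the second adjoint pair $H\dashv F$ (with $H$ in the role of ``$F$'' and $F$ in the role of ``$G$'') endows $FH\cong FG$ with the structure of a monad, and hence $Ae\otimes_{\mathbbm{k}}eA$ with the structure of an algebra $1$-morphism. Using the balanced pairing $\langle-,-\rangle\colon eA\otimes_A Ae\to\mathbbm{k}$ coming from the isomorphism and a chosen dual basis, one reads off that the unit is $1\mapsto\sum_i a_i\otimes a_i^*$ and the multiplication sends $(x_1\otimes y_1)\otimes_A(x_2\otimes y_2)$ to $\langle y_1,x_2\rangle\,x_1\otimes y_2$.

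For part (iii), what needs checking is that the comultiplication $\Delta$ from (i) is a morphism of $C$-$C$-bimodules with respect to the algebra structure from (ii), where $C=Ae\otimes_{\mathbbm{k}}eA$. Given the explicit formulas, this reduces to a short direct calculation verifying the Frobenius identities $(\mathrm{id}_C\otimes\mu)\circ(\Delta\otimes\mathrm{id}_C)=\Delta\circ\mu=(\mu\otimes\mathrm{id}_C)\circ(\mathrm{id}_C\otimes\Delta)$: both $\Delta\circ\mu$ and $(\mathrm{id}_C\otimes\mu)\circ(\Delta\otimes\mathrm{id}_C)$ send $(x_1\otimes y_1)\otimes_A(x_2\otimes y_2)$ to $\langle y_1,x_2\rangle(x_1\otimes e)\otimes_A(e\otimes y_2)$, and symmetrically on the other side. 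Conceptually this is an instance of the general principle that a biadjoint pair produces a Frobenius algebra; the main (minor) obstacle is bookkeeping the $\otimes_{\mathbbm{k}}$ versus $\otimes_A$ distinctions and the $A$-balancedness of the pairing, but once these are in place the verification is essentially formal.
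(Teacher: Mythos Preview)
Your proposal is correct and follows essentially the same strategy as the paper: parts \eqref{prop31.1} and \eqref{prop31.2} are reduced to the (co)monad formalism of Section~\ref{s7.1} applied to the adjunctions $(F,G)$ and $(H,F)$ respectively, and part \eqref{prop31.3} is checked by a direct computation. The only cosmetic difference is that, for \eqref{prop31.3}, the paper strips off the outer $F$ and $G\cong H$ and verifies the resulting identities between $\eta$ and $\varepsilon'$ on $(Ae)^*\otimes_A Ae$, whereas you verify the Frobenius identities $(\mathrm{id}\otimes\mu)\circ(\Delta\otimes\mathrm{id})=\Delta\circ\mu=(\mu\otimes\mathrm{id})\circ(\mathrm{id}\otimes\Delta)$ directly on $C=Ae\otimes_{\mathbbm{k}}eA$; these are the same computation up to sandwiching by the outer tensor factors.
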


\begin{proof}
Claim \eqref{prop31.1} follows 
from the comonad discussion in Section \ref{s7.1} 
applied to the 
pair $(F,G)$ of adjoint functors. Claim \eqref{prop31.2} 
follows from the monad discussion in
Section \ref{s7.1} applied to the pair $(H,F)$ 
of adjoint functors.

To prove claim \eqref{prop31.3}, we have to check 
the Frobenius condition, that is 
compatibility of the algebra and coalgebra structures.
This reduces to two commutative diagrams. 
For simplicity, we write $(Ae)^*$ for 
$\mathrm{Hom}_{\mathbbm{k}}(Ae,\mathbbm{k})$.
Then the first diagram is
\[
\xymatrix@C1.25cm{
(Ae)^*\otimes_A Ae\ar[d]_{\varepsilon'}
\ar[rr]^/-.6cm/{\eta\otimes\mathrm{id}_{(Ae)^*\otimes_A Ae}}&&
(Ae)^*\otimes_A Ae\otimes_{\Bbbk}(Ae)^*\otimes_A Ae
\ar[d]^{\mathrm{id}_{(Ae)^*\otimes_A Ae}\otimes \varepsilon'}\\
\Bbbk\ar[rr]_/-.6cm/{\eta}&&(Ae)^*\otimes_A Ae
}
\]
and its commutativity is checked, using definitions, by the computation
\[
\xymatrix{
\beta\otimes a\ar[rr]\ar[d] && \Phi(e)\otimes e \otimes \beta\otimes a\ar[d]\\
\beta(a)\ar[rr]&&\beta(a)\Phi(e)\otimes e,
} 
\]
where $\Phi\colon eA\overset{\cong}{\longrightarrow} (Ae)^*$ is a fixed isomorphism.

The second diagram is
\[
\xymatrix@C1.25cm{
(Ae)^*\otimes_A Ae\ar[d]_{\varepsilon'}
\ar[rr]^/-.6cm/{\mathrm{id}_{(Ae)^*\otimes_A Ae}\otimes\eta}&&
(Ae)^*\otimes_A Ae\otimes_{\Bbbk}(Ae)^*\otimes_A Ae
\ar[d]^{\varepsilon'\otimes \mathrm{id}_{(Ae)^*\otimes_A Ae}}\\
\Bbbk\ar[rr]_/-.6cm/{\eta}&&(Ae)^*\otimes_A Ae
}
\]
and its commutativity is checked, using definitions, by the computation
\[
\xymatrix{
\beta\otimes a\ar[rr]\ar[d] && \beta\otimes a\ar[d]\otimes\Phi(e)\otimes e  \\
\beta(a)\ar[rr]&&\beta(a)\Phi(e)\otimes e.
}
\]
This completes the proof of the proposition.
\end{proof}

\subsection{Duflo involutions as (co)algebra \texorpdfstring{$1$}{1}-morphism}\label{s7.3}

Let $\cC$ be a fiat $2$-category and $\mathcal{J}$ a two-sided cell in $\cC$.
Assume that $\cC$ is 
$\mathcal{J}$-simple and that $\mathcal{J}$ is 
strongly regular. Let $\mathcal{L}$ be a left cell 
in $\mathcal{J}$ and $\mathrm{G}$ be the
\textit{Duflo involution} in $\mathcal{L}$. (For these notions, 
see \cite[Proposition 17]{MM1}, \cite[Propositions 27 and 28]{MM6} and Section \ref{s2.1}.)

\begin{theorem}\label{thm32}
In the above setting, we have:
\begin{enumerate}[$($i$)$]
\item\label{thm32.1} The Duflo involution $\mathrm{G}$ is an algebra 
$1$-morphism in $\cC$. The corresponding $2$-rep\-re\-sen\-ta\-ti\-on 
$\mathrm{proj}_{\overline{\ccC}}(\mathrm{G})$
of $\cC$ is equivalent to the cell 
$2$-rep\-re\-sen\-ta\-ti\-on $\mathbf{C}_{\mathcal{L}}$.
\item\label{thm32.2} The Duflo involution $\mathrm{G}$  is a coalgebra 
$1$-morphism in $\cC$. The corresponding $2$-rep\-re\-sen\-ta\-ti\-on 
$\mathrm{inj}_{\underline{\ccC}}(\mathrm{G})$
of $\cC$ is equivalent to the cell $2$-rep\-re\-sen\-ta\-ti\-on 
$\mathbf{C}_{\mathcal{L}}$.
\item\label{thm32.3} The Duflo involution $\mathrm{G}$ is a Frobenius 
$1$-morphism in $\cC$. 
\end{enumerate}
\end{theorem}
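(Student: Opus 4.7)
My plan is to derive \eqref{thm32.1} and \eqref{thm32.2} as applications of Corollary \ref{cor-72} and Theorem \ref{thm7} to the cell $2$-representation $\mathbf{C}_{\mathcal{L}}$, and then to verify the Frobenius compatibility \eqref{thm32.3} separately by exploiting the fiat structure.

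The first step is to recall that, under the present hypotheses of $\mathcal{J}$-simplicity of $\cC$ and strong regularity of $\mathcal{J}$, the cell $2$-representation $\mathbf{C}_{\mathcal{L}}$ is simple transitive (this is the content of results from \cite{MM1,MM5,MM6}). Theorem \ref{thm7} then produces, for any non-zero $N\in\mathbf{C}_{\mathcal{L}}(\mathtt{i})$, a coalgebra $1$-morphism $A^{N}=\underline{\mathrm{Hom}}(N,N)$ in $\underline{\cC}$ together with an equivalence of $2$-representations $\mathbf{C}_{\mathcal{L}}\simeq\mathrm{inj}_{\underline{\ccC}}(A^{N})$. The central task is to choose $N$ so that $A^{N}\cong \mathrm{G}$ as a $1$-morphism in $\underline{\cC}$. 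I would take $N$ as an indecomposable object of $\mathbf{C}_{\mathcal{L}}(\mathtt{i})$ of the form $\mathrm{G}\,X$ for a suitable base object $X$, using the standard combinatorial description of the cell $2$-representation. The Yoneda-type isomorphism \eqref{eq2} combined with the characteristic property of the Duflo involution (namely, that in $\mathbf{C}_{\mathcal{L}}$ the space $\mathrm{Hom}(N,\mathrm{F}\,N)$ is governed by whether $\mathrm{G}$ occurs as a summand of $\mathrm{F}^{*}\circ\mathrm{G}$) then forces $A^{N}\cong\mathrm{G}$. Since $\mathrm{G}\in\cC$ lies in the injective subcategory of $\underline{\cC}$ and the embedding $\cC\hookrightarrow\underline{\cC}$ is fully faithful on injectives, the coalgebra structure supplied by Lemma \ref{lem2} transports to a genuine coalgebra structure on $\mathrm{G}$ in $\cC$; this, together with the equivalence of Theorem \ref{thm7}, yields \eqref{thm32.2}. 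Part \eqref{thm32.1} then follows by the dual argument via Corollary \ref{cor-72} and the weak involution ${}^{*}$ (compare Remark \ref{remark:from-co-to-usual}).

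For \eqref{thm32.3}, I would check that the comultiplication on $\mathrm{G}$ is a morphism of $\mathrm{G}$-bimodule $1$-morphisms. The decisive input is the self-duality of the Duflo involution, $\mathrm{G}\cong\mathrm{G}^{*}$, which in the strongly regular setting is a standard property. Through this self-duality, both the algebra and the coalgebra structures on $\mathrm{G}$ can be read off from a single family of adjunction $2$-morphisms attached to an adjoint pair $(\mathrm{F},\mathrm{F}^{*})$ with $\mathrm{F}\in\mathcal{L}$, closely paralleling Section \ref{s7.1} and Proposition \ref{prop31}. The Frobenius compatibility then reduces to the snake identities for this adjunction, essentially as in the diagrammatic check for Proposition \ref{prop31}\eqref{prop31.3}. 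I expect the principal obstacle to be exactly this final bookkeeping: although each individual step is plausible, one must carefully track the identification $\underline{\mathrm{Hom}}(N,N)\cong\mathrm{G}$ through the internal-hom construction, the abelianizations $\underline{\cC}$ and $\overline{\cC}$, and the ${}^{*}$-duality, in order to verify that the resulting algebra and coalgebra structures on $\mathrm{G}$ indeed come from the same underlying adjunction data and are therefore Frobenius-compatible.
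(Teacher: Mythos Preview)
Your strategy---apply Theorem~\ref{thm7} and Corollary~\ref{cor-72} to $\mathbf{C}_{\mathcal{L}}$ and identify the resulting (co)algebra with $\mathrm{G}$---is natural and genuinely different from the paper's route, but the argument has a real gap at its key step. The identification $A^{N}\cong\mathrm{G}$ is asserted on the basis of an imprecise ``characteristic property of the Duflo involution''. What you actually need is a natural isomorphism
\[
\mathrm{Hom}_{\mathbf{C}_{\mathcal{L}}}(N,\mathrm{F}\,N)\;\cong\;\mathrm{Hom}_{\ccC}(\mathrm{G},\mathrm{F})
\quad\text{for all }\mathrm{F}\in\cC,
\]
and this is not a mere bookkeeping matter: the left-hand side is computed in a \emph{quotient} of a sub-$2$-representation of $\mathbf{P}_{\mathtt{i}}$, so relating it to hom-spaces in $\cC$ requires more than the summand count ``whether $\mathrm{G}$ occurs in $\mathrm{F}^{*}\circ\mathrm{G}$'' that you invoke. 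That statement is a multiplicity heuristic, not the functorial isomorphism required by Yoneda, and it is not one of the standard characterizations of the Duflo involution in \cite{MM1,MM6}.

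The paper avoids this difficulty altogether. It first reduces, via the classification of $\mathcal{J}$-simple fiat $2$-categories in \cite[Theorem~13]{MM3}, to the concrete case $\cC=\cC_A$ with $A$ basic and weakly symmetric, where $\mathrm{G}\cong Ae\otimes_{\mathbbm{k}} eA$ for a primitive idempotent $e$. The algebra, coalgebra and Frobenius structures are then supplied directly by Proposition~\ref{prop31}, and the equivalence $\mathrm{proj}_{\overline{\ccC}}(\mathrm{G})\simeq\mathbf{C}_{\mathcal{L}}$ is checked by an explicit Cartan-matrix comparison together with \cite[Theorem~4]{MM6}. In effect, to complete your approach you would still need this reduction to verify the displayed isomorphism above, so the paper's route is shorter. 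For part~\eqref{thm32.3} your sketch is especially thin: the algebra structure coming from Corollary~\ref{cor-72} and the coalgebra structure coming from Theorem~\ref{thm7} are produced through the two different abelianizations and the ${}^{*}$-equivalence between them, and there is no a~priori reason they are Frobenius-compatible; verifying this is precisely the sort of explicit check carried out in Proposition~\ref{prop31}\eqref{prop31.3}, which your outline defers.
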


\begin{proof}
We prove claim \eqref{thm32.1} and 
claim \eqref{thm32.2} will follow by duality.

Since the only relevant $1$-morphisms are those contained in 
$\mathcal{J}$, we may assume without loss of generality that $\mathcal{J}$ 
contains all indecomposable $1$-morphisms of $\cC$ that are not isomorphic to
some identity $1$-morphisms. Therefore, the classification of 
$\mathcal{J}$-simple $2$-categories in \cite[Theorem 13]{MM3} 
reduces our statement to the special case $\cC=\cC_A$, for some
weakly-symmetric finite dimensional basic algebra $A$.
($\cC_A$ is the $2$-category 
whose objects are $A_i\text{-}\mathrm{mod}$, 
with $A_i$ being the 
connected components of $A$, and whose 
morphism categories are generated by 
functors isomorphic to tensoring 
with projective $A_i$-$A_j$-bimodules. 
See e.g. \cite[Section 7.3]{MM1} for details on $\cC_A$.)
In the case $\cC=\cC_A$, the Duflo involution $\mathrm{G}$
has the form $Ae\otimes_{\mathbbm{k}}eA$, for some 
primitive idempotent $e\in A$, see e.g. \cite[Proposition 28]{MM6}. 
Therefore, existence of the algebra 
$1$-morphism structure on $\mathrm{G}$ follows immediately from 
Proposition \ref{prop31}\eqref{prop31.2}. 

It remains to prove the claim about $2$-representations.
Let $e=e_1,e_2,\dots,e_n$ be a complete list of 
pairwise orthogonal primitive idempotents of $A$. Then 
\[
\{Ae_i\otimes eA\,:\,i=1,2,\dots,n\} 
\]
is the list of all $1$-morphisms in $\mathcal{L}$, up to isomorphism.
By \cite[Section 7.3]{MM1}, the corresponding cell $2$-representation $\mathbf{C}_{\mathcal{L}}$
is equivalent to the defining action of $\cC_A$ on 
the category $A\text{-}\mathrm{proj}$ of projective $A$ modules.

Since $A$ is a self-injective and weakly symmetric algebra, there is an isomorphism 
$eA\cong \mathrm{Hom}_{\mathbbm{k}}(Ae,\mathbbm{k})$ of right $A$-modules
which we may use to identify
$eA$ and $\mathrm{Hom}_{\mathbbm{k}}(Ae,\mathbbm{k})$. Hence, we may express the unit morphism
\[
\eta\colon A\to Ae\otimes_{\mathbbm{k}}\mathrm{Hom}_{\mathbbm{k}}(Ae,\mathbbm{k})
\]
in the form $\eta(1)=\sum_i a_i\otimes a^*_i$, where $\{a_i\}$ is some basis of 
$Ae$ and $\{a_i^*\}$ is the corresponding dual 
basis of $\mathrm{Hom}_{\mathbbm{k}}(Ae,\mathbbm{k})$. 
Furthermore, the multiplication map 
\[
\big(Ae\otimes_{\mathbbm{k}}\mathrm{Hom}_{\mathbbm{k}}(Ae,\mathbbm{k})\big)\otimes_A
\big(Ae\otimes_{\mathbbm{k}}\mathrm{Hom}_{\mathbbm{k}}(Ae,\mathbbm{k})\big)\to
Ae\otimes_{\mathbbm{k}}\mathrm{Hom}_{\mathbbm{k}}(Ae,\mathbbm{k})
\]
is just given by contraction, i.e. $\big(a\otimes \varphi\big)\otimes \big(b\otimes\psi\big)\mapsto 
\varphi(b) \big(a\otimes \psi\big)$.

Each $Ae_i\otimes_{\mathbbm{k}}\mathrm{Hom}_{\mathbbm{k}}(Ae,\mathbbm{k})$, $i=1,2,\dots,n$, is naturally  
a right $\mathrm{G}$-module, and the additive 
closure of these objects is clearly 
stable under the left $\cC_A$-action. Let us denote 
the resulting $2$-representation of $\cC_A$
by $\mathbf{M}$. Further, the fact that homomorphisms in $\mathrm{mod}_{\ccC_A}(\mathrm{G})$
must commute with the $\mathrm{G}$-action 
implies that
\[
\mathrm{Hom}_{\mathrm{mod}_{\ccC_A}(\mathrm{G})} 
\big(Ae_i\otimes_{\mathbbm{k}}\mathrm{Hom}_{\mathbbm{k}}(Ae,\mathbbm{k}),
Ae_j\otimes_{\mathbbm{k}}\mathrm{Hom}_{\mathbbm{k}}(Ae,\mathbbm{k})\big)
\]
is the $e_iAe_j\otimes_{\mathbbm{k}} e$ subalgebra of 
\[
\mathrm{Hom}_{\ccC_A}  \big(Ae_i\otimes_{\mathbbm{k}}\mathrm{Hom}_{\mathbbm{k}}(Ae,\mathbbm{k}),
Ae_j\otimes_{\mathbbm{k}}\mathrm{Hom}_{\mathbbm{k}}(Ae,\mathbbm{k})\big)\cong e_iAe_j\otimes_{\mathbbm{k}}eAe.
\]

This means that the Cartan matrix of the underlying 
algebra of $\mathbf{M}$ coincides with the
Cartan matrix of $A$. Thus, \cite[Theorem 4]{MM6} implies 
that $\mathbf{M}$ is equivalent 
to $\mathbf{C}_{\mathcal{L}}$.

On the other hand, any $X\in\mathrm{mod}_{\ccC_A}(\mathrm{G})$ is a quotient of 
$XAe\otimes_{\mathbbm{k}}\mathrm{Hom}_{\mathbbm{k}}(Ae,\mathbbm{k})$, by 
the argument dual to the one
used in the last paragraph of the proof of Theorem \ref{thm7}. This implies that $\mathbf{M}$
coincides with the $2$-representation $\mathrm{proj}_{\overline{\ccC}}(\mathrm{G})$
of $\cC$ and completes the proof of claim \eqref{thm32.1}.

Claim \eqref{thm32.3} follows from the above and 
Proposition \ref{prop31}\eqref{prop31.3}.
\end{proof}

\begin{example}\label{example:MT-theory}
For the class of (co)algebra $1$-morphisms in Proposition \ref{prop31}, it is not hard 
to work out the Morita(--Takeuchi) $2$-theory from 
Section \ref{section:Morita-Takeuchi} 
explicitly. 

Suppose that $e$ and $f$ are two non-zero primitive idempotents in 
a finite dimensional self-injective algebra $A$. 
It follows from the 
proof of Theorem \ref{thm32} that the corresponding coalgebra $1$-morphisms 
\[
E_e=Ae\otimes_{\mathbb{k}} eA\quad\text{and}\quad E_f=Af\otimes_{\mathbb{k}}fA
\]
are Morita--Takeuchi equivalent. 
More explicitly, the equivalence and its inverse are given by 
\begin{gather*}
\begin{aligned}
{}_-\; \square_{E_e} \left(Ae \otimes_{\mathbb{k}} fA\right) &\colon \mathrm{comod}_{A\text{-}\mathrm{mod}\text{-}A}(E_e)
\overset{\cong}{\longrightarrow} \mathrm{comod}_{A\text{-}\mathrm{mod}\text{-}A}(E_f) \\
{}_-\; \square_{E_f} \left(Af \otimes_{\mathbb{k}} eA\right) &\colon \mathrm{comod}_{A\text{-}\mathrm{mod}\text{-}A}(E_f)
\overset{\cong}{\longrightarrow} \mathrm{comod}_{A\text{-}\mathrm{mod}\text{-}A}(E_e).
\end{aligned}
\end{gather*}
Moreover, $E_e$  and $E_f$ are also Morita equivalent algebra 
$1$-morphisms. This time, the equivalence and its inverse are given by 
\begin{gather*}
\begin{aligned}
{}_- \circ_{E_e} \left(Ae \otimes_{\mathbb{k}} fA\right) &\colon \mathrm{mod}_{A\text{-}\mathrm{mod}\text{-}A}(E_e)
\overset{\cong}{\longrightarrow} \mathrm{mod}_{A\text{-}\mathrm{mod}\text{-}A}(E_f) \\
{}_- \circ_{E_f} \left(Af \otimes_{\mathbb{k}} eA\right) &\colon \mathrm{mod}_{A\text{-}\mathrm{mod}\text{-}A}(E_f)
\overset{\cong}{\longrightarrow} \mathrm{mod}_{A\text{-}\mathrm{mod}\text{-}A}(E_e).
\end{aligned}
\end{gather*}
\end{example}

\subsection{Wall-crossings as Frobenius \texorpdfstring{$1$}{1}-morphism}\label{s7.4}

The constructions described in the previous subsections admit a
generalization as follows. 

Let $A$ be a finite dimensional unital 
$\mathbbm{k}$-algebra
and $B$ be a unital subalgebra of $A$. Assume that 
$A$ is projective, both as a left
and as a right $B$-module, and that both algebras, $A$ 
and $B$, are symmetric in the sense that 
there exist bimodule isomorphisms 
${}_AA_A\cong {}_A\mathrm{Hom}_{\mathbbm{k}}(A,\mathbbm{k})_A$
and ${}_BB_B\cong {}_B\mathrm{Hom}_{\mathbbm{k}}(B,\mathbbm{k})_B$. 
Then the induction and 
restriction functors 
\[
{}_AA\otimes_B{}_-\colon B\text{-}\mathrm{mod}\to  A\text{-}\mathrm{mod}\quad\text{ and }\quad
{}_BA\otimes_A{}_-\colon A\text{-}\mathrm{mod}\to  B\text{-}\mathrm{mod}
\]
are biadjoint, cf. \cite[Section 6.4]{MM6}. Consequently, 
similarly to Proposition \ref{prop31}, the $A$-$A$-bimodule 
${}_AA\otimes_B A_A$ has the 
structure of a Frobenius $1$-morphism.

The above can be applied to the following 
situation: Let $(W,S)$ be a finite Coxeter
system with a fixed reflection representation 
$\mathfrak{h}$ of $W$ and  
$\mathtt{C}$ the corresponding coinvariant algebra. 
Let $\cS=\cS(W,S,\mathfrak{h})$
denote the associated $2$-category of {\em Soergel $\mathtt{C}$-$\mathtt{C}$-bimodules},
see \cite{So1,So2,EW}. Then the indecomposable Soergel 
bimodules are naturally indexed
by elements in $W$ and, for $w\in W$, we denote by $B_w$ 
the corresponding 
indecomposable Soergel bimodule.

For $X\subset S$, let $W_X$ be the 
corresponding parabolic 
subgroup of $W$. Denote by $w_0^X$ the 
longest element in $W_X$.

\begin{proposition}\label{prop37}
The $\mathtt{C}$-$\mathtt{C}$-bimodule 
$B_{w_0^X}$ has the natural  
structure of a Frobenius $1$-morphism in $\cS$.
\end{proposition}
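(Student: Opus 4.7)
The plan is to reduce the statement to the general setup described immediately before Proposition \ref{prop37}, applied to the pair of algebras $(A,B) = (\mathtt{C}, \mathtt{C}^X)$, where $\mathtt{C}^X = \mathtt{C}^{W_X}$ denotes the subalgebra of $W_X$-invariants in the coinvariant algebra $\mathtt{C}$. The key input will be the well-known identification, due to Soergel, of the indecomposable Soergel bimodule $B_{w_0^X}$ with the induction-restriction bimodule $\mathtt{C} \otimes_{\mathtt{C}^X} \mathtt{C}$ (up to a grading shift which does not affect the underlying ungraded algebraic structure). See \cite{So1, So2}; this is essentially the original motivation for Soergel's definition of the bimodule $B_{w_0^X}$ in terms of the longest element of the parabolic subgroup.

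First, I would verify that the pair $(\mathtt{C}, \mathtt{C}^X)$ satisfies the hypotheses required in Section \ref{s7.4}. Namely, I need (i) that $\mathtt{C}$ is projective (in fact, free of rank $|W_X|$) both as a left and as a right $\mathtt{C}^X$-module, and (ii) that both $\mathtt{C}$ and $\mathtt{C}^X$ are symmetric $\mathbbm{k}$-algebras. Point (i) is the classical Chevalley-type freeness result applied to the parabolic subgroup $W_X \subseteq W$: the coinvariant algebra $\mathtt{C}$ is free over the ring of $W_X$-coinvariants identified with $\mathtt{C}^X$. Point (ii) follows from the fact that both $\mathtt{C}$ and $\mathtt{C}^X$ are graded commutative Poincaré duality algebras (via their top-degree pairings), hence Frobenius; any commutative Frobenius algebra is automatically symmetric since the left and right regular bimodule structures coincide.

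With (i) and (ii) verified, the induction and restriction functors along the inclusion $\mathtt{C}^X \hookrightarrow \mathtt{C}$ form a biadjoint pair between $\mathtt{C}^X\text{-}\mathrm{mod}$ and $\mathtt{C}\text{-}\mathrm{mod}$, and the discussion at the start of Section \ref{s7.4} then equips the bimodule $\mathtt{C} \otimes_{\mathtt{C}^X} \mathtt{C}$ with the structure of a Frobenius $1$-morphism in the bicategory of $\mathtt{C}$-$\mathtt{C}$-bimodules, precisely via the units and counits of these two adjunctions (exactly as in Proposition \ref{prop31}\eqref{prop31.3}, but with the role of $\mathbbm{k}$ played by $\mathtt{C}^X$). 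Transporting this structure along the isomorphism $\mathtt{C} \otimes_{\mathtt{C}^X} \mathtt{C} \cong B_{w_0^X}$ (possibly after applying a grading shift) yields the claimed Frobenius structure on $B_{w_0^X}$ inside $\cS$.

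The main technical point to justify carefully will be the identification of $B_{w_0^X}$ with $\mathtt{C} \otimes_{\mathtt{C}^X} \mathtt{C}$ as bimodules in $\cS$, including the compatibility with the grading conventions used in the definition of the indecomposable Soergel bimodules. The Frobenius axioms themselves come for free from the biadjointness and the Frobenius condition is then just a restatement of the standard triangle identities, so these are not the obstacle; the content is entirely in recognising $B_{w_0^X}$ as arising from the parabolic induction-restriction pair. As noted in Remark \ref{remark:graded2}, the constructions of Section \ref{section:Morita-Takeuchi} (and, analogously, Section \ref{s7.4}) extend to the graded setting, so the grading shift needed in the identification $B_{w_0^X} \cong \mathtt{C} \otimes_{\mathtt{C}^X} \mathtt{C}$ causes no essential problem.
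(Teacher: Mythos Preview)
Your proposal is correct and follows essentially the same approach as the paper's own proof: verify that $\mathtt{C}$ and $\mathtt{C}^X$ are symmetric with $\mathtt{C}$ projective over $\mathtt{C}^X$, identify $B_{w_0^X}$ with $\mathtt{C}\otimes_{\mathtt{C}^X}\mathtt{C}$ via Soergel's results, and then invoke the general biadjunction framework of Section~\ref{s7.4}. The paper's proof is more terse (citing \cite{Hi} for the symmetry/projectivity and \cite[Section~3.4]{So1} for the bimodule identification), but the strategy is identical.
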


\begin{proof}
Let $\mathtt{C}^X$ denote the algebra 
of $X$-invariants in $\mathtt{C}$. Then
both, $\mathtt{C}$ and $\mathtt{C}^X$, are 
symmetric algebras and $\mathtt{C}$
is projective as a left and as a right $\mathtt{C}^X$-module, 
see e.g. \cite{Hi}. Moreover,
the $\mathtt{C}$-$\mathtt{C}$-bimodule $B_{w_0^X}$ is isomorphic to
$\mathtt{C}\otimes_{\mathtt{C}^X}\mathtt{C}$, 
see e.g. \cite[Section 3.4]{So1} which also
admits a straightforward 
generalization to finite Coxeter groups. 
The claim follows from the discussion in Section \ref{s7.2}.
\end{proof}

\begin{remark}\label{remark:graded3}
As in Remarks \ref{remark:graded} and \ref{remark:graded2}, 
the results above generalizes without 
difficulties to the graded world.
\end{remark}

\section{Application to Soergel bimodules for dihedral groups}\label{s8}

In this section we work over $\mathbbm{k}=\mathbb{C}$. 
Moreover, we fix a positive integer $n>2$.

\subsection{Various \texorpdfstring{$2$}{2}-categories: from affine \texorpdfstring{$\mathfrak{sl}_2$}{sl2} to singular Soergel bimodules}\label{s8.1}

We denote by $\cA_n$ the category of representations of the affine Lie algebra 
$\widehat{\mathfrak{sl}_2}$ at level $n-2$, see \cite{Ka} and 
also \cite[Section 6]{Os}
and references therein for details. The category $\cA_n$ has 
the structure of a (non-strict) monoidal category via the so-called
{\em fusion product}, see e.g. \cite[Section 2.11]{Fi}. 
We can therefore consider $\cA_n$ as a bicategory.
Abusing notation, we will also denote by 
$\cA_n$ the strictification of the latter bicategory.

For a primitive complex $2n$-th root of unity $q$, consider 
also the semisimple subquotient
$\cQ_n$ of the category of integrable representations of 
the quantum group 
$\mathrm{U}_q(\mathfrak{sl}_2)$, see e.g. \cite{GK,An,AP}. 
The category $\cQ_n$ can be seen as 
the additive closure of simple finite dimensional (highest weight) 
modules $L_k$ of quantum dimension $[k+1]_q$, where $k=0,1,\dots,n-2$. 
Again, $\cQ_n$ has the structure of a (non-strict) monoidal 
category given by the quantum {\em fusion product}. 
Abusing notation, we will also denote by 
$\cQ_n$ the strictification of the latter bicategory. 

According to \cite{Fi}, the 
$2$-categories $\cQ_n$ and $\cA_n$ are 
equivalent. This equivalence is based on 
the results of \cite{KL1,KL2,KL3,KL4,Lu}.

Moreover, $\cQ_n$ also has a 
diagrammatic presentation, because it is equivalent to the Karoubi envelope 
of the one-object $2$-category of all Temperley-Lieb diagrams 
modulo the $2$-ideal of the negligible ones (see e.g. \cite[Section XII.7]{Tu}). 
Elias \cite[Section 4]{El} uses a closely related 
$2$-category, denoted $\widetilde{\cT\cL_n}(\delta)$ 
and called the \textit{two-color Temperley-Lieb $2$-category}, 
with the two objects (``colors'') $s$ and $t$. 
In $\cT\cL_n$, the regions of the Temperley-Lieb 
diagrams are colored by $s$ or $t$, such that any 
two regions separated by one strand have different colors. This 
ensures that the coloring of any diagram is 
uniquely determined by the color of its rightmost region. Vice versa, 
we can extend any color of the rightmost 
region of given diagram uniquely to a coloring of the whole diagram. 
Finally, let $\cT\cL_n$ denote the quotient 
of $\widetilde{\cT\cL_n}(q+q^{-1})$ by 
the $2$-ideal generated
by the negligible Jones-Wenzl projector corresponding to our 
choice of $n$, see
\cite[Section 4]{El} for details. (Here $q$ is the same 
root of unity as before.)

Consider also the bicategory 
$\widetilde{\cS\cS}_n$ of {\em singular Soergel bimodules} 
(over the polynomial algebra) for the dihedral 
group
\[
D_{2n}=\langle s,t\mid s^2=t^2=1, \underbrace{sts\cdots}_{n\text{ factors}}=\underbrace{tst\cdots}_{n\text{ factors}}\rangle
\]
of order $2n$, and denote by ${\cS\cS}_n$ 
the quotient of $\widetilde{\cS\cS}_n$ by the $2$-ideal 
generated by all $D_{2n}$-invariant 
polynomials of positive degree. Then ${\cS\cS}_n$ is the bicategory of singular 
Soergel bimodules over 
the coinvariant algebra $\mathtt{C}$ of $D_{2n}$. 
We refer to \cite[Section 6.1]{El} 
and \cite{Wi} for details. (We also use a notation similar to that in Section \ref{s7.4}.)
We will again denote the corresponding 
strictifications by the same symbols.

We denote by $\widehat{{\cS\cS}_n}$ 
the {\em small quotient} of
${\cS\cS}_n$, that is the 
quotient of ${\cS\cS}_n$ by the $2$-ideal 
generated by the
indecomposable Soergel bimodule corresponding to the longest 
element in $D_{2n}$.
Further, we denote by $\cS_n$ the endomorphism $2$-category of the regular 
object in ${\cS\cS}_n$,
that is the $2$-category of (regular or usual) Soergel 
bimodules and similarly for 
$\widehat{{\cS}_n}$. Again, we denote their corresponding 
strictifications by the same symbols.

\begin{remark}\label{remark:same-2-reps}
It follows from Corollary \ref{cor11} that there is a bijection 
between simple transitive and faithful $2$-representations 
of $\widehat{{\cS}_n}$ and
$\widehat{{\cS\cS}_n}$. Note that Corollary \ref{cor11} is 
applicable due to the fact that 
the identity $1$-morphism on the $s$-singular object of 
$\widehat{{\cS\cS}_n}$ factors through 
$\widehat{{\cS}_n}$, which implies that every faithful 
$2$-representation of $\widehat{{\cS\cS}_n}$
must be supported, in particular, on this $s$-singular object. 
(Similarly for $t$.)
\end{remark}

\subsection{Based modules and algebra \texorpdfstring{$1$}{1}-morphisms for affine \texorpdfstring{$\mathfrak{sl}_2$}{sl2}}\label{s8.2}

The papers \cite{DZ} and \cite{EK} study 
and classify so-called indecomposable \textit{based} 
or \textit{$\mathbb{Z}_{+}$ modules}
over the split Grothendieck group 
$[\cA_n]_{\oplus}$ of $\cA_n$
(also called the {\em Verlinde algebra}). 
Such modules 
turn out to be in one-to-one correspondence 
with finite Dynkin diagrams 
and so-called tadpole diagrams which have Coxeter 
number $n$.

\begin{theorem}\label{thm57}(\cite[Theorem 6.1]{Os}, see also \cite{BEK}.)
For each simply laced Dynkin diagram $\Gamma$ with Coxeter number $n$, there is a unique,
up to isomorphism, algebra $1$-morphism $A_{\Gamma}$ in $\cA_{n}$ such that the based 
$[\cA_n]_{\oplus}$-module $[\mathrm{proj}_{\ccA_{n}}(A_{\Gamma})]_{\oplus}$ corresponds
to $\Gamma$ via the equivalence in \cite{EK}.
\end{theorem}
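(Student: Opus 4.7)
The plan is to combine the Morita-theoretic machinery of Sections \ref{s5} and \ref{section:Morita-Takeuchi} with the classification of based $[\cA_n]_{\oplus}$-modules from \cite{EK}. A convenient feature of the present setting is that $\cA_n$ (equivalently $\cQ_n$) is semisimple as a modular tensor category, so every morphism admits kernels and cokernels as direct summands. Consequently the projective and injective abelianizations $\overline{\cA_n}$ and $\underline{\cA_n}$ are both equivalent to $\cA_n$ itself, and the notions of projective $A$-module and arbitrary $A$-module $1$-morphism coincide. In particular, any (co)algebra $1$-morphism produced by our machinery automatically lives in $\cA_n$.

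For existence, I would first produce, for each simply laced Dynkin diagram $\Gamma$ of Coxeter number $n$, a simple transitive $2$-representation $\mathbf{M}_{\Gamma}$ of $\cA_n$ whose decategorification is the Etingof--Khovanov based module labelled by $\Gamma$. For type $A_{n-1}$ this is the regular $2$-representation of $\cA_n$; for types $D$ and $E$ one can appeal to the explicit module category constructions of \cite{KO}, which match the based modules in question via \cite{EK}. With $\mathbf{M}_{\Gamma}$ in hand, pick any non-zero generator $N\in\mathbf{M}_{\Gamma}(\mathtt{i})$ and apply Corollary \ref{cor-72} to obtain an algebra $1$-morphism $A_N$ in $\overline{\cA_n}\simeq \cA_n$ together with an equivalence $\mathrm{proj}_{\ccA_n}(A_N)\simeq \mathbf{M}_{\Gamma}$ of $2$-representations of $\cA_n$. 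Setting $A_{\Gamma}:=A_N$, the corresponding decategorified equivalence produces exactly the desired identification of based modules.

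For uniqueness, suppose $A$ and $A'$ are two algebra $1$-morphisms in $\cA_n$ whose classes $[\mathrm{proj}_{\ccA_n}(A)]_{\oplus}$ and $[\mathrm{proj}_{\ccA_n}(A')]_{\oplus}$ both realise the based module of $\Gamma$. The Etingof--Khovanov classification together with the rigidity of the underlying indecomposable based module forces an equivalence $\mathrm{proj}_{\ccA_n}(A)\simeq \mathrm{proj}_{\ccA_n}(A')$ of $2$-representations of $\cA_n$. Theorem \ref{thm:Mor-Tak2} then supplies biprojective bimodule $1$-morphisms $M={}_AM_{A'}$ and $N={}_{A'}N_A$ together with compatible $2$-isomorphisms $M\circ_{A'}N\cong A$ and $N\circ_A M\cong A'$. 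Combining this with Corollary \ref{cor:Mor-Tak} and semisimplicity of $\cA_n$ (which makes the cotensor/tensor products over $A$ and $A'$ strict enough to identify the two algebras directly via a choice of isomorphism of generators) refines Morita equivalence to an honest isomorphism $A\cong A'$ in $\cA_n$.

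The main obstacle is the existence step for type $E$: decategorification pins down only the underlying $1$-morphism of $A_{\Gamma}$ and the idempotent it projects to in the fusion ring $[\cA_n]_{\oplus}$, as emphasised in Remark \ref{remark:upshots}. For types $A$ and $D$ this data essentially already fixes the algebra structure, but for $\Gamma\in\{E_6,E_7,E_8\}$ one genuinely needs extra input to verify that a compatible multiplication (equivalently, the corresponding module category) exists at all. The only routes I know are the conformal embedding / coset constructions underlying \cite{KO} and the diagrammatic presentation of $\cQ_n$ via the two-colour Temperley--Lieb category $\widetilde{\cT\cL_n}$ from \cite{El}, either of which provides the missing categorification.
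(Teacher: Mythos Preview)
The paper does not prove Theorem~\ref{thm57}; it is quoted as a known result from \cite{Os} (and \cite{BEK}) and used as input for Theorem~\ref{thm58}. So there is no ``paper's own proof'' to compare against, and your proposal should be read as an attempt to reprove Ostrik's theorem using the machinery of this paper. As such, it has two genuine gaps.

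\textbf{Existence is circular.} For types $D$ and $E$ you invoke the module categories constructed in \cite{KO} and then feed them into Corollary~\ref{cor-72}. But the construction of those module categories in \cite{KO} \emph{is} the construction of the algebra objects $A_\Gamma$ (that paper builds the module categories precisely by exhibiting the commutative algebra objects and taking modules over them). So you are citing the result you want to prove. You acknowledge this at the end for type $E$, but the same circularity is already present for type $D$. If you want an honest existence proof you must either reproduce the explicit multiplication maps or, as in \cite{Os}, import the relevant constructions from conformal field theory.

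\textbf{Uniqueness does not follow from your Morita argument.} There are two separate problems. First, the sentence ``the Etingof--Khovanov classification \ldots\ forces an equivalence $\mathrm{proj}_{\ccA_n}(A)\simeq\mathrm{proj}_{\ccA_n}(A')$ of $2$-representations'' is unjustified: \cite{EK} classifies \emph{based modules over the Grothendieck ring}, not $2$-representations. Two $2$-representations with isomorphic decategorifications need not be equivalent, and establishing that they are in this case is precisely the content of the uniqueness statement in \cite[Theorem~6.1]{Os}. Second, even granting an equivalence of module categories, Theorem~\ref{thm:Mor-Tak2} only gives Morita equivalence of $A$ and $A'$, and Morita equivalent algebra $1$-morphisms are not isomorphic in general, not even in a semisimple category. (Indeed, Corollary~\ref{cor:Mor-Tak} says exactly that different choices of generator $N$ produce Morita--Takeuchi equivalent, not isomorphic, coalgebras.) Your appeal to ``semisimplicity making cotensor/tensor products strict enough'' is not an argument; one really needs the case-by-case analysis carried out in \cite{KO,Os} to pin down the algebra up to isomorphism.
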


We stress
that for the non-simply laced finite 
Dynkin diagrams and the tadpole diagrams appearing 
in \cite[Section 3.3]{EK} the corresponding algebra 
$1$-morphisms do not exist.

\subsection{Algebra \texorpdfstring{$1$}{1}-morphisms for Soergel bimodules}\label{s8.3}

Certain 
indecomposable 
ba\-sed modules over
the split Grothendieck group $[\widehat{{\cS}_n}]_{\oplus}$ are 
in one-to-one correspondence 
with Dynkin diagrams of ADE type and Coxeter number $n$, 
as it was shown in \cite[Sections 6 and 7]{KMMZ}.
There are corresponding algebra $1$-morphisms:

\begin{theorem}\label{thm58}
For each simply laced Dynkin diagram $\Gamma$ with Coxeter number $n$, there is an 
algebra $1$-morphism $B_{\Gamma}$ in $\widehat{{\cS}_{n}}$ 
such that the based 
$[\widehat{{\cS}_n}]_{\oplus}$-module 
$[\mathrm{proj}_{\widehat{{\ccS}_{n}}}(B_{\Gamma})]_{\oplus}$ 
corresponds to $\Gamma$ via the bijection 
in \cite[Sections 6 and 7]{KMMZ}.
\end{theorem}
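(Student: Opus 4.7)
The plan is to construct each $B_\Gamma$ by transporting the algebra $1$-morphisms $A_\Gamma$ of Theorem~\ref{thm57} from $\cA_n\simeq\cQ_n$ across Elias's $2$-functor of \cite{El}, and then passing to the endomorphism $2$-category of the regular object of $\widehat{\cS\cS_n}$, which is precisely $\widehat{\cS_n}$.

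Concretely, I would first observe that Elias's $2$-functor $\Phi$ from $\cQ_n$ (equivalently $\cA_n$) to $\widehat{\cS\cS_n}$ is multiplicative in the appropriate sense, so it sends algebra $1$-morphisms to algebra $1$-morphisms. Applying $\Phi$ to the algebra $1$-morphisms $A_\Gamma$ supplied by Theorem~\ref{thm57} produces algebra $1$-morphisms $\Phi(A_\Gamma)$ in $\widehat{\cS\cS_n}$. Using Corollary~\ref{cor11} to pass between $\widehat{\cS\cS_n}$ and $\widehat{\cS_n}$, together with Remark~\ref{remark:same-2-reps}, one extracts from $\Phi(A_\Gamma)$ an algebra $1$-morphism $B_\Gamma$ in $\widehat{\cS_n}$ whose associated $2$-representation is compatible with the restriction of the one attached to $\Phi(A_\Gamma)$.

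Next, I would verify that the based module $[\mathrm{proj}_{\widehat{\ccS_n}}(B_\Gamma)]_\oplus$ corresponds to $\Gamma$ under the bijection of \cite[Sections 6 and 7]{KMMZ}. By Corollary~\ref{cor-72}, $\mathrm{proj}_{\widehat{\ccS_n}}(B_\Gamma)$ carries a transitive $2$-representation of $\widehat{\cS_n}$, and $\Phi$ induces a morphism of based modules between $[\mathrm{proj}_{\ccA_n}(A_\Gamma)]_\oplus$ (which corresponds to $\Gamma$ by Theorem~\ref{thm57}) and $[\mathrm{proj}_{\widehat{\ccS_n}}(B_\Gamma)]_\oplus$. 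Comparing with the explicit classification of \cite[Sections 6 and 7]{KMMZ}, which shows that the indecomposable based $[\widehat{\cS_n}]_\oplus$-modules with the relevant dimension data are classified by simply laced Dynkin diagrams of Coxeter number $n$, yields the desired correspondence.

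The main obstacle will be the careful analysis of how Elias's $2$-functor interacts with the $2$-representation-theoretic machinery of Section~\ref{s5} and with the KMMZ bijection at the decategorified level, particularly the extraction step that outputs $B_\Gamma$ as a $1$-morphism in $\widehat{\cS_n}$ itself rather than merely in its abelianization (cf.\ Remark~\ref{remark13}). For type A this can be cross-checked directly via Theorem~\ref{thm32} applied to the appropriate Duflo involution, yielding the cell $2$-representation. For type D the explicit constructions of \cite{KMMZ} provide an independent verification. The three exceptional type E cases ($n=12,18,30$) are the most delicate and rely crucially on the $2$-representations constructed in \cite{MT}, which themselves rely on Elias's presentation; the philosophy articulated in Remark~\ref{remark:upshots} applies, so checking the existence of the required algebra structure on the decategorified-idempotent $1$-morphism should essentially pin it down.
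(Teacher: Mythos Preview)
Your overall strategy matches the paper's: transport $A_\Gamma$ through Elias's $2$-functor to obtain an algebra $1$-morphism $B_\Gamma^s$ in $\widehat{\cS\cS_n}$, then descend to $\widehat{\cS_n}$. However, there is a genuine gap at the ``extraction step'' that you flag but do not resolve.

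The image $\Phi(A_\Gamma)=B_\Gamma^s$ lives in the endomorphism $2$-category of the \emph{$s$-singular} object of $\widehat{\cS\cS_n}$, not of the regular object. Corollary~\ref{cor11} and Remark~\ref{remark:same-2-reps} give you a bijection on equivalence classes of simple transitive $2$-representations, but to recover an algebra $1$-morphism at the regular object you would have to go through Corollary~\ref{cor-72}, which only produces an algebra $1$-morphism in the \emph{projective abelianization} $\overline{\widehat{\cS_n}}$. Remark~\ref{remark13} explicitly says it is not known in general whether such an object lifts to $\widehat{\cS_n}$ itself, so appealing to the abstract machinery is not enough. The paper instead constructs $B_\Gamma$ by hand as the induced object
\[
B_\Gamma \;=\; \mathtt{C}\otimes_{\mathtt{C}^s} B_\Gamma^s \otimes_{\mathtt{C}^s}\mathtt{C},
\]
defines the multiplication using the Demazure operator $\partial_s\colon \mathtt{C}\to\mathtt{C}^s$ as the adjunction counit in the middle, defines the unit via the standard coevaluation $\mathtt{C}\to\mathtt{C}\otimes_{\mathtt{C}^s}\mathtt{C}$, and then verifies associativity and unitality by explicit diagram chases in the $FGH$ formalism (translation out of and onto the $s$-wall). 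This is the substantive work you are missing, and it is precisely what guarantees $B_\Gamma$ lies in $\widehat{\cS_n}$ rather than only in its abelianization.

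Two smaller points. First, your combinatorial verification is too soft: the paper's argument hinges on the composite $2$-functor being \emph{full on degree-zero $2$-morphisms} (Elias's result), which forces indecomposable projectives to remain indecomposable under transport and hence preserves the based-module structure on the nose. Second, your closing remark about type~$E$ is off: by Remark~\ref{remark:typeADE-2}, the type~$E$ objects $B_\Gamma$ do \emph{not} decategorify to idempotents in $[\widehat{\cS_n}]_\oplus$, so that heuristic does not apply there.
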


\begin{proof} 
First consider the algebra $1$-morphism $B_{\Gamma}^s$ given by 
the image of $A_{\Gamma}$ from Theorem \ref{thm57} via 
the composition of the following $2$-functors:

\begin{itemize}
\item Finkelberg's equivalence between the two $2$-categories $\cA_{n}$ and $\cQ_n$;

\item the fully-faithful embedding of the $2$-category
$\cQ_n$ into $\cT\cL_n$ given by 
\[
\underbrace{L_1\otimes\dots\otimes L_1}_{k}
\mapsto
\underbrace{\dots\hspace*{-.21cm}\phantom{L_1}\hspace*{-.21cm} ststs}_{k},
\quad k=0,1,\dots,n-2,
\]
with the usual assignments on morphisms, where $L_1$ denotes the vector representation 
of $\mathrm{U}_q(\mathfrak{sl}_2)$ (which monoidally generates
$\cQ_n$);

\item the faithful $2$-functor from the $2$-category $\cT\cL_n$ to $\widetilde{{\cS\cS}_n}$ 
defined by Elias in \cite[Proposition 1.2 and Theorem 6.29]{El}, which is also 
full onto degree-zero $2$-morphisms, composed with the projection onto $\widehat{{\cS\cS}_n}$.

\end{itemize}

Notice that the second functor is the 
composite of the equivalence between $\cQ_n$ and the quotient of 
the Temperley-Lieb $2$-category, mentioned in Section \ref{s8.1}, and the fully-faithful 
embedding of the latter into $\cT\cL_n$ defined by coloring 
the right-most region of every diagram by $s$. (Alternatively, one could 
color it by $t$, of course.)

Let us check that the indecomposable based 
$[\widehat{{\cS\cS}_n}]_{\oplus}$-module 
$[\mathrm{proj}_{\widehat{{\ccS}_{n}}}(B_{\Gamma}^s)]_{\oplus}$ 
has the correct combinatorics. 

To this end, we recall that all involved $2$-categories 
have a natural positive grading
and all involved $2$-functors are gradable. 
(Note that our whole setup is applicable so far, 
cf. Remarks \ref{remark:graded}, \ref{remark:graded2} 
and \ref{remark:graded3}.) The correctness 
of the combinatorics in question follows if we can show that the (graded version of) the above 
composition is full on $2$-morphisms of degree zero.
Indeed, if the pushforward $Y$ of an indecomposable projective $X$ in 
$\mathrm{proj}_{\ccA_{n}}(A_{\Gamma})$ were 
decomposable, that would mean existence of 
a non-trivial idempotent in the endomorphism ring of $Y$, 
considered as a $1$-morphism in 
$\mathrm{proj}_{\widehat{{\ccS}_{n}}}(B_{\Gamma}^s)$. Next, from fullness on degree zero $2$-morphism,
we would get a non-trivial idempotent in the endomorphism ring 
of $X$, considered as a
$1$-morphism of $\cA_{n}$. However, due to faithfulness of the above 
composition, this idempotent will also
live in $\mathrm{proj}_{\ccA_{n}}(A_{\Gamma})$, a contradiction.

Hence, it remains to check fullness of the three $2$-functors 
from above on $2$-morphisms of degree zero.
For the first $2$-functor of the above composition the claim 
about fullness on $2$-morphisms of degree zero is clear. For 
the last $2$-functor of the composition
this is contained in \cite[Proposition 1.2]{El}. For the 
second $2$-functor, such a claim follows
directly from the definitions.

By construction and the properties of Elias'
$2$-functor from \cite[Proposition 1.2]{El}, $B_{\Gamma}^s$ lives in the 
endomorphism $2$-category of 
the $s$-singular object in $\widehat{{\cS}\cS_{n}}$.
Let  
\[
B_{\Gamma}= \mathtt{C}\otimes_{\mathtt{C}^s}  B_{\Gamma}^s \otimes_{\mathtt{C}^s}\mathtt{C},
\]
where, as above, $\mathtt{C}$ is the 
coinvariant algebra of $D_{2n}$ and $\mathtt{C}^s$ 
the subalgebra which is the quotient 
of the subalgebra of $s$-invariant polynomials. Note that 
\[
B_{\Gamma}\otimes_\mathtt{C}B_{\Gamma} \cong  \mathtt{C}\otimes_{\mathtt{C}^s}  B_{\Gamma}^s \otimes_{\mathtt{C}^s}\mathtt{C} \otimes_{\mathtt{C}^s}  B_{\Gamma}^s \otimes_{\mathtt{C}^s}\mathtt{C}
\]
maps to 
\[
\mathtt{C}\otimes_{\mathtt{C}^s}  B_{\Gamma}^s \otimes_{\mathtt{C}^s}  B_{\Gamma}^s \otimes_{\mathtt{C}^s}\mathtt{C},
\]
by applying the adjunction morphism given by the Demazure operator
$\partial_s\colon \mathtt{C}\to \mathtt{C}^s$ 
(see \cite[Section 3.6]{El} 
for the definition) to the tensor factor in the middle (we are omitting gradings, for 
simplicity). Composing this map with the multiplication morphism of $B_{\Gamma}^s$ gives the multiplicative structure on 
$B_{\Gamma}$. 

The unital structure $\mathtt{C}\to  B_{\Gamma}$ is obtained from the unital structure 
$\mathtt{C}^s\to B_{\Gamma}^s$, by 
tensoring on both sides with $\mathtt{C}$ over $\mathtt{C}^s$ and precomposing with usual adjunction 
bimodule map $\mathtt{C}\to \mathtt{C}\otimes_{\mathtt{C}^s}\mathtt{C}$ 
given by $1\mapsto
{}^{1}\! / \!{}_{2}(\alpha_s \otimes 1 + 1\otimes \alpha_s)$, where $\alpha_s$ is the simple root 
corresponding to the reflection $s$ (see e.g. \cite[Example 3.4]{El}). 

To prove associativity of multiplication for $B_{\Gamma}$, consider the following diagram:
\[
\xymatrix@R1.3cm@C.3cm{
&&FGHFGHFGH\ar[dr]|{\somespace\mathrm{id}_{FGHFG}\circ_0\varepsilon\circ_0\mathrm{id}_{GH}}
\ar[dl]|{\somespace\mathrm{id}_{FG}\circ_0\varepsilon\circ_0\mathrm{id}_{GHFGH}}&&\\
&FGGHFGH\ar[dl]|{\somespace\mathrm{id}_{F}\circ_0\mu\circ_0\mathrm{id}_{HFGH}}
\ar[dr]|-{\somespace\mathrm{id}_{FGG}\circ_0\varepsilon\circ_0\mathrm{id}_{GH}}&&
FGHFGGH\ar[dl]|-{\somespace\mathrm{id}_{FG}\circ_0\varepsilon\circ_0\mathrm{id}_{GGH}}
\ar[dr]|{\somespace\mathrm{id}_{FGHF}\circ_0\mu\circ_0\mathrm{id}_{H}}&\\
FGHFGH\ar[dr]|{\somespace\mathrm{id}_{FG}\circ_0\varepsilon\circ_0\mathrm{id}_{GH}}
&&FGGGH\ar[dl]|-{\somespace\mathrm{id}_{F}\circ_0\mu\circ_0\mathrm{id}_{GH}}
\ar[dr]|-{\somespace\mathrm{id}_{FG}\circ_0\mu\circ_0\mathrm{id}_{H}}
&&FGHFGH\ar[dl]|{\somespace\mathrm{id}_{FG}\circ_0\varepsilon\circ_0\mathrm{id}_{GH}}\\
&FGGH\ar[dr]|{\somespace\mathrm{id}_{F}\circ_0\mu\circ_0\mathrm{id}_{H}}
&&FGGH\ar[dl]|{\somespace\mathrm{id}_{F}\circ_0\mu\circ_0\mathrm{id}_{H}}&\\
&&FGH&&
}
\]
Here $G$ stands for the algebra $1$-morphism
$B_{\Gamma}^s$ and $\mu\colon GG\to G$ for the corresponding multiplication $2$-morphism. Further,
$F$ and $H$ denote translations out of the $s$-wall (i.e. 
${}_{\mathtt{C}}\mathtt{C}_{\mathtt{C}^s} \otimes_{\mathtt{C}^s} {}_-$) and to the 
$s$-wall (i.e. ${}_{\mathtt{C}^s}\mathtt{C}_{\mathtt{C}} \otimes_{\mathtt{C}}{}_-$), 
respectively, with $\varepsilon\colon HF\to\mathrm{Id}$ being the counit of the adjunction (given by $\partial_s$). The top rhombus and the
two rhombi on the sides commute by the interchange law. The bottom rhombus commutes due to 
associativity of $\mu$. Therefore the whole diagram commutes which yields associativity of 
multiplication for $B_{\Gamma}$. 

To prove unitality of $B_{\Gamma}$ consider the diagram
\[
\xymatrix@R1.25cm@C1.25cm{ 
FGH\ar@{=}[rrd]\ar[d]_{\eta\circ_0\mathrm{id}_{FGH}}&&\\
FHFGH\ar[d]_{\mathrm{id}_{F}\circ_0 u\circ_0\mathrm{id}_{HFGH}}
\ar[rr]_{\mathrm{id}_{F}\circ_0 \varepsilon\circ_0\mathrm{id}_{GH}}
&&FGH\ar@{=}[rrd]\ar[d]_{\mathrm{id}_{F}\circ_0 u\circ_0\mathrm{id}_{GH}}&\\
FGHFGH\ar[rr]_{\mathrm{id}_{FG}\circ_0 \varepsilon\circ_0\mathrm{id}_{GH}}&&
FGGH\ar[rr]_{\mathrm{id}_{F}\circ_0 \mu\circ_0\mathrm{id}_{H}}&&FGH
}
\]
with the same notation as above and, additionally, 
where $u\colon\mathrm{Id}\to G$ denotes the unit morphism for $G$ and
$\eta\colon\mathrm{Id}\to FH$ is the unit of the adjunction.
Here the top triangle commutes by adjunction, the square commutes by the interchange law and
the right triangle commutes due to unitality of $B_{\Gamma}^s$. Therefore the whole
diagram commutes and proves unitality of $B_{\Gamma}$. Hence, $B_{\Gamma}$ is indeed an
algebra $1$-morphism.

Since we have $\mathtt{C}\cong\mathtt{C}^s\{1\} \oplus\mathtt{C}^s\{-1\}$ as $\mathtt{C}^s$-$\mathtt{C}^s$-bimodules, the restriction of $B_{\Gamma}$ to $\mathtt{C}^s$ (on both sides) is isomorphic 
to $B_{\Gamma}^s\{2\} \oplus B_{\Gamma}^s\oplus B_{\Gamma}^s\oplus B_{\Gamma}^s\{-2\}$
as an algebra $1$-morphism. This 
implies that the simple transitive $2$-representation of $\widehat{{\cS}_{n}}$ corresponding to 
the algebra $1$-morphism $B_{\Gamma}$ can 
be obtained from the one of $\widehat{{\cS\cS}_n}$ corresponding 
to $B_{\Gamma}^s$ by restriction as in Corollary \ref{cor11}. 
\end{proof}

Thanks to Theorem \ref{thm7} and the 
construction in \cite[Section 3.6]{MM6}, there are always
many non-equivalent $2$-representations of $\widehat{{\cS}_{n}}$ 
whose Grothendieck groups 
give rise to the same based $[\widehat{{\cS}_n}]_{\oplus}$-module. 
(These are obtained by an inflation process from the 
ones constructed via Theorem \ref{thm58}.) 
Therefore one cannot expect any
direct uniqueness statement in Theorem \ref{thm58} similar to the 
one of Theorem \ref{thm57}.
However, we expect that under some additional assumptions of simplicity
together with specification of $\mathtt{t}\in\cC$ identifying the
endomorphism $2$-category in which the algebra $1$-morphism lives, the 
algebra $1$-morphism in the formulation 
of Theorem \ref{thm58} should be unique (in which case it, most 
probably, will not be the one 
constructed in the proof but rather a ``simple quotient'' of the latter). 
See also \cite[Theorem II]{MT}.

\subsection{Some concluding remarks}\label{remark:typeADE}

\begin{remark}\label{remark:typeADE-1}
If $\Gamma$ is of Dynkin type $A$, then 
the corresponding simple transitive $2$-representation of 
$\widehat{{\cS}_n}$ is equivalent to a cell 
$2$-representation (in the sense of Section \ref{s2.1}), 
as was shown in \cite[Sections 6 and 7]{KMMZ}. 
Furthermore, by the results in \cite[Table 1]{KO} and Theorem \ref{thm58}, 
we have $B_\Gamma=B_s$. The algebra structure on $B_s$ is given by the 
usual degree zero bimodule map $B_s \otimes_{\mathtt{C}} B_s\to B_s\{1\}$ 
of the Soergel calculus (the one which corresponds to the 
``merge'' in Elias' two-color Soergel calculus, cf. \cite[Section 5.3]{El}).

If $\Gamma$ is of Dynkin type $D$, then 
the simple transitive 
$2$-representation of $\widehat{{\cS}_n}$ 
can be constructed from a 
cell $2$-representation using the orbits under an 
involution. 
This was shown in \cite[Section 7]{KMMZ}. By the 
results in \cite[Table 1]{KO} and Theorem \ref{thm58}, we have 
$B_\Gamma=B_s\oplus B_{s w_0}$. (Here $w_0$ 
denotes the longest word 
in the dihedral group $D_{2n}$.)
The $2$-morphisms which define the algebra 
structure on $B_{\Gamma}$, can 
be deduced from the results in \cite[Section 7]{KO} and the 
relation between Temperley-Lieb diagrams and 
Elias' two-color Soergel calculus. It is not 
hard to check by hand that these induce an algebra 
structure on $B_{\Gamma}$. (See also Remark \ref{remark:typeADE-2}.)

If $\Gamma$ is of Dynkin type $E$, the situation 
is different. 
The existence of 
the corresponding simple transitive $2$-representation 
of $\widehat{{\cS}_n}$ was predicted 
in \cite[Section 7.5]{KMMZ} and its construction was given
in \cite{MT} using 
Elias' diagrammatic two-color Soergel calculus. 
From the results in \cite[Table 1]{KO} and 
Theorem \ref{thm58}, we obtain the decomposition of 
$B_{\Gamma}$ into indecomposable Soergel bimodules. However, the 
$2$-morphisms which define the algebra structure on $B_{\Gamma}$ are only 
determined up to scalars. Fixing these scalars is 
hard. In \cite[Theorem 6.1]{KO} 
and \cite[Theorem 6.1]{Os} it is shown that this is possible in a 
roundabout way, using arguments from conformal field theory. We were 
unable to prove the analogous result in $\widehat{{\cS}_n}$ directly 
and by hand due to the complexity of the diagrams involved. It would be 
interesting to have such a proof. 
\end{remark}

\begin{remark}\label{remark:typeADE-2}
We stress another conceptual 
difference between the $B_\Gamma$ for $\Gamma$ of Dynkin types $A$ and $D$ 
on one side, and those for $\Gamma$ of Dynkin type $E$ 
on the other: Using the description 
of the multiplication of Kazhdan--Lusztig basis 
elements in $[\widehat{{\cS}_n}]_{\oplus}$ (see e.g. \cite[Section 4]{dC}), 
it is not hard to see that the $B_{\Gamma}$ for types $A$ and $D$ 
descend to (pseudo-)idempotents in $[\widehat{{\cS}_n}]_{\oplus}$.
However, this is not true for the $B_{\Gamma}$ of type $E$.
\end{remark}



M.M.: Center for Mathematical Analysis, Geometry, and Dynamical Systems, Departamento de Matem{\'a}tica, 
Instituto Superior T{\'e}cnico, 1049-001 Lisboa, PORTUGAL \& Departamento de Matem{\'a}tica, FCT, 
Universidade do Algarve, Campus de Gambelas, 8005-139 Faro, PORTUGAL, email: {\tt mmackaay\symbol{64}ualg.pt}

Vo.Ma.: Department of Mathematics, Uppsala University, Box. 480,
SE-75106, Uppsala, SWEDEN, email: {\tt mazor\symbol{64}math.uu.se}

Va.Mi.: School of Mathematics, University of East Anglia,
Norwich NR4 7TJ, UK, email: {\tt v.miemietz\symbol{64}uea.ac.uk}

D.T.: Mathematisches Institut, Universit{\"a}t Bonn, Endenicher Allee 60, Room 1.003,
D-53115 Bonn, GERMANY, email: {\tt dtubben\symbol{64}math.uni-bonn.de}

\end{document}